\documentclass[12pt]{amsart}
\usepackage[colorlinks=true, pdfstartview=FitV, linkcolor=blue, citecolor=blue, urlcolor=blue, breaklinks=true]{hyperref}

\usepackage{amsmath,amsfonts,amssymb,amsthm,amscd,comment,euscript,stmaryrd}
\usepackage[usenames]{color}
\usepackage[all]{xy}
\usepackage{fancyhdr}
\usepackage{booktabs}
\usepackage{paralist}

%%%%%%%%%%%%%%%%%%%%%%%%%%%%%%%%%%%
%
% Margin Settings
%
%%%%%%%%%%%%%%%%%%%%%%%%%%%%%%%%%%%

\leftmargin=0in
\topmargin=0pt
\headheight=0pt
\oddsidemargin=0in
\evensidemargin=0in
\textheight=8.75in
\textwidth=6.5in
\parindent=0.5cm
\headsep=0.25in
\widowpenalty=1000

%%%%%%%%%%%%%%%%%%%%%%%%%%%%%%%%%%%
%
% Math Definitions
%
%%%%%%%%%%%%%%%%%%%%%%%%%%%%%%%%%%%

%%%% Rings and coefficients

\newcommand\Z{\mathbb{Z}}      % integers
\newcommand\N{\mathbb{N}}      % natural numbers
\newcommand\Q{\mathbb{Q}}      % Rational numbers
\newcommand\C{\mathbb{C}}      % Complex numbers
\newcommand\LL{\mathbb{L}}     % Lazard ring
\newcommand\Lw{\Lambda}        % weight lattice/Abelian group

%%%% Algebras

\newcommand\bH{\mathbf{H}}      % Hecke algebra
\newcommand{\bD}{\mathbf{D}}   % Demazure algebra
\newcommand\FA{R\llbracket\Lw\rrbracket_F}  % Formal group algebra
\newcommand{\hh}{\mathtt{h}}      % oriented cohomology

%%%% Operators

\DeclareMathOperator{\End}{End}
\DeclareMathOperator{\Spec}{Spec}

%%%% Misc

\newcommand{\ch}{c^\hh}             % characteristic class
\newcommand{\cc}{\mathfrak{c}}   % characteristic map
\newcommand{\fgl}{{FGL}}         % formal group law
\newcommand{\CH}{\mathrm{CH}}    % Chow groups
\newcommand{\iDelta}{\mathit{\Delta}} % Demazure elements

\newcommand{\ts}{\textstyle}

%%%%%%%%%%%%%%%%%%%%%%%%%%%%%%%%%%%
%
% Theorem Environments
%
%%%%%%%%%%%%%%%%%%%%%%%%%%%%%%%%%%%

\theoremstyle{plain}
\newtheorem{theo}{Theorem}[section]
\newtheorem{prop}[theo]{Proposition}
\newtheorem{lem}[theo]{Lemma}

\theoremstyle{definition}

\newtheorem{defin}[theo]{Definition}
\newtheorem{rem}[theo]{Remark}

\newtheorem{example}[theo]{Example}

\numberwithin{equation}{section}

%%%%%%%%%%%%%%%%%%%%%%%%%%%%%%%%%%%
%
% Commenting commands
%
%%%%%%%%%%%%%%%%%%%%%%%%%%%%%%%%%%%
%
% The comments environment created here creates a centered
% text box with increased margins and footnote sized text.
%
% Use command below to include comments
%

%
% Use command below to omit comments
%
%\newcommand{\comments}[1]{}

%%%%%%%%%%%%%%%%%%%%%%%%%%%%%%%%%%%
%
\begin{document}
%
%%%%%%%%%%%%%%%%%%%%%%%%%%%%%%%%%%%

\title[Formal Hecke algebras and algebraic oriented cohomology theories]{Formal Hecke algebras \\ and algebraic oriented cohomology theories}

\author{Alex Hoffnung}
%\address{Department of Mathematics and Statistics, University of Ottawa}
\author{Jos\'e Malag\'on-L\'opez}
%\address{Department of Mathematics and Statistics, University of Ottawa}
\author{Alistair Savage}
\address{Department of Mathematics and Statistics, University of Ottawa}
%\email{alistair.savage@uottawa.ca}
\author{Kirill Zainoulline}
%\address{Department of Mathematics and Statistics, University of Ottawa}

\thanks{The work of the second two authors was supported by Discovery Grants from the Natural Sciences and Engineering Research Council of Canada.  The first two authors were supported by the Discovery Grants of the last two.  The first author was also partially supported by funds from the Centre de Recherches Math\'ematiques and the last author was also supported by an Early Researcher Award from the Government of Ontario.}

\subjclass[2010]{20C08, 14F43}

\keywords{Hecke algebra, oriented cohomology, formal group law, Demazure operator}

%\date{\today}

\begin{abstract}
  In the present paper we generalize the construction of the nil Hecke ring of Kostant-Kumar to the context of an arbitrary formal group law, in particular, to an arbitrary algebraic oriented cohomology theory of Levine-Morel and Panin-Smirnov (e.g.\ to Chow groups, Grothendieck's $K_0$, connective $K$-theory, elliptic cohomology, and algebraic cobordism).  The resulting object, which we call a \emph{formal (affine) Demazure algebra}, is parameterized by a one-dimensional commutative formal group law and has the following important property: specialization to the additive and multiplicative periodic formal group laws yields completions of the nil Hecke and the 0-Hecke rings respectively.  We also introduce a \emph{formal (affine) Hecke algebra}. We show that the specialization of the formal (affine) Hecke algebra to the additive and multiplicative periodic formal group laws gives completions of the degenerate (affine) Hecke algebra and the usual (affine) Hecke algebra respectively.  We show that all formal affine Demazure algebras (and all formal affine Hecke algebras) become isomorphic over certain coefficient rings, proving an analogue of a result of Lusztig.
\end{abstract}

\maketitle \thispagestyle{empty}

\vspace{-0.5cm}

\tableofcontents

\vspace{-0.5cm}

\textbf{Note:} This version of the paper incorporates an erratum to the published version.  The original published version contained a few sign errors in Proposition~\ref{prop:FDA-braid-relations}\eqref{prop-item:Delta-G2-adjacent}.  We thank Marc-Antoine Leclerc for bringing this to our attention.

%%%%%%%%%%%%%%%%%%%%%%%%%%%%%%%%%%%%%%%%%%%%%%%%%%%%%%%%%%%%%%%%%%%
%
\section{Introduction}
%
%%%%%%%%%%%%%%%%%%%%%%%%%%%%%%%%%%%%%%%%%%%%%%%%%%%%%%%%%%%%%%%%%%%

Geometric realizations of representations of algebras such as quantized enveloping algebras of Lie algebras and Hecke-type algebras have proved to be an exceptionally interesting and useful tool in both representation theory and geometry.  In particular, the field of geometric representation theory has produced such results as the proof of the Kazhdan-Lusztig conjecture and the construction of canonical bases in quantized enveloping algebras.  Geometric realizations are also often a precursor to \emph{categorification}, a current topic of great interest.

Two fundamental constructions in geometric representation theory are of particular relevance to the current paper.  The first arises from so-called \emph{push-pull operators} (coming from the projection from the flag variety $G/B$ to the quotient $G/P$ of $G$ by a minimal parabolic) on the singular cohomology or $K$-theory (i.e.\ Grothendieck's $K_0$) of the flag variety.  If one works with singular cohomology, these operators generate the nil Hecke algebra.  (When we use the term ``nil Hecke algebra'' here, we do not include the polynomial part.)  If one works instead with $K$-theory, the push-pull operators generate the $0$-Hecke algebra (the specialization of the Hecke algebra at $q=0$).  Adding the operators corresponding to multiplication by elements of the singular cohomology or $K$-theory, one obtains the affine analogues of the algebras above.

The above-mentioned algebras can also be realized in a more algebraic manner.  Let $W$ be the Weyl group of a reduced root system, acting on the weight lattice $\Lambda$. In \cite{KoKu86}, Kostant and Kumar introduced a \emph{twisted group algebra} $Q_W$, which is the smash product of the group ring $\Z[W]$ and the field of fractions $Q$ of the polynomial ring $S$ in $\Lambda$.  Then they defined a subring $R$ of $Q_W$ generated by \emph{Demazure elements} and elements of $S$ and showed that $R$ is similar to the $0$-Hecke algebra: it satisfies the classical braid relation, but a nilpotence relation instead of an idempotence one.  For this reason, they called $R$ the \emph{nil Hecke algebra}.  Following this approach, Evens and Bressler in \cite{EvBr87} introduced the notion of a \emph{generalized Hecke ring} (where the nilpotence/idempotence relation is replaced by a general quadratic one) which includes both $0$-Hecke and nil Hecke algebras as examples.  The Demazure elements play the role of a Hecke basis and have several geometric interpretations (as Demazure operators and push-pull operators) on the singular cohomology of the variety of Borel subgroups associated to the root system.

The second geometric construction relevant to the current paper is the realization of Hecke-type algebras via the geometry of the Steinberg variety.  There is a natural structure of an algebra on the (co)homology of the Steinberg variety via convolution.  Again, the resulting algebra depends on the choice of (co)homology theory.  Equivariant $K$-theory yields the affine Hecke algebra, equivariant singular cohomology yields the degenerate affine Hecke algebra, and top degree Borel-Moore homology yields the group algebra of the Weyl group.  We refer the reader to \cite{CG10,Ginzburg98} and the references therein for further details.

The idea of the current paper is based on the observation that, for the most part, the above-mentioned constructions use only a few properties of $K$-theory and singular cohomology.  These properties can be
summarized by the notion of an algebraic oriented cohomology theory (AOCT).  Roughly speaking, such a theory is a functor $\hh$ endowed with characteristic classes which satisfies the projective bundle formula (see
\cite{LM07}).  They can be classified using the theory of \emph{formal group laws} (FGLs), with the link being provided by the Quillen formula expressing the first characteristic class $\ch_1$ of a tensor product of two line bundles,
\[
  \ch_1(L_1 \otimes L_2)=F(\ch_1(L_1),\ch_1(L_2)),
\]
where $F$ is the one-dimensional commutative FGL associated to $\hh$.  In particular, $K$-theory corresponds to the so-called \emph{multiplicative} FGL $F(u,v)=u+v-uv$ and singular cohomology to the \emph{additive} FGL $F(u,v)=u+v$.  (Note that this correspondence does not, in general, work the other way---there are examples of FGLs that do not correspond to AOCTs.)  Therefore, it is natural to ask whether one can extend the Kostant-Kumar construction of the nil Hecke algebra and the convolution construction of the affine Hecke algebra to the setting of an arbitrary FGL $F$ (and, in particular, for AOCTs).  In the present paper, we provide an affirmative answer to the first question.  We also define algebras that we believe should be related to the more general convolution algebras of the second question.

Given an FGL $F$ (say, corresponding to some AOCT), we introduce the notion of a twisted formal group algebra $Q_W^F$.  To do this, we replace the polynomial ring $S$ of \cite[\S4]{KoKu86} by the formal group algebra associated to $F$.  We then define the \emph{formal Demazure element} to be the expression in $Q_W^F$ corresponding to the formal Demazure operator.  One of our key objects is the algebra generated by the formal Demazure elements and the elements of the formal group algebra.  We call this the \emph{formal affine Demazure algebra} and denote it $\bD_F$.  The subalgebra generated by only the formal Demazure elements is called the \emph{formal Demazure algebra}.  Next, we modify these algebras by introducing an infinite cyclic group.  Geometrically, this corresponds to introducing $\C^*$-actions on the relevant varieties.  We call the resulting algebra the \emph{formal (affine) Hecke algebra} associated to the FGL.  Specializing to the additive and multiplicative periodic FGLs, which correspond to (equivariant) singular cohomology and $K$-theory respectively, we recover (completions of) all of the algebras mentioned above.  This is summarized in the following table.

\medskip

\begin{center}
  \begin{tabular}{lll}
    & Additive \fgl & Multiplicative \fgl \\
    \cmidrule(rl){2-2} \cmidrule(rl){3-3} Alg.\ Oriented Cohom.\ Theory & (Equiv.) singular cohomology & (Equiv.) $K$-theory \\
    Formal Demazure alg.\ & Nil Hecke alg.\ & 0-Hecke alg.\ \\
    Formal affine Demazure alg.\ & Affine nil Hecke alg.\ & Affine 0-Hecke alg.\ \\
    Formal Hecke alg.\ & Group alg.\ of the Weyl Group & Hecke alg.\ \\
    Formal affine Hecke alg.\ & Degenerate affine Hecke alg.\ & Affine Hecke alg.\ \\
  \end{tabular}
\end{center}

\medskip

We see that $\bD_F$ shares many properties with affine Hecke algebras.  However, it does \emph{not} always satisfy the braid relations.  In general, the braid relations are satisfied only up to lower order terms (see Proposition~\ref{prop:FDA-braid-relations}).  This reflects the fact that formal Demazure operators for a general AOCT depend on a choice of reduced decomposition of an element of the Weyl group.  Elliptic versions of the affine Hecke algebras have been studied from the topological point of view by Ginzburg-Kapranov-Vasserot in \cite{GKV95,GKV97}.  However, that setting did not seem to be amenable to explicit computations (contrary to the algebraic setting of the current paper).

Our construction provides two things.  First, it gives a uniform presentation of the fundamental algebras appearing in both the push-pull and Steinberg variety constructions.  Second, it generalizes to other formal groups laws and algebraic oriented cohomology theories, yielding new algebras in the process.  These new algebras should be thought of as natural generalizations of the Hecke-type algebras appearing in the table above.  Given the representation theoretic importance of these Hecke-type algebras, we expect the new algebras defined here to be of interest to both geometers and representation theorists.  In fact, in the subsequent paper \cite{CZZ09}, the authors have related the dual of the formal affine Demazure algebra defined in the current paper to the equivariant oriented cohomology of the flag variety and to the invariants of the formal group algebra under the action of the Weyl group.  Since Hecke-type algebras have played crucial roles in the categorification of quantum groups and related algebras, it is also natural to ask if the generalizations defined in the current paper can be used as building blocks in more general categorifications.

This paper is organized as follows.  In the first four sections, we recall basic definitions and facts used in the rest of the paper. We review the definition of a formal group law and the exponential map in Section~\ref{sec:formal-group-rings}.  In Section~\ref{sec:FGAs}, we recall the definition and basic properties of formal group rings/algebras following \cite[\S2]{CPZ09}. In Section~\ref{sec:formal-dem-ops}, following \cite[\S3]{CPZ09}, we recall the definition and basic properties of formal Demazure operators.  Section \ref{sec:AOCT} is devoted to algebraic oriented cohomology theories.  We define the formal (affine) Demazure algebras and prove various facts about them in Sections~\ref{sec:FDAs} and~\ref{sec:FDAs-egs}.  In particular, we describe them in terms of generators and relations.  We also show that they are all isomorphic over certain coefficient rings.  In Section~\ref{sec:FHAs}, we define the formal (affine) Hecke algebras and describe them in terms of generators and relations.  We prove various properties about them in Section~\ref{sec:FHAs-egs}.  In particular, we show that they are all isomorphic over certain coefficient rings, an analogue of a result of Lusztig (\cite[Thm.~9.3]{Lus89}).

\medskip

\paragraph{\textbf{Acknowledgements}} The authors would like to thank Sam Evens, Iain Gordon, Anthony Licata and Erhard Neher for useful discussions.  They would also like to thank Changlong Zhong for sharing with them some of his computations.

%%%%%%%%%%%%%%%%%%%%%%%%%%%%%%%%%%%%%%%%%%%%%%%%%%%%%%%%%%%%%%%%%%%
%
\section{Formal group laws} \label{sec:formal-group-rings}
%
%%%%%%%%%%%%%%%%%%%%%%%%%%%%%%%%%%%%%%%%%%%%%%%%%%%%%%%%%%%%%%%%%%%

In the present section, we recall the definition and properties of formal group laws (see \cite[Ch.~1,~\S3, Ch.~III,~\S1]{Fro68} and \cite[Ch.~1 and~2]{LM07} for details).

\begin{defin}[Formal group law]
  A one-dimensional commutative \emph{formal group law} (\fgl) is a pair $(R,F)$, where $R$ is a commutative ring, called the \emph{coefficient ring}, and $F=F(u,v) \in R\llbracket u,v \rrbracket$ is a power series satisfying the following axioms:
  \begin{itemize}
    \item[(FG1)] $F(u,0) = F(0,u) = u \in R\llbracket u \rrbracket$,
    \item[(FG2)] $F(u,v)= F(v,u)$, and
    \item[(FG3)] $F(u,F(v,w)) = F(F(u,v),w) \in R\llbracket u,v,w\rrbracket $.
  \end{itemize}
\end{defin}

Note that axioms~(FG1) and~(FG2) imply that
\begin{equation}\label{equ-fgl} \ts
  F(u,v) = u + v + \sum_{i,j \ge 1} a_{ij} u^i v^j,\;\text{ where }a_{ij}=a_{ji}\in R.
\end{equation}
Given an integer $m\ge 1$ we use the notation
\begin{gather*}
  u +_F v :=F(u,v),\quad
  m \cdot_F u :=\underbrace{u+_F \dotsb +_F u}_{m \text{ times}}, \quad
  \text{and }
  (-m)\cdot_F u :=-_F(m\cdot_F u),
\end{gather*}
where $-_F u$ denotes the {\em formal inverse} of $u$, i.e.\ the unique power series in $R\llbracket u\rrbracket $ such that $u +_F(-_F u) = (-_F u)+_F u=0$ (see \cite[Ch.~1, \S3, Prop.~1]{Fro68}).  We define
\begin{equation} \label{eq:mu-series}
  \mu_F(u) := \frac{-_F u}{-u} = 1-a_{11}u+a_{11}^2u^2-(a_{11}^3+a_{12}a_{11}-a_{22}+2a_{13})u^3+\dotsb
\end{equation}
(see \cite[(2.7)]{LM07}).  Note that $\mu_F(u)$ has a multiplicative inverse since its constant term is invertible.

Throughout the current paper, whenever a particular \fgl\ is denoted using a subscript (e.g.\ $F_A$, $F_M$, $F_L$, $F_U$), we will use the same subscript to denote various quantities associated
to that \fgl. Thus, we will write $-_A u$ for $-_{F_A} u$, $\mu_L$ for $\mu_{F_L}$, etc.

\begin{example}\label{FGL:muexamples}
  \begin{asparaenum}
    \item For the \emph{additive} \fgl\ $(\Z,F_A(u,v)=u+v)$ we have (see \cite[Example~1.1.4]{LM07})
      \[
        -_A u=-u \quad \text{and} \quad \mu_A(u) = 1.
      \]

    \item \label{example-item:multiplicative-FGL} For the
      \emph{multiplicative} \fgl\ $(R,F_M(u,v)=u+v-\beta uv)$,
      $\beta\in R$, $\beta\neq 0$, we have (see \cite[Example~1.1.5]{LM07})
      \begin{gather*} \ts
        -_M u=-u\sum_{i\ge 0}\beta^i u^i \quad \text{and} \quad \mu_M(u)=\sum_{i\ge 0}\beta^i u^i.
      \end{gather*}
      Observe that $(1-\beta u) \mu_M(u)=1$, so $\mu_M(u)^{-1}=1-\beta u$ in $R\llbracket u\rrbracket $.  If $\beta \in R^\times$, where $R^\times$ denotes the group of invertible elements of $R$, we say that the \fgl\ is \emph{multiplicative periodic}.

   \item The \emph{Lorentz} \fgl\ $(R,F_L)$ is given by
     \[ \ts
       F_L (u,v) = \frac{u+v}{1+\beta uv} = (u+v)\sum_{i \geq 0} (-\beta uv)^i,\quad
       \beta \in R,\ \beta \ne 0.
     \]
     We have $-_L u = -u$ and $\mu_L(u) = 1$.  Note that for $\beta = 1/c^2$, where $c$ is the speed of light, the expression $F_L(u,v)$ corresponds to the addition of relativistic parallel velocities.

  \item \label{eg-item:elliptic-FGL} Let $E$ be the elliptic curve defined by the Tate model (\cite[\S3]{Tate}):
    \begin{equation} \label{eq:elliptic-curve}
      E: \quad v = u^3 + a_1 uv + a_2 u^2 v + a_3 v^2 + a_4 u v^2 + a_6 v^3.
    \end{equation}
    Here the coefficient ring is $R=\Z[a_1,a_2,a_3,a_4,a_6]$.  The group law on $E$ induces an \emph{elliptic} \fgl\ $(R,F_E)$ with
    \[
      F_E (u, v) = u + v - a_1 u v - a_2 (u^2 v + uv^2) - 2 a_3 (u^3 v + u v^3) + (a_1 a_2 - 3 a_3) u^2 v^2 + O(5)
    \]
    (see \cite[Appendix~1, (3.6)]{Lan87}).
\begin{comment}
  The reference \cite[\S IV.1,p.~120]{Sil09}) has a sign error in front of the $2a_3$.
\end{comment}
    We have
    \[ \ts
      -_Eu = \frac{-u}{1 - a_1 u - a_3 v(u)},\quad \mu_E(u) = \frac{1}{1-a_1u-a_3v(u)}
    \]
    (see \cite[\S IV.1,p.~120]{Sil09}), where $v(u)$ is considered as an element in $R\llbracket u\rrbracket$ after a recursive procedure in the Tate model.

  \item \label{eg-item:UFGL} We define the \emph{Lazard ring} $\LL$ to be the commutative ring with generators $a_{ij}$, $i,j \in \N_+$, and subject to the relations that are forced by the axioms for formal group laws.  The corresponding \fgl\ $(\LL,F_U(u,v)=u+v+\sum_{i,j\ge 1}a_{ij}u^iv^j)$ is then called the \emph{universal} \fgl\ (see \cite[\S1.1]{LM07}).  The series $\mu_U(u)$ is given by~\eqref{eq:mu-series}.
  \end{asparaenum}
\end{example}

Let $(R,F)$ and $(R,F')$ be formal group laws.  A \emph{morphism of formal group laws} $f\colon(R,F) \to (R,F')$ is a formal power series $f \in R\llbracket u\rrbracket $ such that $f(u+_F v)=f(u) +_{F'} f(v)$.  Given a \fgl\ $F$ over $R$, there is an isomorphism of {\fgl}s after tensoring with $\Q$,
\[
  e_F\colon (R_\Q,F_A)\to (R_\Q, F),\; R_\Q=R\otimes_\Z \Q,
\]
given by the \emph{exponential} series $e_F(u)\in R_\Q\llbracket u\rrbracket$ which satisfies the property
$e_F(u+v)=e_F(u) +_F e_F(v)$ (see \cite[Ch.~IV, \S1]{Fro68}).

\begin{example} \label{eg:exp-series}
  \begin{enumerate}
    \item \label{eg-item:exp-general} For a general \fgl\ $F(u,v)=u+v+a_{11}uv+a_{12}(u^2v+uv^2)+O(4)$ we have
        \[ \ts
          e_F(u)=u+\frac{a_{11}}{2!} u^2+\frac{a_{11}^2+2a_{12}}{3!}u^3 + O(4).
        \]

    \item For the multiplicative \fgl\ we have
        \[ \ts
          e_M(u)=\sum_{i\ge 1} (-\beta)^{i-1}\frac{u^i}{i!}, \quad \text{so that} \quad \beta e_M(u)=1-\exp(-\beta u).
        \]

    \item For the Lorentz \fgl\ we have $e_L(u) = \frac{e^{2u}-1}{e^{2u}+1}$.

    \item For the elliptic \fgl\ we have
        \[ \ts
          e_E (u) =  u - \frac{a_1}{2!} u^2 + \left( \frac{3 a_1^2 - 2 (a_1^2 + a_2)}{3!} \right) u^3 + O(4).
        \]
  \end{enumerate}
\end{example}

%%%%%%%%%%%%%%%%%%%%%%%%%%%%%%%%%%%%%%%%%%%%%%%%%%%%%%%%%%%%%%%%%%%
%
\section{Formal group algebras} \label{sec:FGAs}
%
%%%%%%%%%%%%%%%%%%%%%%%%%%%%%%%%%%%%%%%%%%%%%%%%%%%%%%%%%%%%%%%%%%%

Following \cite[\S2]{CPZ09}, we recall the definition and basic properties of formal group algebras.  These will play a fundamental role in our definition of formal (affine) Demazure and Hecke algebras.

\begin{defin}[Formal group algebra] \label{def:FGA}
  Suppose $(R,F)$ is a \fgl\ and $\Lw$ is an abelian group.  Let $R[x_\Lw] := R[\{x_\lambda\ |\ \lambda \in \Lw\}]$ denote the polynomial ring over $R$ with variables indexed by $\Lw$.  Let $\varepsilon \colon R[x_\Lw] \to R$ be the augmentation homomorphism which maps all $x_\lambda$, $\lambda \in \Lw$, to $0$ and consider the $(\ker \varepsilon)$-adic topology on $R[x_\Lw]$.  We define $R\llbracket  x_\Lw \rrbracket $ to be the $(\ker \varepsilon)$-adic completion of the polynomial ring $R [x_\Lw]$.  In particular, if $\Lw$ is finite of order $n$, then the ring $R\llbracket  x_\Lw \rrbracket $ is the usual ring of power series in $n$ variables.

  Let $J_F$ be the closure of the ideal generated by the elements $x_0$ and $x_{\lambda_1+\lambda_2} - (x_{\lambda_1}+_F x_{\lambda_2})$ for all $\lambda_1,\lambda_2 \in \Lw$.  We define the \emph{formal group algebra} (or \emph{formal group ring}) to be the quotient (see \cite[Def.~2.4]{CPZ09})
  \[
    \FA := R \llbracket  x_\Lw \rrbracket  /J_F.
  \]
  The class of $x_\lambda$ in $\FA$ will be denoted by the same letter.  By definition, $\FA$ is a complete Hausdorff $R$-algebra with respect to the $(\ker \varepsilon)$-adic topology, where $\varepsilon\colon \FA \to R$ is the induced augmentation map.  We define the augmentation ideal $\mathcal{I}_F := \ker \varepsilon$ to be the kernel of this induced map.
\end{defin}

The assignment of the formal group algebra $\FA$ to the data $(R,F,\Lambda)$ is functorial in the following ways (see \cite[Lem.~2.6]{CPZ09}).
\begin{enumerate}
  \item Given a morphism $f\colon (R,F) \to (R,F')$ of {\fgl}s, there is an induced continuous ring homomorphism $f^\star\colon R\llbracket \Lw\rrbracket _{F'} \to \FA$, $x_\lambda\mapsto f(x_\lambda)$.  If $f'\colon (R,F') \to (R,F'')$ is another morphism of {\fgl}s, then $(f'f)^\star = f^\star (f')^\star$.

  \item Given a group homomorphism $f\colon \Lw\to \Lw'$, there is an induced continuous ring homomorphism $\widehat f\colon \FA \to R\llbracket \Lw'\rrbracket _F$, $x_{\lambda}\mapsto x_{f(\lambda)}$.  If $f'\colon \Lambda' \to \Lambda''$ is another group homomorphism, then $\widehat{f'f} = \widehat{f'} \widehat f$.
\end{enumerate}
Note that maps of the type $\hat f$ commute with maps of the type $f^\star$.

\begin{example}
  The map $x_m \mapsto m \cdot_F x$, $m \in \Z$, defines $R$-algebra isomorphisms
  \[
    R\llbracket \Z\rrbracket _F \cong R\llbracket x\rrbracket  \quad \text{and} \quad R\llbracket \Z/n\Z\rrbracket _F \cong R\llbracket x \rrbracket  /(n\cdot_F x).
  \]
  More generally, there is a (noncanonical) $R$-algebra isomorphism (see \cite[Cor.~2.12]{CPZ09})
  \[
    R\llbracket \Z^n\rrbracket _F\cong R\llbracket x_1,\dotsc,x_n\rrbracket ,
  \]
  where the right hand side is independent of $F$.  This implies that if $R$ is a domain, then
  so is $R\llbracket \Z^n\rrbracket _F$.

 It follows from \eqref{equ-fgl} that $n \cdot_F x = nx + x^2 p(x)$ for some $p(x) \in R\llbracket x\rrbracket$.  Thus, if $n \in R^{\times}$, then $n \cdot_F x$ is the product of $x$ and a unit in $R\llbracket x\rrbracket $, so $(x) = (n\cdot_F x)$ and   $R\llbracket \Z/n\Z\rrbracket _F \cong R$.
\end{example}

\begin{lem}\label{lem:murel}
  Given a \fgl\ $(R,F)$, we have $\mu_F(x_\lambda)^{-1}=\mu_F(x_{-\lambda}),\;\text{ for all }\lambda\in \Lw$.
\end{lem}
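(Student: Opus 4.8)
The plan is to reduce the identity $\mu_F(x_\lambda)^{-1}=\mu_F(x_{-\lambda})$ to the defining relation $x_{-\lambda} = -_F x_\lambda$ that holds in $\FA$. Recall from~\eqref{eq:mu-series} that $\mu_F(u) = (-_F u)/(-u)$ as an identity in $R_\Q\llbracket u\rrbracket$, or rather as an identity of power series in $R\llbracket u\rrbracket$ once one checks the quotient makes sense integrally (the numerator $-_F u$ is divisible by $u$ since $-_F u = -u + (\text{higher order})$ by~\eqref{equ-fgl}). The key point is that this is a \emph{formal power series identity}, so it can be specialized: substituting $u = x_\lambda$ we get $\mu_F(x_\lambda)\cdot(-x_\lambda) = -_F x_\lambda = x_{-\lambda}$ in $\FA$, where the last equality is precisely the relation defining $J_F$ (applied with $\lambda_1 = \lambda$, $\lambda_2 = -\lambda$, using $x_0 = 0$). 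Similarly $\mu_F(x_{-\lambda})\cdot(-x_{-\lambda}) = x_{\lambda}$.

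Next I would multiply these two relations together. From the first, $\mu_F(x_\lambda)\cdot x_\lambda = -x_{-\lambda}$, and from the second, $\mu_F(x_{-\lambda})\cdot x_{-\lambda} = -x_\lambda$. Multiplying, $\mu_F(x_\lambda)\mu_F(x_{-\lambda})\cdot x_\lambda x_{-\lambda} = x_{-\lambda} x_\lambda$, so $\bigl(\mu_F(x_\lambda)\mu_F(x_{-\lambda}) - 1\bigr)\cdot x_\lambda x_{-\lambda} = 0$ in $\FA$. If $x_\lambda x_{-\lambda}$ were a non-zero-divisor we would be done immediately, but this need not hold for arbitrary $\Lw$ (e.g.\ torsion). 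So instead I would argue via the constant-term/unit structure: $\mu_F(u)$ has invertible constant term $1$, hence $\mu_F(x_\lambda)$ and $\mu_F(x_{-\lambda})$ are units in $\FA$ (their images in the complete local-type algebra $\FA$ are $1$ modulo the augmentation ideal $\mathcal{I}_F$). Set $\nu := \mu_F(x_\lambda)\mu_F(x_{-\lambda})$, a unit with $\nu \equiv 1 \pmod{\mathcal{I}_F}$. We must show $\nu = 1$.

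To nail $\nu = 1$, I would work in the universal case and transfer. Take $(\LL, F_U)$, the universal \fgl\ from Example~\ref{eg-item:UFGL}, and the free abelian group $\Lw = \Z$ with $\lambda$ a generator, so $R\llbracket\Z\rrbracket_{F_U} \cong \LL\llbracket x\rrbracket$ is a power series ring over a domain $\LL$, hence itself a domain. There $x = x_\lambda$ and $x_{-\lambda} = -_U x$ are both non-zero, and their product is a non-zero-divisor, so the displayed relation forces $\mu_U(x)\mu_U(-_U x) = 1$ there. Since the formation of $\mu_F$ and of $\FA$ is functorial in the \fgl\ (the map $f^\star$ of item~(1) after Definition~\ref{def:FGA}, induced by the classifying morphism $(\LL, F_U) \to (R, F)$) and in $\Lw$ (the map $\widehat{f}$ of item~(2), induced by $\Z \to \Lw$, $1 \mapsto \lambda$), and since $f^\star$ and $\widehat f$ are ring homomorphisms sending $x \mapsto x_\lambda$, the identity $\mu_U(x)\mu_U(-_U x) = 1$ maps to $\mu_F(x_\lambda)\mu_F(x_{-\lambda}) = 1$ in $\FA$. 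That is exactly the claim after rearranging, $\mu_F(x_\lambda)^{-1} = \mu_F(x_{-\lambda})$.

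The main obstacle, and the only genuinely delicate point, is the zero-divisor issue: one cannot cancel $x_\lambda x_{-\lambda}$ directly in $\FA$ for general $\Lw$, so the universal/functorial detour (or, alternatively, a direct power-series computation in the single-variable universal ring $\LL\llbracket x\rrbracket$ combined with the observation that $\mu_F(u)\mu_F(-_F u)$ is a \emph{universal} power-series identity in $u$ over $\LL$, pulled back along the classifying map) is what does the real work. Everything else — the divisibility of $-_F u$ by $u$, the unit property of $\mu_F$ — is immediate from~\eqref{equ-fgl} and~\eqref{eq:mu-series}.
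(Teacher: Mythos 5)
Your proof is correct and it addresses a genuine subtlety — the possibility that $x_\lambda x_{-\lambda}$ is a zero-divisor in $\FA$ — that the paper's one-line proof glosses over. The paper simply says the lemma ``follows immediately from the fact that $-_F x_\lambda=x_{-\lambda}$ in $\FA$,'' which is best read as an implicit appeal to the one-variable power-series identity $\mu_F(u)\,\mu_F(-_F u)=1$ in $R\llbracket u\rrbracket$, followed by substitution $u\mapsto x_\lambda$.

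That said, your detour through the Lazard ring is heavier than needed. You do not have to change the coefficient ring at all: the identity $\mu_F(u)\,\mu_F(-_F u)=1$ already holds in $R\llbracket u\rrbracket$ for \emph{any} commutative $R$, because $-_F(-_F u)=u$ gives $\mu_F(-_F u)=\tfrac{u}{-(-_F u)}=\tfrac{u}{u\,\mu_F(u)}$, and $u$ is always a non-zero-divisor in the one-variable power series ring $R\llbracket u\rrbracket$ (no domain hypothesis required). Then the continuous ring homomorphism $\widehat f\colon R\llbracket \Z\rrbracket_F\cong R\llbracket u\rrbracket\to\FA$ induced by $\Z\to\Lambda$, $1\mapsto\lambda$, sends $u\mapsto x_\lambda$ and $-_F u\mapsto x_{-\lambda}$, delivering $\mu_F(x_\lambda)\mu_F(x_{-\lambda})=1$. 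This is exactly what your ``universal power-series identity'' remark at the end points toward, but it needs neither $\LL$ nor the domain property of $\LL$ — only the regularity of $u$. Incidentally, the functoriality you cite as $f^\star$ (item~(1) after Definition~\ref{def:FGA}) is for morphisms of {\fgl}s over a \emph{fixed} coefficient ring, given by a power series in $R\llbracket u\rrbracket$; the classifying map $\LL\to R$ is a base change of coefficients, which is a different (though equally standard) kind of functoriality. This mislabeling does not affect the correctness of the idea, but a careful write-up should not invoke $f^\star$ for it. The version avoiding $\LL$ sidesteps the issue entirely and uses only the $\widehat f$ functoriality that the paper actually states.
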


\begin{proof}
  This follows immediately from the fact that $-_F x_\lambda=x_{-\lambda}$ in $\FA$.
\end{proof}

We now consider what happens at a finite (truncated) level in $\FA$.  Let $R[\Lw]_F$ denote the subalgebra of $\FA$ equal to the image of $R[x_\Lw]$ under the composition $R[x_\Lw] \hookrightarrow R \llbracket x_\Lw \rrbracket  \twoheadrightarrow \FA$. Then $\FA$ is the completion of $R[\Lw]_F$ at the ideal $(\ker \varepsilon) \cap R[\Lw]_F$. As before, the assignment $(R,F,\Lambda) \mapsto R[\Lw]_F$ is functorial with respect to group homomorphisms.

\begin{example} \label{eg:FGAs}
  \begin{asparaenum}
    \item Suppose $\Lw$ is a free abelian group.  Then for the additive \fgl\ $F_A(u,v)=u+v$ over $R$ we have ring isomorphisms (cf.~\cite[Example~2.19]{CPZ09})
      \[
        R\llbracket \Lw\rrbracket _A \cong S_R^*(\Lambda)^\wedge := \prod_{i=0}^{\infty} S^i_R(\Lw)\quad \text{and}\quad R [\Lw]_A \cong S_R^*(\Lambda) := \bigoplus_{i=0}^{\infty} S^i_R(\Lw),
      \]
      where $S^i_R(\Lw)$ is the $i$-th symmetric power of $\Lw$ over $R$, and the isomorphisms are induced by
      sending $x_\lambda$ to $\lambda\in S^1_R(\Lw)$.

    \item \label{eg-item:FGA-mult} Consider the group ring
      \[ \ts
        R[\Lw]:=\left\{\sum_j r_j e^{\lambda_j}\mid r_j\in R,\; \lambda_j\in \Lw\right\}.
      \]
      Let $\varepsilon\colon R[\Lw] \to R$ be the augmentation map, i.e.\ the $R$-linear map sending all
      $e^\lambda$, $\lambda \in \Lw$, to $1$. Let $R[\Lw]^\wedge$ be the completion of $R[\Lw]$ at $\ker \varepsilon$.

      Assume that $\beta\in R^\times$.  Then for the multiplicative periodic \fgl\ $F_M(u,v)=u+v-\beta uv$ over $R$, we have $R$-algebra isomorphisms (cf.~\cite[Example~2.20]{CPZ09})
      \[
        R\llbracket \Lw\rrbracket _M \cong R[\Lw]^\wedge \quad \text{and} \quad R[\Lw]_M \cong R[\Lw]
      \]
      induced by $x_\lambda \mapsto \beta^{-1}(1- e^{-\lambda})$ and $e^\lambda\mapsto (1-\beta x_{-\lambda})=(1-\beta x_{\lambda})^{-1}$ respectively.  Using this identification, along with Example~\ref{FGL:muexamples}\eqref{eg-item:FGA-mult} and Lemma~\ref{lem:murel}, we obtain
      \[
        \mu_M(x_\lambda)\mu_M(x_{\lambda'})=(1-\beta x_{-\lambda})(1-\beta x_{-\lambda'})=e^{\lambda+\lambda'}=1-\beta x_{-\lambda-\lambda'}=\mu_M(x_{\lambda+\lambda'}).
      \]
  \end{asparaenum}
\end{example}

\begin{example} \label{eg:FGL-torus}
  Fix a generator $\gamma$ of $\Z$ and let $t=e^\gamma$ be the corresponding element in the group ring $R[ \Z]$.  According to the previous examples, we have $R$-algebra isomorphisms
  \[
    R\llbracket \Z\rrbracket _M \cong R[t,t^{-1}]^\wedge \quad \text{and} \quad R\llbracket \Z\rrbracket _A \cong R\llbracket \gamma\rrbracket,
  \]
  where $R[t,t^{-1}]^\wedge$ denotes the completion of $R[t,t^{-1}]$ at the ideal generated by $t-1$.  At the truncated levels, we have
  \[
    R[\Z]_M \cong R[t,t^{-1}] \quad \text{and} \quad R[\Z]_A \cong R[\gamma],
  \]
  given by  $x_{n\gamma} \mapsto \beta^{-1}(1-t^{-n})$ (with inverse map given by $t \mapsto 1-\beta x_{-\gamma}$) and $x_{n\gamma} \mapsto n\gamma$ respectively.
\end{example}

%%%%%%%%%%%%%%%%%%%%%%%%%%%%%%%%%%%%%%%%%%%%%%%%%%%%%%%%%%%%%%%%%%%
%
\section{Formal Demazure operators} \label{sec:formal-dem-ops}
%
%%%%%%%%%%%%%%%%%%%%%%%%%%%%%%%%%%%%%%%%%%%%%%%%%%%%%%%%%%%%%%%%%%%

In the present section, we introduce, following \cite[\S3]{CPZ09}, the notion of formal Demazure operators.  We also state some of their properties that will be needed in our constructions.  For the remainder of the paper, we assume that $R$ is a commutative domain.

Consider a reduced root system $(\Lw,\Phi,\varrho)$ as in \cite[\S 1]{Dem73}, i.e.\ a free $\Z$-module $\Lw$ of finite rank (the \emph{weight lattice}), a finite subset $\Phi$ of $\Lw$ whose elements are called \emph{roots}, and a map $\varrho \colon \Lw \to \Lw^\vee := {\rm Hom}_\Z (\Lw,\Z)$ associating a \emph{coroot} $\alpha^\vee \in \Lw^\vee$ to every root $\alpha$, satisfying certain axioms.  The \emph{reflection} map $\lambda \mapsto \lambda - \langle \alpha^\vee, \lambda \rangle \alpha$ is denoted by $s_\alpha$.  Here $\langle \cdot, \cdot \rangle$ denotes the natural pairing between $\Lw^\vee$ and $\Lw$.

The \emph{Weyl group} $W$ associated to a reduced root system is the subgroup of linear automorphisms of $\Lw$ generated by the reflections $s_\alpha$.  We fix sets of simple roots $\{\alpha_i\}_{i \in I}$ and fundamental
weights $\{\omega_i\}_{i \in I}$.  That is, $\omega_i \in \Lambda^\vee$ satisfies $\langle \omega_i, \alpha_j \rangle = \delta_{ij}$ for all $i,j \in I$.  Let $\{s_i = s_{\alpha_i}\}_{i \in I}$ denote the corresponding set of simple reflections in $W$ and let $\ell$ denote the usual length function on $W$.  We say the root system is \emph{simply laced} if $\left< \alpha_i^\vee, \alpha_j \right> \in \{0,-1\}$ for all $i,j \in I$, $i \ne j$.  For instance, the roots systems of type ADE are simply laced.  For $i,j \in I$, $i \ne j$, set $m_{ij}$ equal to 2, 3, 4 or 6 if the product $\langle \alpha_i^\vee, \alpha_j \rangle \langle \alpha_j^\vee, \alpha_i \rangle$ is equal to 0, 1, 2 or 3 respectively and set $m_{ij}=0$ if $\langle \alpha_i^\vee, \alpha_j \rangle \langle \alpha_j^\vee, \alpha_i \rangle \ge 4$.  Then $m_{ij}$ is the order of $s_i s_j$ if $m_{ij} > 0$, and $m_{ij}=0$ if and only if $s_i s_j$ has infinite order (see, for example, \cite[Prop.~3.13]{Kac90}).

Fix a \fgl\ $(R,F)$.  Since the Weyl group acts linearly on $\Lw$, it acts by $R$-algebra automorphisms on $\FA$ via the functoriality in $\Lw$ of $\FA$ (see Section~\ref{sec:FGAs}), i.e.\ we have
\[
  w(x_\lambda) = x_{w(\lambda)},\text{ for all } w \in W,\ \lambda \in \Lw.
\]

\begin{defin}[Formal Demazure operator $\Delta^F_\alpha$] \label{def:Dem-operator}
  By \cite[Cor.~3.4]{CPZ09}, for any $\varphi \in \FA$ and root $\alpha \in \Phi$,  the element $\varphi - s_\alpha (\varphi)$ is uniquely divisible by $x_{\alpha}$. We define an $R$-linear operator $\Delta_\alpha^F$ on $\FA$ (see \cite[Def.~3.5]{CPZ09}), called the \emph{formal Demazure operator}, by
  \[ \ts
    \Delta_\alpha^F( \varphi ):=\frac{\varphi - s_\alpha(\varphi)}{x_\alpha},\quad \varphi \in \FA.
  \]
  Observe that if $F$ is the additive or multiplicative \fgl,
  then $\Delta_\alpha^F$ is the classical Demazure operator of \cite[\S3 and~\S9]{Dem73}.  We will often omit the superscript $F$ when the \fgl\ is understood.
\end{defin}

\begin{defin}[$g^F$, $\kappa^F_\alpha$ and $C^F_\alpha$] \label{Def:Coper}
  Consider the power series $g^F(u,v)$ defined by $u+_F v = u+v-uvg^F(u,v)$ and, for $\alpha \in \Phi$, let
  \[ \ts
    \kappa^F_\alpha := g^F(x_{\alpha},x_{-\alpha}) = \frac{1}{x_\alpha} + \frac{1}{x_{-\alpha}} \in \FA.
  \]
  We define an $R$-linear operator $C_\alpha^F$ on $\FA$ (see \cite[Def.~3.11]{CPZ09}) by
  \[
    C_\alpha ^F(\varphi) := \kappa_\alpha^F \varphi -\Delta_{\alpha}^F(\varphi),\quad \varphi \in \FA.
  \]
  We will often omit the superscript $F$ when the \fgl\ is understood.
\end{defin}

\begin{lem} \label{lem:kappa-eq-zeo}
  The following statements are equivalent.
  \begin{enumerate}
    \item \label{lem-item:Fdivis} $F(u,v) = (u+v)h(u,v)$ for some $h(u,v) \in R \llbracket u,v \rrbracket$.
    \item \label{lem-item:kappa-zero} $\kappa^F_\alpha = 0$ for all $\alpha \in \Phi$.
    \item \label{lem-item:kappa_alpha-zero} $\kappa^F_\alpha = 0$ for some $\alpha \in \Phi$.
    \item \label{lem-item:mu-one} $\mu_F(u)=1$.
  \end{enumerate}
  If these equivalent conditions are satisfied, we write $\kappa^F=0$.  If they are not satisfied, we write $\kappa^F \ne 0$.
\end{lem}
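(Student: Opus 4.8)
The plan is to prove the cycle of implications $(a) \Rightarrow (b) \Rightarrow (c) \Rightarrow (a)$ together with a separate link to $(d)$, most economically by proving $(a) \Leftrightarrow (d)$ and $(a) \Leftrightarrow (b)$ and $(c) \Rightarrow (a)$ (the implication $(b) \Rightarrow (c)$ being trivial since $\Phi$ is nonempty). The whole lemma hinges on unwinding the definition of $g^F$ and $\kappa^F_\alpha$ in Definition~\ref{Def:Coper}, so the first thing I would do is record that $g^F(u,v)$ is precisely the power series with $F(u,v) = u + v - uv\,g^F(u,v)$, hence $F(u,v) = (u+v)h(u,v)$ for some $h \in R\llbracket u,v\rrbracket$ if and only if $uv\,g^F(u,v)$ is divisible by $u+v$ in $R\llbracket u,v\rrbracket$.

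For $(a) \Leftrightarrow (d)$: I would use the defining relation of $\mu_F$ from~\eqref{eq:mu-series}, namely $-_F u = -u\,\mu_F(u)$, combined with $F(u, -_F u) = 0$. Substituting $v = -_F u = -u\mu_F(u)$ into $F(u,v) = (u+v)h(u,v)$ shows that $u + v = u(1 - \mu_F(u))$ must vanish at this substitution in the appropriate sense; conversely, if $\mu_F(u) = 1$ then $-_F u = -u$, and I claim $F(u,v)$ is then divisible by $u+v$. The cleanest route here is to observe that $F(u,v)$ vanishes on the ``antidiagonal'' $v = -_F u = -u$, i.e.\ $F(u,-u) = 0$ in $R\llbracket u \rrbracket$, which by the Weierstrass-type division already invoked in Definition~\ref{def:Dem-operator} (the fact that $\varphi - s_\alpha(\varphi)$ is uniquely divisible by $x_\alpha$, whose algebraic content is exactly that $f(u,v) - f(v,u)$ is divisible by $u+v$ when... ) — more carefully, $F(u,v) - F(u,-u)$ is divisible by $v - (-u) = u + v$, so $F(u,v) = F(u,-u) + (u+v)h(u,v)$, and $F(u,-u) = 0$ exactly when $-u$ is the formal inverse of $u$, i.e.\ when $\mu_F(u) = 1$. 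This gives $(d) \Rightarrow (a)$, and reversing the argument (if $F = (u+v)h$ then $F(u,-u) = 0$, forcing $-_F u = -u$ by uniqueness of the formal inverse, hence $\mu_F(u) = 1$) gives $(a) \Rightarrow (d)$.

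For the passage to $\kappa^F_\alpha$: recall $\kappa^F_\alpha = g^F(x_\alpha, x_{-\alpha}) = \tfrac{1}{x_\alpha} + \tfrac{1}{x_{-\alpha}}$ in $\FA$. The identity $\tfrac1{x_\alpha} + \tfrac1{x_{-\alpha}} = \tfrac{x_\alpha + x_{-\alpha}}{x_\alpha x_{-\alpha}}$ shows $\kappa^F_\alpha = 0$ iff $x_\alpha + x_{-\alpha} = 0$ in $\FA$ (using that $x_\alpha$ is not a zero divisor, which holds since $R$ is a domain and $\FA$ embeds suitably by the Example after Lemma~\ref{lem:murel}), i.e.\ iff $x_{-\alpha} = -x_\alpha$, i.e.\ iff $-_F x_\alpha = -x_\alpha$. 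Now $x_\alpha$ generates a free variable (any single root spans a rank-one sublattice, and $R\llbracket \Z \rrbracket_F \cong R\llbracket x \rrbracket$), so $-_F x_\alpha = -x_\alpha$ in $\FA$ holds iff $-_F u = -u$ identically in $R\llbracket u \rrbracket$, iff $\mu_F(u) = 1$. This simultaneously delivers $(b) \Leftrightarrow (d)$ and $(c) \Leftrightarrow (d)$ (the argument using a single $\alpha$ is identical), completing the cycle.

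The main obstacle I anticipate is purely bookkeeping: making rigorous the step ``$\kappa^F_\alpha = 0$ in $\FA$ iff $x_\alpha + x_{-\alpha} = 0$'' and ``$-_F x_\alpha = -x_\alpha$ in $\FA$ iff $\mu_F(u) = 1$ in $R\llbracket u \rrbracket$''. Both require knowing that the natural map $R\llbracket u \rrbracket \to \FA$, $u \mapsto x_\alpha$, is injective — equivalently that $x_\alpha$ behaves like a genuine power-series variable and is not a zero divisor. This is exactly the content of the isomorphism $R\llbracket \Z \rrbracket_F \cong R\llbracket x \rrbracket$ from the Example following Lemma~\ref{lem:murel} together with functoriality of $\FA$ in $\Lambda$ applied to the inclusion $\Z \alpha \hookrightarrow \Lambda$ (which is split since $\Lambda$ is free, so the induced map on formal group algebras is a split injection). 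Once that is in hand, everything else is a direct substitution.
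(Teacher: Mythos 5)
Your overall architecture is sound and is a somewhat different route from the paper's. The paper proves $(a)\Rightarrow(b)\Rightarrow(c)\Rightarrow(a)$, $(d)\Rightarrow(b)$, $(a)\Rightarrow(d)$; the hard direction $(c)\Rightarrow(a)$ is done by expanding $F(x_\alpha,-x_\alpha)=x_\alpha^2\sum_n b_nx_\alpha^n$, killing the $b_n$ one at a time via the augmentation map and the domain property of $\FA$, and then showing each homogeneous component $F_i$ is divisible by $u+v$. You instead funnel everything through $(d)$, observing that $F(u,-u)=0$ implies $F(u,v)$ divisible by $u+v$ by the power-series remainder theorem, and that $\kappa^F_\alpha=0$ in $\FA$ is equivalent to $-_Fu=-u$ in $R\llbracket u\rrbracket$ by injectivity of $u\mapsto x_\alpha$. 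Your route is cleaner and more conceptual; the technical core (passing from an identity in $\FA$ to an identity in $R\llbracket u\rrbracket$) is the same in both.

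However, your justification for the key injectivity claim is wrong. You assert that the inclusion $\Z\alpha\hookrightarrow\Lambda$ is split ``since $\Lambda$ is free.'' A submodule of a free $\Z$-module need not be a direct summand: splitting of $\Z\alpha\hookrightarrow\Lambda$ is equivalent to $\alpha$ being \emph{primitive} in $\Lambda$, which fails already for $\mathfrak{sl}_2$ (weight lattice $\Z\omega$, root $\alpha=2\omega$, so $\Z\alpha=2\Z\omega$ is not a summand). So the map $R\llbracket\Z\rrbracket_F\to\FA$ induced by $n\mapsto n\alpha$ is \emph{not} in general a split injection, and you cannot appeal to splitting. The conclusion you need (injectivity of $R\llbracket u\rrbracket\to\FA$, $u\mapsto x_\alpha$) is nevertheless true, but the correct reason is the one the paper uses: $\FA\cong R\llbracket x_1,\dotsc,x_n\rrbracket$ is a domain because $R$ is, and $x_\alpha\in\mathcal I_F\setminus\{0\}$ has positive valuation with nonzero linear part in the associated graded ring (this is \cite[Lem.~4.2]{CPZ09}), so for any $f(u)=\sum_{k\ge m}c_ku^k$ with $c_m\neq 0$ the element $f(x_\alpha)$ has a nonzero component in degree $m$. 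You should replace the splitting argument with this valuation/domain argument. (Also a smaller slip: injectivity of the ring map $u\mapsto x_\alpha$ is \emph{not} equivalent to ``$x_\alpha$ is not a zero divisor''; the latter is automatic in a domain and does not by itself give injectivity.) Once that is repaired, the proof is correct.
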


\begin{proof}
  First suppose that $F(u,v) = (u+v)h(u,v)$ for some $h(u,v) \in R \llbracket u,v \rrbracket$.  Then, for any $\alpha \in \Phi$, we have $F(x_\alpha,-x_\alpha)=0$.  By the uniqueness of the formal inverse, this implies that $x_{-\alpha} = -_F x_\alpha = -x_\alpha$.  Thus $\kappa^F_\alpha = 0$ and so~\eqref{lem-item:Fdivis} implies~\eqref{lem-item:kappa-zero}.  Clearly~\eqref{lem-item:kappa-zero} implies~\eqref{lem-item:kappa_alpha-zero}.

  Now suppose that $\kappa^F_\alpha = 0$ for some $\alpha \in \Phi$.  Then $x_\alpha \in \mathcal{I}_F \setminus \{0\}$ and $F(x_\alpha,-x_\alpha)=0$ in $R \llbracket \Lambda \rrbracket_F$.  By the definition of a \fgl, we have
  \begin{gather*} \ts
    F(x_\alpha,-x_\alpha)=x_\alpha^2\sum_{i,j\ge 1}(-1)^j a_{ij}x_\alpha^{i+j-2}=x_\alpha^2\sum_{n\ge 0}b_n x_\alpha^n \in \mathcal{I}_F^2, \\ \ts \text{where }b_n= \sum_{i+j=n+2} (-1)^ja_{ij}.
  \end{gather*}
  We claim that $b_n=0$ for all $n\ge 0$.  Indeed, let $n_0$ be the smallest $n$ such that $b_{n_0}\neq 0$. Then
  \[ \ts
    0 = F(x_\alpha,-x_\alpha)=x_\alpha^{2}\sum_{n\ge n_0}b_n x_\alpha^n=x_\alpha^{2+n_0}\sum_{n\ge n_0}b_n x_\alpha^{n-n_0}.
  \]
  Since $x_\alpha \ne 0$ (this follows from~\cite[Lem.~4.2]{CPZ09}) and $R \llbracket \Lambda \rrbracket_F$ is a domain, we have
  \[ \ts
    \sum_{n \ge 0} b_n x_\alpha^{n-n_0} = 0.
  \]
  Applying the augmentation map, we obtain $b_{n_0}=0$, contradicting our choice of $n_0$.

  Now let $F_i(u,v)$ be the $i$-th homogeneous component of $F$, $i \ge 2$.  Since $F_i(u,-u)=b_{i-2}u^i=0$, $F_i(u,v)$ is divisible by $(u+v)$ for every $i\ge 2$.  Since the homogeneous components of degree zero and one for any \fgl\ are 0 and $u+v$, this implies that $F(u,v) = (u+v)h(u,v)$ for some $h(u,v) \in R \llbracket u,v \rrbracket$.  Thus~\eqref{lem-item:kappa_alpha-zero} implies~\eqref{lem-item:Fdivis}.

  Now, if $\mu_F(u)=1$, then $- x_\alpha = -_F x_\alpha = x_{-\alpha}$ and so $\kappa^F_\alpha = 0$ for all $\alpha \in \Phi$.  Thus~\eqref{lem-item:mu-one} implies~\eqref{lem-item:kappa-zero}.

  Finally, if $F(u,v) = (u+v)h(u,v)$ for some $h(u,v) \in R \llbracket u,v \rrbracket$, then $-_F u = -u$ by the uniqueness of the formal inverse (as above).  Thus $\mu_F(u)=1$.  Hence~\eqref{lem-item:Fdivis} implies~\eqref{lem-item:mu-one}.
\end{proof}

As in the case of the usual Demazure operators, the operators $\Delta_\alpha^F$ and $C_\alpha^F$ satisfy Leibniz-type properties (see \cite[Props.~3.8 and~3.12]{CPZ09}).

%%%%%%%%%%%%%%%%%%%%%%%%%%%%%%%%%%%%%%%%%%%%%%%%%%%%%%%%%%%%%%%%%%%%%%%%
%
\section{Algebraic oriented cohomology theories and characteristic maps} \label{sec:AOCT}
%
%%%%%%%%%%%%%%%%%%%%%%%%%%%%%%%%%%%%%%%%%%%%%%%%%%%%%%%%%%%%%%%%%%%%%%%%

While the new objects to be defined in the current paper rely, for the most part, only on a formal group law, the motivation behind these definitions is geometric.  For this reason, we now recall several facts concerning algebraic oriented cohomology theories.  We refer the reader to \cite{LM07} and \cite{Pan03} for further details and examples on algebraic oriented cohomology theories.

An \emph{algebraic oriented cohomology theory} (AOCT) is a contravariant functor $\hh$ from the category of smooth projective varieties over a field $k$ to the category of commutative unital rings which satisfies certain properties (see \cite[\S1.1]{LM07}).  Given a morphism $f\colon X\to Y$ of varieties, the map $\hh(f)$ will be denoted $f^*$ and called the \emph{pullback} of $f$.  One of the characterizing properties of $\hh$ is that, for any proper map $f \colon X\to Y$, there is an induced map $f_*\colon \hh(X) \to \hh(Y)$ of $\hh(Y)$-modules called the \emph{push-forward} (here $\hh(X)$ is an $\hh(Y)$-module via $f^*$).  A \emph{morphism} of AOCTs is a natural transformation of functors that also commutes with push-forwards.  Basic examples of AOCTs are Chow groups $\CH$ and Grothendieck's $K_0$ (see \cite[\S\S2.1, 2.5, 3.8]{Pan03} for further examples).

The connection between algebraic oriented cohomology theories and {\fgl}s is as follows.
Given two line bundles $L_1$ and $L_2$ over $X$, we have (see \cite[Lem.~1.1.3]{LM07})
\[
  \ch_1 (L_1 \otimes L_2) = \ch_1 (L_1) +_F \ch_1 (L_2),
\]
where $\ch_1$ is the first characteristic class with values in $\hh$ and $F$ is a one-dimensional commutative \fgl\  over the coefficient ring $R=\hh(\Spec k)$ associated to $\hh$.

There is an AOCT $\Omega$ defined over a field of characteristic zero, called \emph{algebraic cobordism} (see \cite[\S1.2]{LM07}), that is universal in the following sense: Given any AOCT $\hh$ there is a unique
morphism $\Omega \to \hh$ of AOCTs. The \fgl\ associated to $\Omega$ is the universal \fgl\ $F_U$.

Moreover, given a \fgl\  $F$ over a ring $R$ together with a morphism $\LL \to R$, we define a functor $X\mapsto
\hh(X):=\Omega(X)\otimes_\LL R$.  Over a field of characteristic zero, the functor $\hh$ gives an AOCT.

\begin{example}
  In the above terms, the additive \fgl\ corresponds to the theory of Chow groups.  The multiplicative periodic \fgl\ with $\beta \in R^\times$ corresponds to Grothendieck's $K_0$. The multiplicative \fgl\ with $\beta\notin R^\times$ corresponds to connective $K$-theory.
\end{example}

Let $G$ be a split simple simply connected linear algebraic group over a field $k$ corresponding to the root system $(\Lw,\Phi,\varrho)$.  Fix a split maximal torus $T$ and a Borel subgroup $B$ so that $T \subseteq B \subseteq G$. Let $G/B$ be the variety of Borel subgroups of $G$ and let $F$ be the \fgl\ over $R$ associated to an AOCT  $\hh$ satisfying the assumptions of \cite[Thm.~13.12]{CPZ09}.  Consider the formal group algebra $\FA$. Then there is a ring homomorphism, called the \emph{characteristic map} (see \cite[\S6]{CPZ09}),
\[
  \cc_F \colon \FA \to \hh(G/B),\quad x_\lambda \mapsto \ch_1(L(\lambda)),
\]
where $L(\lambda)$ is the line bundle associated to $\lambda \in \Lambda$.  Note that this map is neither injective nor surjective in general.  Its kernel contains the ideal generated by $W$-invariant elements, and $\hh(G/B)$ modulo the ideal generated by the image of $\cc_F$ is isomorphic to $\hh(G)$ (see \cite[Prop.~5.1]{GZ12}).

\begin{example}
  \begin{asparaenum}
    \item The characteristic map for the theory of Chow groups, i.e.\ corresponding to the additive \fgl, is given by
        \[
          \mathfrak{c}_A \colon \Z \llbracket\Lw\rrbracket_A \to \CH(G/B),\quad x_\lambda \mapsto c_1 \left( L(\lambda) \right),
        \]
        which recovers the usual characteristic map  for Chow groups (see \cite[\S1.5]{dem-des}).

    \item The characteristic map for Grothendieck's $K_0$, i.e.\ corresponding to the multiplicative periodic \fgl, is given by
        \[
          \cc_M \colon\Z \llbracket\Lw\rrbracket_M \to K_0(G/B),\quad x_\lambda \mapsto 1 -[L(\lambda)^\vee].
        \]
        Restricting to the integral group ring $\Z[\Lw]$ and using the identification of Example~\ref{eg:FGAs}\eqref{eg-item:FGA-mult}, we recover the usual characteristic map for $K_0$
        (\cite[\S1.6]{dem-des}) which maps $e^\lambda$ to $[L(\lambda)]$.

    \item Algebraic cobordism $\Omega$ defined over a field of characteristic 0 satisfies the assumptions of \cite[Thm.13.12]{CPZ09}.  Therefore, we have the characteristic map
        \[
          \cc_U\colon \LL\llbracket\Lw\rrbracket_U \to \Omega(G/B),\quad x_\lambda \mapsto \mathrm{c}^\Omega_1(L(\lambda)).
        \]
    \end{asparaenum}
\end{example}

Let $G/P_i$ be the projective homogeneous variety, where $P_i$ is the minimal parabolic subgroup of $G$ corresponding to the simple root $\alpha_i$, $i \in I$. Then
\[
  p\colon G/B=\mathbb{P}_{G/P_i}(1\oplus L(\omega_i)) \to G/P_i
\]
is the projective bundle associated to the vector bundle $1 \oplus L(\omega_i)$, there $1$ denotes the trivial bundle of rank one (see, for example, \cite[\S10.3]{CPZ09}).  Then the operators $C_\alpha^F$ introduced in Definition~\ref{Def:Coper} have the following geometric interpretation in terms of \emph{push-pull operators} (generalizing \cite[Prop.]{PiRa99}).

\begin{prop}[{\cite[Prop.~10.10(4)]{CPZ09}}] \label{prop:push-pull-ops} We have
\[
  p^* p_* (\cc_F(\chi)) = \cc_F(C_\alpha^F(\chi)),\;\text{ for all }\chi\in \FA.
\]
\end{prop}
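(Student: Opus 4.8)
The plan is to reduce the identity to a two-term computation over the ring of $s_\alpha$-invariants and then carry it out with the projective bundle formula for the $\mathbb{P}^1$-bundle $p$. Both $\chi\mapsto\cc_F(C_\alpha^F(\chi))$ and $\chi\mapsto p^*p_*(\cc_F(\chi))$ are $R$-linear continuous maps $\FA\to\hh(G/B)$, and the first step is to see that each is semilinear over the subring of $s_\alpha$-invariants. On the algebraic side, for an $s_\alpha$-invariant $\psi$ one reads off from Definition~\ref{def:Dem-operator} that $\Delta_\alpha^F(\psi\chi)=\psi\,\Delta_\alpha^F(\chi)$ (since $s_\alpha(\psi)=\psi$), and $\kappa_\alpha^F$ is $s_\alpha$-invariant because $g^F$ is symmetric; hence $C_\alpha^F(\psi\chi)=\psi\,C_\alpha^F(\chi)$, so $\cc_F(C_\alpha^F(\psi\chi))=\cc_F(\psi)\,\cc_F(C_\alpha^F(\chi))$. (Unwinding Definitions~\ref{def:Dem-operator} and~\ref{Def:Coper} also yields the two-term form $C_\alpha^F(\chi)=\chi/x_{-\alpha}+s_\alpha(\chi)/x_\alpha$, valid in the localization of $\FA$ inverting the $x_\beta$; this is the algebraic shadow of integration over the fibre $\mathbb{P}^1$ of $p$ with its two special points.) On the geometric side, $\cc_F$ carries the $s_\alpha$-invariants into $p^*\hh(G/P_\alpha)$, so the projection formula for $p$ gives $p^*p_*(\cc_F(\psi)\,\cc_F(\chi))=\cc_F(\psi)\,p^*p_*(\cc_F(\chi))$. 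Since $\FA$ is free of rank $|W_\alpha|=2$ as a topological module over its $s_\alpha$-invariants, with basis $\{1,x_\varpi\}$ for a weight $\varpi$ with $\langle\alpha^\vee,\varpi\rangle=1$, it suffices to verify the identity for $\chi=1$ and $\chi=x_\varpi$.

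For $\chi=1$ we have $C_\alpha^F(1)=\kappa_\alpha^F=g^F(x_\alpha,x_{-\alpha})$, so the goal is $p^*p_*(1)=\cc_F(\kappa_\alpha^F)$; for $\chi=x_\varpi$, since $s_\alpha\varpi=\varpi-\alpha$ and $x_{\varpi-\alpha}=x_\varpi+_F x_{-\alpha}$, the two-term form gives $C_\alpha^F(x_\varpi)=x_\varpi/x_{-\alpha}+(x_\varpi+_F x_{-\alpha})/x_\alpha$, so the goal is $p^*p_*(\ch_1 L(\varpi))=\cc_F(C_\alpha^F(x_\varpi))$. Both are handled by the projective bundle formula for $p$: express $\hh(G/B)$ as the free $\hh(G/P_\alpha)$-module on $1$ and the relative hyperplane class $\zeta$; use the dictionary $\zeta\leftrightarrow\cc_F(x_\varpi)$ (legitimate because $\cc_F(x_\varpi)=\ch_1 L(\varpi)$ restricts to $\mathcal{O}(1)$ on the fibres, as $\langle\alpha^\vee,\varpi\rangle=1$) and $\ch_1(\Omega_p)\leftrightarrow\cc_F(x_\alpha)$ for the relative cotangent bundle; then apply the push-forward rules $p_*(\zeta)=1$ together with the value of $p_*(1)$ obtained from the $F$-twisted quadratic relation satisfied by $\zeta$ (equivalently, from the Segre class of the rank-$2$ bundle defining $p$). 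Feeding the formal group law bookkeeping through these rules should turn $p^*p_*(1)$ into $\cc_F(g^F(x_\alpha,x_{-\alpha}))$ and $p^*p_*(\cc_F(x_\varpi))$ into $\cc_F(C_\alpha^F(x_\varpi))$.

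The step I expect to be the main obstacle is exactly this last matching: one must fix the normalizations of the relative $\mathcal{O}_p(1)$, of $\Omega_p$ against the root $\alpha$, and of the induced quadratic relation with every sign correct, and then verify that the resulting formal group identities reproduce $g^F(x_\alpha,x_{-\alpha})=\kappa_\alpha^F$ and the two-term operator $x_{-\alpha}^{-1}(\cdot)+x_\alpha^{-1}s_\alpha(\cdot)$ on the nose; this is precisely the spot where sign errors like those noted in the erratum tend to appear. Two supporting inputs, which are not hard but should be cited precisely from \cite{CPZ09}, are the inclusion $\cc_F(\FA^{s_\alpha})\subseteq p^*\hh(G/P_\alpha)$ (from functoriality of the characteristic map together with the standard freeness of formal group algebras over the invariant subring) and the rank-two freeness of $\FA$ over its $s_\alpha$-invariants; the push-forward formula for $\mathbb{P}^1$-bundles in an arbitrary algebraic oriented cohomology theory should likewise be taken from there.
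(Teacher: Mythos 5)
The proposition is quoted directly from \cite[Prop.~10.10(4)]{CPZ09}; the paper reproduces the statement but not the proof, so there is no in-paper argument to compare against. Taking your proposal on its own terms: the overall strategy is the right one and, as far as one can tell, matches the shape of the CPZ09 argument. The semilinearity of both sides over $\FA^{s_\alpha}$, the two-term rewriting $C_\alpha^F(\chi)=\chi/x_{-\alpha}+s_\alpha(\chi)/x_\alpha$ (which follows directly from Definitions~\ref{def:Dem-operator} and~\ref{Def:Coper} since the $\chi/x_\alpha$ terms cancel), and the use of the projective bundle formula for the $\mathbb{P}^1$-bundle $p$ are all sound.

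There are, however, two genuine gaps. First, the rank-two freeness of $\FA$ over $\FA^{s_\alpha}$ with basis $\{1,x_\varpi\}$ is asserted but not established; for a general formal group law this is not immediate, and one needs to check that $\Delta_\alpha^F(x_\varpi)=(x_\varpi-x_{\varpi-\alpha})/x_\alpha$ is a unit in $\FA$ (equivalently, that its image under the augmentation is a unit in $R$). Without this the reduction to $\chi=1$ and $\chi=x_\varpi$ does not go through. Second, and more seriously, the decisive step --- pinning down the $F$-twisted quadratic relation satisfied by $\zeta$, computing $p_*(1)$ and $p_*(\zeta)$ from it, and matching the result to $\cc_F(\kappa_\alpha^F)$ and to $\cc_F(C_\alpha^F(x_\varpi))$ with all signs and normalizations correct --- is precisely the content of the proposition, and you explicitly postpone it, flagging it as ``the main obstacle.'' Until that computation is executed, the proposal is an accurate plan of attack rather than a proof; the preliminary reductions are correct but routine, and the substance has been deferred.
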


%%%%%%%%%%%%%%%%%%%%%%%%%%%%%%%%%%%%%%%%%%%%%%%%%%%%%%%%%%%%%%%%
%
\section{Formal (affine) Demazure algebras: definitions} \label{sec:FDAs}
%
%%%%%%%%%%%%%%%%%%%%%%%%%%%%%%%%%%%%%%%%%%%%%%%%%%%%%%%%%%%%%%%%

In the present section, we introduce the notion of a twisted formal group algebra and a particular subalgebra, called the formal (affine) Demazure algebra, which is one of our main objects of interest.  Our method is inspired by the approach of \cite[\S4.1]{KoKu86}.

\begin{defin}[Twisted formal group algebra] \label{def:twisted-FGA}
  Let $Q^F = Q^{(R,F)}$ denote the subring of the field of fractions of $\FA$ generated by $\FA$ and $\{x_\lambda^{-1}\ |\  \lambda \in \Lambda \setminus \{0\}\}$.  The action of the Weyl group $W$ on $\FA$ induces an action by automorphisms on $Q^F$. We define the \emph{twisted formal group algebra} to be the smash product $Q_W^F := R[W] \ltimes_R Q^F$.  (This is sometimes denoted by $R[W] \# Q^F$.)  In other words, $Q_W^F$ is equal to $R[W] \otimes_R Q^F$ as an $R$-module, with multiplication given by
  \[
    (\delta_{w'} \psi') (\delta_w \psi) = \delta_{w'w} w^{-1}(\psi')\psi\; \text{ for all } w,w'\in W,\; \psi,\psi' \in Q^F
  \]
  (extended by linearity), where $\delta_w$ denotes the element in $R[W]$ corresponding to $w$ (so we have $\delta_{w'} \delta_w = \delta_{w'w}$ for $w,w' \in W$).
\end{defin}

Observe that $Q_W^F$ is a free right $Q^F$-module (via right multiplication) with basis $\{\delta_w\}_{w\in W}$.  Note that $Q_W^F$ is not a $Q^F$-algebra (but only an $R$-algebra) since $\delta_e Q^F=Q^F\delta_e$ is not central in $Q_W^F$. We denote $\delta_e$ (the unit element of $Q_W^F$) by $1$.

\begin{defin}[Formal Demazure element] \label{def:Demazure-element}
  For each root $\alpha \in \Phi$, we define the corresponding \emph{formal Demazure element}
  \[
    \iDelta^F_\alpha:=\tfrac{1}{x_{\alpha}} (1 - \delta_{s_\alpha}) = \tfrac{1}{x_\alpha} - \delta_{s_\alpha} \tfrac{1}{x_{-\alpha}} \in Q_W^F
  \]
  (cf.\ \cite[($\text{I}_{24}$)]{KoKu86}).  We will omit the superscript $F$ when the \fgl\ is clear from the context.
\end{defin}

We can now define our first main objects of study.

\begin{defin}[Formal (affine) Demazure algebra]
  The \emph{formal Demazure algebra} $D_F$ is the $R$-subalgebra of $Q_W^F$ generated by the formal Demazure elements $\iDelta_i^F$.  The \emph{formal affine Demazure algebra} $\bD_F$ is the $R$-subalgebra of $Q_W^F$ generated by $D_F$ and $\FA$. When we wish to specify the coefficient ring, we write $D_{R,F}$ (resp.\ $\bD_{R,F}$) for $D_F$ (resp.\ $\bD_F$).
\end{defin}

\begin{rem}
  Suppose $\hh$ is an algebraic oriented cohomology theory satisfying the assumptions of~\cite[Thm.~13.12]{CPZ09} and with \fgl\ $F$ (see Section~\ref{sec:AOCT}).  By Proposition~\ref{prop:push-pull-ops}, we see that, under the characteristic map $\cc_F$, the affine Demazure algebra $\bD_F$ corresponds to the algebra of operators on $\hh(G/B)$ generated by left multiplication (by elements of $\hh(G/B)$) and the push-pull operators $p^*p_*$.
\end{rem}

\begin{lem}[cf.\ {\cite[Prop.~4.2]{KoKu86}}] \label{lem:Dcomm}
  For all $\psi \in Q^F$ and $\alpha \in \Phi$, we have
  \[
    \psi \iDelta_\alpha=\iDelta_\alpha s_\alpha(\psi)+\Delta_{\alpha}(\psi),
  \]
  where $\Delta_{\alpha}(\psi)=\frac{\psi-s_\alpha(\psi)}{x_{\alpha}}\in Q^F$ is the formal Demazure operator applied to $\psi$ (see Definition~\ref{def:Dem-operator}).
\end{lem}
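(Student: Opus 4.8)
The plan is to compute the product $\psi \iDelta_\alpha$ directly in the twisted formal group algebra $Q_W^F$ using the defining multiplication rule, and to recognize the result as the claimed combination of $\iDelta_\alpha s_\alpha(\psi)$ and $\Delta_\alpha(\psi)$. First I would write $\psi = \delta_e \psi$ and $\iDelta_\alpha = \tfrac{1}{x_\alpha}(1 - \delta_{s_\alpha}) = \delta_e \tfrac{1}{x_\alpha} - \delta_{s_\alpha}\tfrac{1}{x_{-\alpha}}$, so that
\[
  \psi \iDelta_\alpha = (\delta_e \psi)\left( \delta_e \tfrac{1}{x_\alpha}\right) - (\delta_e \psi)\left( \delta_{s_\alpha} \tfrac{1}{x_{-\alpha}}\right).
\]
Applying the multiplication formula $(\delta_{w'}\psi')(\delta_w\psi) = \delta_{w'w}\, w^{-1}(\psi')\psi$ with $w' = e$ in both terms (and $w=e$, $w=s_\alpha$ respectively, noting $s_\alpha^{-1}=s_\alpha$), this becomes $\delta_e \tfrac{\psi}{x_\alpha} - \delta_{s_\alpha}\, s_\alpha(\psi)\tfrac{1}{x_{-\alpha}}$.

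Next I would massage the second term so that $s_\alpha(\psi)$ sits to the right of $\delta_{s_\alpha}$ in the same position it occupies in $\iDelta_\alpha s_\alpha(\psi)$. Since $s_\alpha(\psi)$ and $\tfrac{1}{x_{-\alpha}}$ are both elements of $Q^F$, they commute, so $\delta_{s_\alpha}\, s_\alpha(\psi)\tfrac{1}{x_{-\alpha}} = \delta_{s_\alpha}\tfrac{1}{x_{-\alpha}} s_\alpha(\psi)$; combined with $\delta_e\tfrac{\psi}{x_\alpha} = \delta_e\tfrac{1}{x_\alpha}\psi = \delta_e\tfrac{1}{x_\alpha}\big(s_\alpha(\psi) + (\psi - s_\alpha(\psi))\big)$ and the definition $\Delta_\alpha(\psi) = \tfrac{\psi - s_\alpha(\psi)}{x_\alpha}$, we get
\[
  \psi \iDelta_\alpha = \left(\delta_e\tfrac{1}{x_\alpha} - \delta_{s_\alpha}\tfrac{1}{x_{-\alpha}}\right) s_\alpha(\psi) + \delta_e\,\Delta_\alpha(\psi) = \iDelta_\alpha\, s_\alpha(\psi) + \Delta_\alpha(\psi),
\]
using that $\delta_e\,\Delta_\alpha(\psi)$ is just the image of $\Delta_\alpha(\psi) \in Q^F$ in $Q_W^F$. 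The only point that needs a word of care is that $\Delta_\alpha(\psi)$ genuinely lies in $Q^F$, i.e.\ that $\tfrac{\psi - s_\alpha(\psi)}{x_\alpha}$ makes sense: for $\psi \in \FA$ this is the divisibility statement of Definition~\ref{def:Dem-operator}, and for general $\psi \in Q^F$ it follows by extending $\Delta_\alpha$ to the localization $Q^F$ (writing $\psi$ as a fraction of elements of $\FA$ and using the Leibniz rule, or simply observing $\psi - s_\alpha(\psi) \in x_\alpha Q^F$ since $s_\alpha$ fixes $x_\alpha \cdot x_{-\alpha}$-type denominators up to sign). This is routine and I do not expect any genuine obstacle; the computation is a direct unwinding of the smash-product structure, exactly parallel to \cite[Prop.~4.2]{KoKu86}.
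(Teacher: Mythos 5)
Your proof is correct and follows essentially the same route as the paper's (which is just a compressed one-line version of your computation): expand $\psi\iDelta_\alpha$ via the smash-product rule, split $\tfrac{\psi}{x_\alpha}$ as $\tfrac{s_\alpha(\psi)}{x_\alpha}+\Delta_\alpha(\psi)$, and factor out $s_\alpha(\psi)$ on the right. One small remark: your concluding worry about $\Delta_\alpha(\psi)\in Q^F$ is a non-issue here, since $x_\alpha^{-1}\in Q^F$ by definition of $Q^F$ (so no divisibility argument is needed when $\psi\in Q^F$; divisibility is only relevant for the sharper statement that $\Delta_\alpha$ preserves $\FA$).
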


\begin{proof}
  We have
  \[
    \psi \iDelta_\alpha =  \psi \left( \tfrac{1}{x_\alpha}-\delta_{s_\alpha}\tfrac{1}{x_{-\alpha}} \right)
    = \tfrac{\psi-s_\alpha(\psi)}{x_{\alpha}} + \left( \tfrac{1}{x_\alpha} - \delta_{s_\alpha}\tfrac{1}{x_{-\alpha}} \right) s_\alpha(\psi) = \Delta_{\alpha}(\psi) + \iDelta_\alpha s_\alpha(\psi). \qedhere
  \]
\end{proof}

Proceeding from Lemma~\ref{lem:Dcomm} by induction, we obtain the following general formula (cf.\ \cite[($\text{I}_{26}$)]{KoKu86})
\[
  \psi\iDelta_{\beta_1}\iDelta_{\beta_2} \dotsm \iDelta_{\beta_s} = \sum_{(1\le i_1<\dotsb <i_r\le s)} \iDelta_{\beta_{i_1}}\iDelta_{\beta_{i_2}}\dotsm \iDelta_{\beta_{i_r}} \phi(i_1,\dotsc,i_r),
\]
where $\beta_1,\beta_2,\dotsc,\beta_s \in \Phi$ are roots and $\phi(i_1,\dotsc,i_r)\in Q^F$ is defined to be the composition $\Delta_{\beta_s}\circ \Delta_{\beta_{s-1}}\circ \dotsb \circ \Delta_{\beta_1}$ applied to $\psi$, where the Demazure operators at the places $i_1,\dotsc, i_r$ are replaced by the respective reflections.

Recall the elements $\kappa_\alpha$, $\alpha \in \Phi$, from Definition~\ref{Def:Coper}.  It is easy to verify that $\kappa_\alpha \delta_{s_\alpha}=\delta_{s_\alpha}\kappa_\alpha$, $\kappa_\alpha \iDelta_\alpha = \iDelta_\alpha \kappa_\alpha$, and
\begin{equation} \label{eq:Dem-elem-squared}
  \iDelta_\alpha^2=\iDelta_\alpha \kappa_\alpha.
\end{equation}

\begin{example}
  \begin{enumerate}
    \item For the additive and Lorentz {\fgl}s we obtain the \emph{nilpotence relation} $\iDelta_\alpha^2=0$ since $\kappa_\alpha^A = \kappa_\alpha^L= 0$.

    \item For the multiplicative \fgl\ we obtain the relation $\iDelta_\alpha^2 = \beta\iDelta_\alpha$, since $\kappa_\alpha^M = \beta$.  In particular, if $\beta=1$ we obtain the      \emph{idempotence relation} $\iDelta_\alpha^2=\iDelta_\alpha$.

    \item For the elliptic \fgl\ we have, in the notation of~Example~\ref{FGL:muexamples}\eqref{eg-item:elliptic-FGL},
        \[ \ts
          \iDelta_{\alpha}^2 = \frac{a_1 x_\alpha + a_3 v(x_\alpha)}{x_\alpha} \iDelta_\alpha.
        \]
        For example, if $a_3=0$, then $\iDelta_\alpha^2 = a_1 \iDelta_\alpha$.
\end{enumerate}
\end{example}

To simplify notation in what follows, for $i,j,i_1,\dotsc,i_k \in I$, we set
\begin{equation} \label{eq:subscript-shorthand}
  x_{\pm i}=x_{\pm \alpha_i},\ x_{\pm i \pm j}=x_{\pm \alpha_i \pm \alpha_j},\ \delta_{i_1i_2\ldots
  i_k}=\delta_{s_{i_1}s_{i_2}\dotsm s_{i_k}},\ \iDelta_{i_1 \dotsm i_k} = \iDelta_{\alpha_{i_1}} \dotsm \iDelta_{\alpha_{i_k}},\ \kappa_i=\kappa_{\alpha_i}.
\end{equation}
Furthermore, when we write an expression such as $\frac{\delta_w}{\varphi}$ for $w \in W$, $\varphi \in \FA$, we interpret this as being equal to $\delta_w \frac{1}{\varphi}$.  That is, we consider the numerators of rational expressions to be to the left of their denominators.

\begin{lem}
  For all $\lambda, \nu \in \Lambda \setminus \{0\}$ with $\lambda + \nu \ne 0$, the element
  \begin{equation} \label{eq:kappa-lambda-mu}
    \kappa_{\lambda, \nu} := \tfrac{1}{x_{\lambda + \nu} x_\nu} - \tfrac{1}{x_{\lambda+\nu} x_{-\lambda}} - \tfrac{1}{x_\lambda x_\nu}
  \end{equation}
  lies in $\FA$.
\end{lem}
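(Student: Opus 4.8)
The plan is to show that $\kappa_{\lambda,\nu}$ lies in $R\llbracket\Lambda\rrbracket_F$ by exhibiting it as a finite sum of products of elements of $R\llbracket\Lambda\rrbracket_F$, using the relation $\tfrac{1}{x_\mu}+\tfrac{1}{x_{-\mu}}\in R\llbracket\Lambda\rrbracket_F$ (this is $\kappa^F_\mu=g^F(x_\mu,x_{-\mu})$ from Definition~\ref{Def:Coper}, which holds for \emph{any} $\mu\in\Lambda\setminus\{0\}$ by the same argument as for roots, since it only uses $x_{-\mu}=-_F x_\mu$ and the divisibility $F(u,v)=u+v-uvg^F(u,v)$). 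Write $\kappa_\mu:=\tfrac{1}{x_\mu}+\tfrac{1}{x_{-\mu}}\in R\llbracket\Lambda\rrbracket_F$ for each such $\mu$; then $\tfrac{1}{x_{-\mu}}=\kappa_\mu-\tfrac{1}{x_\mu}$.

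First I would rewrite the three terms of \eqref{eq:kappa-lambda-mu} by clearing the "negative" denominators. In the second term, $\tfrac{1}{x_{-\lambda}}=\kappa_\lambda-\tfrac{1}{x_\lambda}$, so $-\tfrac{1}{x_{\lambda+\nu}x_{-\lambda}}=-\tfrac{\kappa_\lambda}{x_{\lambda+\nu}}+\tfrac{1}{x_{\lambda+\nu}x_\lambda}$. Substituting this in gives
\[
  \kappa_{\lambda,\nu}=\tfrac{1}{x_{\lambda+\nu}x_\nu}+\tfrac{1}{x_{\lambda+\nu}x_\lambda}-\tfrac{1}{x_\lambda x_\nu}-\tfrac{\kappa_\lambda}{x_{\lambda+\nu}}.
\]
The last summand lies in $R\llbracket\Lambda\rrbracket_F$ only if $\tfrac{1}{x_{\lambda+\nu}}$ does, which it does not — so this naive move is not enough by itself, and the key point must be that the first three summands combine. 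The main step is therefore to observe that
\[
  \tfrac{1}{x_{\lambda+\nu}x_\nu}+\tfrac{1}{x_{\lambda+\nu}x_\lambda}-\tfrac{1}{x_\lambda x_\nu}
  =\tfrac{1}{x_{\lambda+\nu}}\Bigl(\tfrac{1}{x_\lambda}+\tfrac{1}{x_\nu}\Bigr)-\tfrac{1}{x_\lambda x_\nu}
  =\tfrac{1}{x_{\lambda+\nu}}\cdot\tfrac{x_\lambda+x_\nu}{x_\lambda x_\nu}-\tfrac{1}{x_\lambda x_\nu}
  =\tfrac{1}{x_\lambda x_\nu}\Bigl(\tfrac{x_\lambda+x_\nu}{x_{\lambda+\nu}}-1\Bigr).
\]
Now $x_\lambda+_F x_\nu=x_{\lambda+\nu}$ in $R\llbracket\Lambda\rrbracket_F$, and by the definition of $g^F$ (Definition~\ref{Def:Coper}) we have $x_\lambda+_F x_\nu=x_\lambda+x_\nu-x_\lambda x_\nu\, g^F(x_\lambda,x_\nu)$; hence $x_\lambda+x_\nu=x_{\lambda+\nu}+x_\lambda x_\nu\, g^F(x_\lambda,x_\nu)$, so that $\tfrac{x_\lambda+x_\nu}{x_{\lambda+\nu}}-1=\tfrac{x_\lambda x_\nu\, g^F(x_\lambda,x_\nu)}{x_{\lambda+\nu}}$. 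Substituting back, the factors of $x_\lambda x_\nu$ cancel and we get
\[
  \tfrac{1}{x_{\lambda+\nu}x_\nu}+\tfrac{1}{x_{\lambda+\nu}x_\lambda}-\tfrac{1}{x_\lambda x_\nu}=\tfrac{g^F(x_\lambda,x_\nu)}{x_{\lambda+\nu}},
\]
which still has $x_{\lambda+\nu}$ in the denominator. Combining with the $-\kappa_\lambda/x_{\lambda+\nu}$ term, $\kappa_{\lambda,\nu}=\tfrac{1}{x_{\lambda+\nu}}\bigl(g^F(x_\lambda,x_\nu)-\kappa_\lambda\bigr)$, so I would finally show $g^F(x_\lambda,x_\nu)-\kappa_\lambda$ is divisible by $x_{\lambda+\nu}$ in $R\llbracket\Lambda\rrbracket_F$; equivalently, that $g^F(x_\lambda,x_\nu)$ and $\kappa_\lambda=g^F(x_\lambda,x_{-\lambda})$ agree modulo $x_{\lambda+\nu}$. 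But modulo the ideal $(x_{\lambda+\nu})$ we have $x_\nu\equiv x_{-\lambda}$ (since $x_\nu-x_{-\lambda}$ maps to $0$ after imposing $x_{\lambda+\nu}=0$, as $x_\nu+_F x_\lambda\equiv 0$ forces $x_\nu\equiv -_F x_\lambda=x_{-\lambda}$), hence $g^F(x_\lambda,x_\nu)\equiv g^F(x_\lambda,x_{-\lambda})=\kappa_\lambda\pmod{x_{\lambda+\nu}}$, giving the required divisibility and hence $\kappa_{\lambda,\nu}\in R\llbracket\Lambda\rrbracket_F$.

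The only subtlety — the step I expect to need the most care — is the divisibility claim at the end: "$x_\nu\equiv x_{-\lambda}\bmod x_{\lambda+\nu}$" must be justified inside $Q^F$ (or a suitable localization) rather than just formally, and the uniqueness-of-formal-inverse argument should be spelled out, perhaps by reducing to the quotient $R\llbracket\Lambda\rrbracket_F/(x_{\lambda+\nu})$ and using that $x_\lambda+_F x_{-\lambda}=0=x_\lambda+_F x_\nu$ there. Alternatively, one can avoid this by working in $R\llbracket\Lambda\rrbracket_F$ directly: write $x_\nu=x_{-\lambda}+x_{\lambda+\nu}\,r$ for some $r\in R\llbracket\Lambda\rrbracket_F$ (possible since $x_\nu-x_{-\lambda}$ vanishes under the $R$-algebra map $R\llbracket\Lambda\rrbracket_F\to R\llbracket\Lambda\rrbracket_F/(x_{\lambda+\nu})$, which I would justify via functoriality in $\Lambda$ applied to the quotient $\Lambda\to\Lambda/\Z(\lambda+\nu)$), and then expand $g^F(x_\lambda,x_\nu)-g^F(x_\lambda,x_{-\lambda})$ as $x_{\lambda+\nu}$ times a power series in the obvious way. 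Everything else is a direct, if slightly tedious, manipulation of rational expressions of the type already appearing in the paper, and presents no real obstacle.
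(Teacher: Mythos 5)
Your proof is correct and follows the same basic strategy as the paper's — rewrite $\kappa_{\lambda,\nu}$ as a difference of two evaluations of $g^F$ divided by some $x_\mu$, then prove divisibility — but with a genuinely different decomposition. The paper writes
\[
  \kappa_{\lambda,\nu}=\frac{g^F(x_{\lambda+\nu},x_{-\lambda})-g^F(x_\lambda,x_{-\lambda})}{x_\nu}
\]
and proves divisibility by $x_\nu$ via the congruence $u_1+_F u_2\equiv u_1\pmod{u_2}$ in $R\llbracket u_1,u_2,v\rrbracket$. You instead write
\[
  \kappa_{\lambda,\nu}=\frac{g^F(x_\lambda,x_\nu)-g^F(x_\lambda,x_{-\lambda})}{x_{\lambda+\nu}}
\]
and aim for divisibility by $x_{\lambda+\nu}$. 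Your algebra leading to this identity is correct and arguably more symmetric, since it expresses $\kappa_{\lambda,\nu}$ directly in terms of $g^F(x_\lambda,x_\nu)$ and $\kappa_\lambda$.

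The one place you are right to flag, however, is not fully repaired by your proposed fix. You suggest deducing $x_\nu-x_{-\lambda}\in(x_{\lambda+\nu})$ from the vanishing of $x_\nu-x_{-\lambda}$ under the map $\widehat f\colon\FA\to R\llbracket\Lambda/\Z(\lambda+\nu)\rrbracket_F$ induced by the quotient $\Lambda\to\Lambda/\Z(\lambda+\nu)$. But $\ker\widehat f$ need not equal the principal ideal $(x_{\lambda+\nu})$ — the quotient group may have torsion and the map $\FA/(x_{\lambda+\nu})\to R\llbracket\Lambda/\Z(\lambda+\nu)\rrbracket_F$ need not be injective — so vanishing under $\widehat f$ does not by itself yield the required divisibility. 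The clean argument is a power-series one, parallel to the paper's: since $F(-_F v,v)=0$, the factor theorem in $R\llbracket v\rrbracket\llbracket u\rrbracket$ gives $F(u,v)=(u-(-_F v))\,h(u,v)$ with $h(0,0)=\partial_u F(0,0)=1$, so $h$ is a unit in $R\llbracket u,v\rrbracket$ and $u-(-_F v)=F(u,v)\,h(u,v)^{-1}$. Specializing $u=x_\nu$, $v=x_\lambda$ gives $x_\nu-x_{-\lambda}=x_{\lambda+\nu}\,h(x_\nu,x_\lambda)^{-1}$ in $\FA$, which is exactly the divisibility you need, and then $g^F(x_\lambda,x_\nu)-g^F(x_\lambda,x_{-\lambda})$ is divisible by $x_\nu-x_{-\lambda}$ (and hence by $x_{\lambda+\nu}$) again by the factor theorem. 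With that replacement, your argument is complete and is a valid alternative to the paper's.
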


\begin{proof}
  We have
  \[
    g(x_{\lambda+\nu},x_{-\lambda}) = \tfrac{x_{\lambda+\nu} + x_{-\lambda}-x_\nu}{x_{\lambda+\nu}x_{-\lambda}} = \tfrac{1}{x_{-\lambda}} + \tfrac{1}{x_{\lambda+\nu}} - \tfrac{x_\nu}{x_{\lambda+\nu}x_{-\lambda}} \in \FA.
  \]
  and, hence,
  \[ \ts
    \kappa_{\lambda, \nu}=\frac{g(x_{\lambda+\nu},x_{-\lambda})-g(x_\lambda,x_{-\lambda})}{x_\nu},
  \]
  where $g(x_\lambda,x_{-\lambda})=\tfrac{1}{x_\lambda}+\tfrac{1}{x_{-\lambda}}\in \FA$.  Therefore, it suffices to show that $g(x_{\lambda+\nu},x_{-\lambda})-g(x_\lambda,x_{-\lambda})$ is divisible by $x_\nu$.  The latter follows (taking $u_1=x_\lambda$, $u_2=x_\nu$ and $v=x_{-\lambda}$) from the congruence $u_1 +_F u_2 \equiv u_1$ (mod $u_2$), which implies that $g(u_1 +_F u_2, v) \equiv g(u_1,v)$ (mod $u_2$).
\end{proof}

In what follows, for $i,j \in I$ and $n_1,n_2,m_1,m_2 \in \Z$, we will write $\kappa_{n_1 i + m_1j, n_2 i + m_2 j}$ for $\kappa_{n_1 \alpha_i + m_1 \alpha_j, n_2 \alpha_i + m_2 \alpha_j}$.  For example,
\begin{equation} \label{eq:kappa_ij}
  \kappa_{i,j}=\tfrac{1}{x_{i+j}} \Big( \tfrac{1}{x_{j}} - \tfrac{1}{x_{-i}} \Big) - \tfrac{1}{x_{i}x_{j}} \in \FA.
\end{equation}

\begin{prop} \label{prop:FDA-braid-relations}
  Suppose $i,j \in I$ and let $m_{ij}$ be the order of $s_i s_j$ in $W$.  Then
  \begin{equation} \label{eq:Demazure-general-braid-relation}
    \underbrace{\iDelta_j \iDelta_i \iDelta_j \dotsm}_{m_{ij} \text{ terms }} - \underbrace{\iDelta_i \iDelta_j \iDelta_i \dotsm}_{m_{ij} \text{ terms}} = \sum_{w \in W,\, 1 \le \ell(w) \le m_{ij}-2} \iDelta_w \eta_w^{ji}
  \end{equation}
  for some $\eta_w^{ji} \in Q^F$.  In particular, we have the following:
  \begin{enumerate}
    \item \label{prop-item:Delta_i-nonadjacent} If $\langle \alpha_i^\vee, \alpha_j \rangle=0$, so that $m_{ij}=2$, then $\iDelta_i \iDelta_j = \iDelta_j \iDelta_i$.

    \item \label{prop-item:Delta_i-adjacent} If $\langle \alpha_i^\vee, \alpha_j \rangle = \langle \alpha_j^\vee, \alpha_i \rangle=-1$ so that $m_{ij}=3$, then
        \begin{equation} \label{eq:Dem-A-braid-relation}
          \iDelta_j \iDelta_i \iDelta_j - \iDelta_i \iDelta_j \iDelta_i = \iDelta_i \kappa_{i,j} - \iDelta_j \kappa_{j,i}.
        \end{equation}

    \item \label{prop-item:Delta-BC_i-adjacent} If $\langle \alpha_i^\vee, \alpha_j \rangle=-1$ and $\langle \alpha_j^\vee, \alpha_i \rangle=-2$ so that $m_{ij}=4$, then
        \begin{multline} \label{eq:Dem-BC-braid-relation}
          \iDelta_{jiji} - \iDelta_{ijij} = \iDelta_{ij} (\kappa_{i+2j,-j} + \kappa_{j,i}) - \iDelta_{ji} (\kappa_{i+j,j} + \kappa_{i,j}) \\
          + \iDelta_j \big(\Delta_i(\kappa_{i+j,j} + \kappa_{i,j})\big) - \iDelta_i \big(\Delta_j(\kappa_{i+2j,-j} + \kappa_{j,i})\big).
        \end{multline}

    \item \label{prop-item:Delta-G2-adjacent} If $\langle \alpha_i^\vee, \alpha_j \rangle=-1$ and $\langle \alpha_j^\vee, \alpha_i \rangle=-3$ so that $m_{ij}=6$, then
        \begin{multline} \label{eq:Dem-G2-braid-relation}
          \iDelta_{jijiji} - \iDelta_{ijijij} \\
          = \iDelta_{ijij} (\kappa_{j,i} + \kappa_{2i+3j,-i-2j} + \kappa_{-i-3j,i+2j} + \kappa_{i+2j,-j}) - \iDelta_{jiji} (\kappa_{i,j} + \kappa_{-2i-3j,i+2j} + \kappa_{-i-2j,i+3j} + \kappa_{i+j,j}) \\
          + \iDelta_{jij} \big( \Delta_i (\kappa_{i,j} + \kappa_{-2i-3j,i+2j} + \kappa_{-i-2j,i+3j} + \kappa_{i+j,j}) \big)
          - \iDelta_{iji} \big( \Delta_j (\kappa_{j,i} + \kappa_{2i+3j,-i-2j} + \kappa_{-i-3j,i+2j} + \kappa_{i+2j,-j}) \big) \\
          + \iDelta_{ij} \xi_{ij} - \iDelta_{ji} \xi_{ji}
          + \iDelta_j (\Delta_i(\xi_{ji})) - \iDelta_i (\Delta_j(\xi_{ij}))
        \end{multline}
        where
        \begin{multline*}
          \xi_{ij} = \tfrac{1}{x_i x_{i+j} x_{i+2j} x_{2i+3j}} + \tfrac{1}{x_ix_jx_{i+2j}x_{-2i-3j}} + \tfrac{1}{x_ix_jx_{2i+3j}x_{-i-j}} - \tfrac{1}{x_ix_{i+j}x_{i+2j}x_{-i-3j}} \\
          - \tfrac{1}{x_ix_{i+j}x_{i+3j}x_{-j}} + \tfrac{1}{x_{i+j}x_{i+3j}x_{-j}x_{-2i-3j}} + \tfrac{1}{x_{i+3j}x_{2i+3j}x_{-j}x_{-i-2j}} + \tfrac{1}{{x_{i+j}x_{i+2j}x_{-i-3j}x_{-2i-3j}}} - \tfrac{1}{x_ix_jx_{i+2j}x_{i+3j}}
        \end{multline*}
        and
        \begin{multline*}
          \xi_{ji} = \tfrac{1}{x_ix_jx_{2i+3j}x_{-i-2j}} + \tfrac{1}{x_ix_jx_{i+2j}x_{-i-3j}} + \tfrac{1}{x_jx_{i+2j}x_{i+3j}x_{2i+3j}} - \tfrac{1}{x_ix_jx_{i+j}x_{2i+3j}} \\
          + \tfrac{1}{x_{i+j}x_{i+2j}x_{-i}x_{-2i-3j}} + \tfrac{1}{x_{i+3j}x_{2i+3j}x_{-i-j}x_{-i-2j}} + \tfrac{1}{x_{i+j}x_{i+3j}x_{-i}x_{-i-2j}} - \tfrac{1}{x_j x_{i+3j}x_{2i+3j}x_{-i-j}} - \tfrac{1}{x_jx_{i+j}x_{i+3j}x_{-i}}.
        \end{multline*}
  \end{enumerate}
\end{prop}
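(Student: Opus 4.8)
The plan is to prove the general relation~\eqref{eq:Demazure-general-braid-relation} first, and then extract each of the four explicit cases by direct computation. The starting point is the observation that both $\underbrace{\iDelta_j \iDelta_i \iDelta_j \dotsm}_{m_{ij}}$ and $\underbrace{\iDelta_i \iDelta_j \iDelta_i \dotsm}_{m_{ij}}$ expand in the basis $\{\delta_w\}_{w \in W}$ of $Q_W^F$ (as a right $Q^F$-module). Since each $\iDelta_\alpha = \tfrac{1}{x_\alpha}(1 - \delta_{s_\alpha})$, a product $\iDelta_{\beta_1} \dotsm \iDelta_{\beta_k}$ is a $Q^F$-combination of the $\delta_w$ with $w$ ranging over elements that can be written as subwords of $s_{\beta_1} \dotsm s_{\beta_k}$; in particular $\ell(w) \le k$. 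I would show that the coefficient of $\delta_w$ in the left-hand side of~\eqref{eq:Demazure-general-braid-relation} vanishes whenever $\ell(w) = m_{ij}$ or $\ell(w) = m_{ij} - 1$. For $\ell(w) = m_{ij}$: the only such $w$ appearing is $w_0 = s_i s_j s_i \dotsm = s_j s_i s_j \dotsm$ (the longest element of the rank-two parabolic, using $s_i s_j s_i \dotsm = s_j s_i s_j \dotsm$ with $m_{ij}$ factors each), and its coefficient in each product is $(-1)^{m_{ij}}$ times an explicit product of $x$-factors which one checks is the same for both orderings — this is essentially the classical fact that the top-degree term of a Demazure-type product is reduced-word independent. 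For $\ell(w) = m_{ij}-1$: one computes, as usual, that dropping one letter at each of the $m_{ij}$ positions contributes, and the two orderings again agree by a short direct check. This is really where the difference becomes a $Q^F$-combination of $\delta_w$ with $1 \le \ell(w) \le m_{ij}-2$; and since the difference visibly lies in $\bD_F \subseteq Q_W^F$, and $\bD_F$ has a basis adapted to the $\iDelta_w$, rewriting such a combination in terms of $\iDelta_w$ for $\ell(w) \le m_{ij}-2$ (using Lemma~\ref{lem:Dcomm} repeatedly) yields the stated form with $\eta_w^{ji} \in Q^F$. Part~\eqref{prop-item:Delta_i-nonadjacent} is then immediate since there are no $w$ with $1 \le \ell(w) \le 0$.

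For parts~\eqref{prop-item:Delta_i-adjacent}--\eqref{prop-item:Delta-G2-adjacent}, I would carry out the expansion explicitly. The key computational tool is the commutation rule $\psi \iDelta_\alpha = \iDelta_\alpha s_\alpha(\psi) + \Delta_\alpha(\psi)$ of Lemma~\ref{lem:Dcomm}, together with its iterated form stated after that lemma, which lets one move all rational coefficients to the right past the $\iDelta$'s. Concretely: write $\iDelta_j \iDelta_i \iDelta_j \dotsm$ and $\iDelta_i \iDelta_j \iDelta_i \dotsm$ each as $\sum_w \iDelta_w (\text{coefficient})$ by repeatedly applying $\iDelta_\alpha \tfrac{1}{x_\beta} = \iDelta_\alpha s_\alpha(\tfrac1{x_\beta}) \cdot(\dots)$ — equivalently, expand each $\iDelta_\alpha = \tfrac1{x_\alpha} - \delta_{s_\alpha}\tfrac1{x_{-\alpha}}$ into the $\delta_w$-basis, subtract, and then convert back. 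In the $m_{ij}=3$ case this is short and gives~\eqref{eq:Dem-A-braid-relation} after recognizing $\kappa_{i,j} = \tfrac{1}{x_{i+j}}\big(\tfrac1{x_j} - \tfrac1{x_{-i}}\big) - \tfrac1{x_i x_j}$ as in~\eqref{eq:kappa_ij} — the identities $\kappa_{\lambda,\nu} \in \FA$ established in the lemma above are precisely what guarantees the coefficients of $\iDelta_i$ and $\iDelta_j$ lie in $\FA$ (indeed in $Q^F$, as claimed). For $m_{ij}=4$ and $m_{ij}=6$ I would organize the bookkeeping by the lengths of the intermediate $w$: the coefficients of $\iDelta_{ij}, \iDelta_{ji}$ (length two), and of $\iDelta_i, \iDelta_j$ (length one) are what appear, with no length-zero term by the general result. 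The length-one coefficients are obtained from the length-two ones by applying $\Delta_i$ or $\Delta_j$, which is why~\eqref{eq:Dem-BC-braid-relation} and~\eqref{eq:Dem-G2-braid-relation} have the shape ``$\iDelta_{ij}(\cdots) - \iDelta_{ji}(\cdots) + \iDelta_j\Delta_i(\cdots) - \iDelta_i\Delta_j(\cdots)$''; this pattern is forced by moving coefficients leftward with Lemma~\ref{lem:Dcomm}.

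The main obstacle is unquestionably the $G_2$ case~\eqref{prop-item:Delta-G2-adjacent}: expanding a product of six Demazure elements in each of the two reduced words, subtracting, and simplifying the resulting mass of rational functions into the closed form involving $\kappa$'s and the auxiliary series $\xi_{ij}, \xi_{ji}$ is a genuinely large calculation. The strategy to keep it manageable is to (i) work in the rank-two root system spanned by $\alpha_i, \alpha_j$, where all relevant roots $\{i, j, i+j, i+2j, i+3j, 2i+3j\}$ and their negatives are explicitly enumerable; (ii) track only the $\delta_w$-coefficients, exploiting that $s_i s_j$ has order $6$ so the Weyl group has $12$ elements; (iii) verify the $\FA$-membership of each coefficient using the key lemma (that $\kappa_{\lambda,\nu} \in \FA$) rather than expanding in power series; and (iv) sanity-check by specializing to the additive FGL, where every $\kappa = 0$ and $\xi_{ij}, \xi_{ji}$ should vanish, recovering the classical braid relation $\iDelta_{jijiji} = \iDelta_{ijijij}$ of the nil Hecke algebra. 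A secondary subtlety, flagged in the paper's erratum note, is sign-tracking in~\eqref{eq:Dem-G2-braid-relation}: the signs of the $\xi$-terms and of the $\kappa$-combinations must be pinned down carefully, since an error there is exactly the kind of mistake that slipped into the original version. I would therefore double-check the $G_2$ coefficients by an independent route — for instance, by computing the difference $\iDelta_{jijiji} - \iDelta_{ijijij}$ applied to a generic element $\psi \in Q^F$ and matching against the iterated Leibniz formula.
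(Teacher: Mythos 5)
Your overall strategy mirrors the paper's: expand both products in the $\delta_w$-basis, show the coefficients of $\delta_w$ for $\ell(w)=m_{ij}$ and $\ell(w)=m_{ij}-1$ cancel between the two reduced words (using that $s_is_j\cdots=s_js_i\cdots$ at length $m_{ij}$ and the identification of the inversion set of the longest element of the rank-two parabolic), and then pass to the $\iDelta_w$-basis using the upper-triangularity of the change of basis. This matches the paper through that point, and your plan for parts (a)--(d) (explicit expansion, recognizing $\kappa_{\lambda,\nu}$, organizing by length of intermediate $w$) is also the paper's route, with the $m_{ij}=4,6$ computations likewise deferred.

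There is, however, a genuine gap: you never establish the lower bound $1\le\ell(w)$ in~\eqref{eq:Demazure-general-braid-relation}. Your argument shows only that the $\delta_w$-coefficients vanish for $\ell(w)\ge m_{ij}-1$, so after converting to the $\iDelta_w$-basis you have a combination over $0\le\ell(w)\le m_{ij}-2$, and the coefficient $\eta_e^{ji}$ of $\iDelta_e=1$ is not visibly zero. Your proposal slides from ``a $Q^F$-combination of $\delta_w$ with $1\le\ell(w)\le m_{ij}-2$'' (a claim you did not argue) to ``$\iDelta_w$ for $\ell(w)\le m_{ij}-2$'' (dropping the lower bound) to ``the stated form'' (which has the lower bound back), and later in the $m_{ij}=4,6$ discussion you invoke ``no length-zero term by the general result,'' which is circular since the general result is what is being proved. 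Two issues compound here: first, you would need a separate argument that the $\delta_e$-coefficient of the difference vanishes; second, even if it did, the change of basis $\delta_w\mapsto\iDelta_w$ is only upper-triangular, not strictly so — e.g.\ $\delta_{s_i}=\tfrac{x_{-i}}{x_i}-\iDelta_i x_{-i}$ contributes to the $\iDelta_e$-coefficient — so vanishing of the $\delta_e$-coefficient alone would not force $\eta_e^{ji}=0$. The paper closes this gap by a distinct idea you do not use: it invokes the natural action of $Q_W^F$ on $Q^F$ (with $Q^F$ acting by multiplication and $R[W]$ by the Weyl group action), under which $\iDelta_p(1)=\Delta_p(1)=0$, applies both sides of~\eqref{eq:Demazure-general-braid-relation} to $1$, and uses the vanishing of the left-hand side to force the constant term on the right to vanish. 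You should add this step (or an equivalent argument isolating the length-zero coefficient) to complete the proof of the general relation; without it, part~\eqref{prop-item:Delta_i-nonadjacent} in particular does not yet follow from your general statement, since you would only know the difference is some multiple of $\iDelta_e$.
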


\begin{proof}
  For $\alpha \in \Phi$, let $\chi_\alpha = \frac{1}{x_\alpha}$ and $\chi_\alpha' =- \frac{1}{x_\alpha}$.  As usual, we set $\chi_{\pm i} = \chi_{\pm \alpha_i}$, $\chi_{\pm i \pm j} = \chi_{\pm \alpha_i \pm \alpha_j}$ and similarly for the primed versions.  Let
  \[
    (n,k) =
    \begin{cases}
      (j,i) & \text{if } m_{ij} \text{ is even}, \\
      (i,j) & \text{if } m_{ij} \text{ is odd}.
    \end{cases}
  \]
  Then we have
  \[ \ts
    \underbrace{\iDelta_j \iDelta_i \dotsm \iDelta_k}_{m_{ij} \text{ terms}} = (\chi_j + \chi_j' \delta_j) (\chi_i + \chi_i' \delta_i) \dotsm (\chi_k + \chi_k' \delta_k).
  \]
  Since the $\delta_w$, $w \in W$, form a basis of $Q_W^F$ as a right $Q^F$-module, this can be written as a sum of (right) $Q^F$-multiplies of $\delta_w$.  The leading term (with respect to the length of $w$) is
  \[
    \delta_{\underbrace{s_j s_i s_j \dotsm s_n s_k}_{m_{ij} \text{ terms}}} (\chi'_{-\underbrace{s_k s_n \dotsm s_j s_i}_{m_{ij}-1 \text{ terms}} (\alpha_j)} \dotsm \chi'_{-s_k(\alpha_n)} \chi_{-\alpha_k})^{-1},
  \]
  Now, by \cite[Cor.~2 de la Prop.~17]{Bou81},
  \[
    \alpha_k,\ s_k(\alpha_n), \dotsc,\ \underbrace{s_k s_n \dotsm s_j s_i}_{m_{ij-1} \text{ terms}} (\alpha_j)
  \]
  are precisely the positive roots mapped to negative roots by $\delta_{s_j s_i s_j \dotsm}$ ($m_{ij}$ reflections in the subscript).  Since $\delta_{s_j s_i s_j \dotsm} = \delta_{s_i s_j s_i \dotsm}$ ($m_{ij}$ reflections in each subscript), we see that the highest order terms in $\iDelta_j \iDelta_i \iDelta_j \dotsm - \iDelta_i \iDelta_j \iDelta_i \dotsm$ cancel.

  Now we consider the terms of order $m_{ij}-1$.  Let
  \[
    B = \underbrace{\chi_i' \delta_i \chi_j' \delta_j \dotsm \chi_k' \delta_k}_{m_{ij}-1 \text{ pairs}},\quad B' = \underbrace{\chi_j' \delta_j' \chi_i' \delta_i \dotsm \chi_n' \delta_n}_{m_{ij}-1 \text{ pairs}}.
  \]
  Then the terms of order $m_{ij}-1$ in $\iDelta_j \iDelta_i \iDelta_j \dotsm$ and $\iDelta_i \iDelta_j \iDelta_i \dotsm$ ($m_{ij}$ terms in each product) are $\chi_jB + B'\chi_k$ and $\chi_iB' + B\chi_n$ respectively.  It is well known that, if $w_0$ is the longest element of the Weyl group generated by the simple reflections $s_i$ and $s_j$, then $w_0 (\alpha_p) = -\alpha_{p'}$, $p \in \{i,j\}$, where $p'=p$ for $m_{ij}$ equal to 4 or 6 and $i'=j$, $j'=i$ for $m_{ij}=3$.  It follows that $\chi_j B = B \chi_n$ and $\chi_i B' = B' \chi_k$.  Thus the terms of order $m_{ij}-1$ in $\iDelta_j \iDelta_i \iDelta_j \dotsm - \iDelta_i \iDelta_j \iDelta_i \dotsm$ cancel.

  Now, it is easy to see from the definition of the formal Demazure elements (Definition~\ref{def:Demazure-element}) that, for $i_1,\dotsc,i_p \in I$, the element $\Delta_{i_1 \dotsm i_p}$ is equal to a nonzero multiple of $\delta_{i_1 \dotsm i_p}$ plus lower order terms.  Combined with the above, this proves~\eqref{eq:Demazure-general-braid-relation}, but with the sum on the right hand side over $w \in W$ with $\ell(w) \le m_{ij}-2$.  To complete the proof of~\eqref{eq:Demazure-general-braid-relation}, it thus remains to consider the constant terms (i.e.\ terms of order 0).  There is a natural action of $Q_W^F$ on $Q^F$, where $Q^F \subseteq Q_W^F$ acts by left multiplication and $R[W]$ acts via the action of the Weyl group.  Under this action $\iDelta_p(1) = \Delta_p(1) = 0$ for $p \in \{i,j\}$.  Thus the constant term of the right hand side of~\eqref{eq:Demazure-general-braid-relation} must be zero.

  Under the assumptions of~\eqref{prop-item:Delta_i-nonadjacent}, we have $s_i(\alpha_j)=\alpha_j$ and
  \[
    \iDelta_i\iDelta_j = \left( \tfrac{1}{x_i}-\tfrac{\delta_i}{x_{-i}} \right) \left( \tfrac{1}{x_j} - \tfrac{\delta_j}{x_{-j}} \right) = \tfrac{1}{x_ix_j} - \tfrac{\delta_i}{x_{-i}x_j} - \tfrac{1}{x_i} \tfrac{\delta_j}{x_{-j}} + \tfrac{\delta_i}{x_{-i}} \tfrac{\delta_j}{x_{-j}} = \tfrac{1}{x_ix_j} - \tfrac{\delta_i}{x_{-i}x_j} - \tfrac{\delta_j}{x_ix_{-j}} + \tfrac{\delta_{ij}}{x_{-i}x_{-j}}.
  \]
  Since the final expression is symmetric in $i$ and $j$, we have $\iDelta_i\iDelta_j=\iDelta_j\iDelta_i$.

  We now prove~\eqref{prop-item:Delta_i-adjacent}.  We have $s_i(\alpha_j) = s_j(\alpha_i) = \alpha_i+\alpha_j$ and $s_is_j(\alpha_i)=\alpha_j$.  Thus
  \begin{align*}
    \iDelta_j\iDelta_i\iDelta_j &= \left( \tfrac{1}{x_j}-\tfrac{\delta_j}{x_{-j}} \right) \left( \tfrac{1}{x_i}-\tfrac{\delta_i}{x_{-i}} \right) \left( \tfrac{1}{x_j}-\tfrac{\delta_j}{x_{-j}} \right) \\
    &= \tfrac{1}{x_ix_j^2} - \tfrac{1}{x_jx_i} \tfrac{\delta_j}{x_{-j}} - \tfrac{1}{x_j}\tfrac{\delta_i}{x_{-i}x_j} - \tfrac{\delta_j}{x_{-j}x_ix_j} + \tfrac{1}{x_j} \tfrac{\delta_i}{x_{-i}} \tfrac{\delta_j}{x_{-j}} + \tfrac{\delta_j}{x_{-j}x_i} \tfrac{\delta_j}{x_{-j}} + \tfrac{\delta_j}{x_{-j}}\tfrac{\delta_i}{x_{-i}x_j} - \tfrac{\delta_j}{x_{-j}} \tfrac{\delta_i}{x_{-i}} \tfrac{\delta_j}{x_{-j}} \\
    &= \tfrac{1}{x_ix_j^2} - \tfrac{\delta_j}{x_{i+j}x_{-j}^2} - \tfrac{\delta_i}{x_{i+j}x_{-i}x_j} - \tfrac{\delta_j}{x_{-j}x_ix_j} + \tfrac{\delta_{ij}}{x_ix_{-i-j}x_{-j}} + \tfrac{1}{x_{i+j}x_{-j}x_j} + \tfrac{\delta_{ji}}{x_{-i-j}x_{-i}x_j} - \tfrac{\delta_{jij}}{x_{-i-j}x_{-i}x_{-j}} \\
    &= \iDelta_j \left( \tfrac{1}{x_ix_j} + \tfrac{1}{x_{i+j}x_{-j}}\right) + \iDelta_i\tfrac{1}{x_{i+j}x_j} + \tfrac{\delta_{ij}}{x_ix_{-i-j}x_{-j}} + \tfrac{\delta_{ji}}{x_{-i-j}x_{-i}x_j} - \tfrac{\delta_{jij}}{x_{-i-j}x_{-i}x_{-j}} - \tfrac{1}{x_ix_{i+j}x_j}.
  \end{align*}
  Using the fact that $s_is_js_i=s_js_is_j$, we obtain
  \begin{equation} \label{eq:braid-coefficient}
    \iDelta_j\iDelta_i\iDelta_j-\iDelta_i\iDelta_j\iDelta_i = \iDelta_i \left( \tfrac{1}{x_{i+j}x_j} - \tfrac{1}{x_{i+j}x_{-i}} - \tfrac{1}{x_ix_j}\right) - \iDelta_j \left( \tfrac{1}{x_{i+j}x_i} - \tfrac{1}{x_{i+j}x_{-j}} - \tfrac{1}{x_ix_j}\right).
  \end{equation}

  The proofs of~\eqref{eq:Dem-BC-braid-relation} and~\eqref{eq:Dem-G2-braid-relation} are similar to the proof of~\eqref{eq:Dem-A-braid-relation} and will be omitted.
\end{proof}

\begin{rem}\label{rem:type-G}
  Note that Proposition~\ref{prop:FDA-braid-relations} asserts that all of the coefficients $\eta^{ji}_w$ lie in $\FA$ except in the case $m_{ij}=6$.  In fact, we expect that the coefficients $\xi_{ij}$ and $\xi_{ji}$ (for $i,j \in I$ with $m_{ij}=6$) also lie in $\FA$.  In other words, we conjecture that one in fact has $\eta_w^{ji} \in \FA$ (instead of $Q^F$) in the statement of Proposition~\ref{prop:FDA-braid-relations}.
\end{rem}

\begin{rem}\label{kijzero}
  Note that
  \[ \ts
    \kappa_{i,j} = \frac{x_i(x_{-i}-x_j)-x_{i+j}x_{-i}}{x_i x_{-i} x_j x_{i+j}}.
  \]
  By \cite[p.~809]{BE90}, the numerator of the above expression equals zero if and only if $F(u,v)=u+v+a_{11}uv$ for some $a_{11}\in R$ (i.e.\ if and only if $F$ is the additive or multiplicative {\fgl}). Therefore, contrary to the situation for the additive and multiplicative {\fgl}s, the formal Demazure elements do \emph{not} satisfy the braid relations in general (cf.\ \cite[Thm.~3.7]{BE90}).
\end{rem}

\begin{rem}
  Observe that the key difference between our setting and the setting of \cite{BE90} is that we deal with \emph{algebraic theories} for which the groups $\hh(BT)$ and their properties, which are used extensively in \cite{BE90}, are not well defined or remain unknown. Instead, we rely on the formal group algebra $\FA$ (as a replacement for $\hh(BT)$) and techniques introduced in \cite{CPZ09}.
\end{rem}

For each $w \in W$, fix a reduced decomposition $w = s_{i_1} \dotsm s_{i_k}$ and set
\begin{equation} \label{eq:iDelta_w-def}
  \iDelta_w = \iDelta_{i_1} \dotsm \iDelta_{i_k}.
\end{equation}
Note that, in general, $\iDelta_w$ depends on the choice of reduced decomposition.

\begin{defin}[$\tilde R$ and $R \llbracket \Lambda \rrbracket^\sim_F$]
  Let $\tilde R$ be the subalgebra of $Q^F$ defined by
  \begin{equation}
    \tilde R := R[W] \cdot R [\eta_{ij}^w\ |\ i,j \in I,\ w \in W,\ 1 \le \ell(w) \le m_{ij}-2],
  \end{equation}
  where $R[W] \cdot$ denotes the natural action of the group algebra $R[W]$ of $W$ on $Q^F$.  Similarly, define
  \begin{equation}
    R \llbracket \Lambda \rrbracket^\sim_F := R[W] \cdot R \llbracket \Lambda \rrbracket_F [\xi_{ij}, \xi_{ji}\ |\ i,j \in I,\ m_{ij}=6].
  \end{equation}
  Note that $R \llbracket \Lambda \rrbracket^\sim_F = R \llbracket \Lambda \rrbracket_F$ if $m_{ij} \ne 6$ for all $i,j \in I$.  In fact, we expect that $R \llbracket \Lambda \rrbracket^\sim_F = R \llbracket \Lambda \rrbracket_F$ in general (see Remark~\ref{rem:type-G}).
\end{defin}

The following lemma is an easy generalization of \cite[Thm.~4.6]{KoKu86} (which considers the case of the additive \fgl).

\begin{lem} \label{lem:iDelta-basis}
  The set $\{\iDelta_w\ |\ w \in W\}$ forms a basis of $D_F \otimes_R \tilde R$ as a right (or left) $\tilde R$-module and a basis of $\bD_F \otimes_{\FA} R \llbracket \Lambda \rrbracket^\sim_F$ as a right (or left) $R \llbracket \Lambda \rrbracket^\sim_F$-module.
\end{lem}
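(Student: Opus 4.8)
The plan is to establish the two claims (about $D_F \otimes_R \tilde R$ and about $\bD_F \otimes_{\FA} R\llbracket\Lambda\rrbracket^\sim_F$) in parallel, following the strategy used by Kostant--Kumar for the additive case but keeping track of the error terms $\eta^{ji}_w$ and $\xi_{ij}, \xi_{ji}$ that arise from Proposition~\ref{prop:FDA-braid-relations}. First I would show that the $\iDelta_w$ span. Every element of $D_F$ (resp.\ $\bD_F$) is by definition a sum of products $\iDelta_{i_1}\cdots\iDelta_{i_p}$ (resp.\ such products interleaved with elements of $\FA$). Using Lemma~\ref{lem:Dcomm} repeatedly, one can move all elements of $Q^F$ (or $\FA$) to the right of all the Demazure elements, at the cost of applying Demazure operators; the point is that $\Delta_\alpha(\FA) \subseteq \FA$ and $\Delta_\alpha(\tilde R) \subseteq \tilde R$ (resp.\ $\Delta_\alpha(R\llbracket\Lambda\rrbracket^\sim_F) \subseteq R\llbracket\Lambda\rrbracket^\sim_F$), which holds precisely because $\tilde R$ and $R\llbracket\Lambda\rrbracket^\sim_F$ are defined to be stable under the $W$-action and hence under the Demazure operators. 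This reduces us to spanning by monomials $\iDelta_{i_1}\cdots\iDelta_{i_p}$ with coefficients on the right. Then I would run a downward induction on $p$: if the word $s_{i_1}\cdots s_{i_p}$ is not reduced, use the relation $\iDelta_\alpha^2 = \iDelta_\alpha\kappa_\alpha$ from~\eqref{eq:Dem-elem-squared} together with the braid relations~\eqref{eq:Demazure-general-braid-relation} to rewrite the monomial as a combination of shorter monomials times coefficients that, crucially, lie in $\tilde R$ (resp.\ $R\llbracket\Lambda\rrbracket^\sim_F$) — this is exactly why those coefficient rings were introduced, and why the $m_{ij}=6$ case forces the enlargement by $\xi_{ij},\xi_{ji}$. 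If the word is reduced, then up to shorter monomials it equals $\iDelta_w$ for the fixed reduced decomposition of $w = s_{i_1}\cdots s_{i_p}$, again using the braid relations and the fact that any two reduced decompositions are connected by braid moves (Matsumoto/Tits). This proves spanning.

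For linear independence, I would use the natural action of $Q^F_W$ on $Q^F$ mentioned in the proof of Proposition~\ref{prop:FDA-braid-relations}, under which $Q^F$ acts by left multiplication and $R[W]$ acts through $W$. Equivalently, work directly in the right $Q^F$-module decomposition $Q^F_W = \bigoplus_{w\in W} \delta_w Q^F$. Expanding $\iDelta_w = \iDelta_{i_1}\cdots\iDelta_{i_k}$ (for the fixed reduced decomposition) in this basis, one checks as in the proof of Proposition~\ref{prop:FDA-braid-relations} that the top term (with respect to length) is $\delta_w$ times a nonzero element of $Q^F$, and all other terms involve $\delta_{w'}$ with $\ell(w') < \ell(w)$. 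Hence the transition matrix from $\{\iDelta_w\}_{w\in W}$ to $\{\delta_w\}_{w\in W}$, viewed over $Q^F$, is triangular with invertible diagonal entries with respect to any linear extension of the Bruhat/length order. Since $\tilde R \subseteq Q^F$ and $R\llbracket\Lambda\rrbracket^\sim_F \subseteq Q^F$, any $\tilde R$-linear (resp.\ $R\llbracket\Lambda\rrbracket^\sim_F$-linear) relation among the $\iDelta_w$ would give a $Q^F$-linear relation among the $\delta_w$, forcing all coefficients to vanish. This gives linear independence over both coefficient rings simultaneously, and the same triangularity argument works for left modules after using Lemma~\ref{lem:Dcomm} to convert left coefficients to right coefficients.

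Finally, one must check the basis is a basis of the claimed tensor-product modules, i.e.\ that $D_F \otimes_R \tilde R$ and $\bD_F \otimes_{\FA} R\llbracket\Lambda\rrbracket^\sim_F$ actually sit inside $Q^F_W$ so that the above makes sense: this follows because $\tilde R, R\llbracket\Lambda\rrbracket^\sim_F \subseteq Q^F$ and the multiplication map $D_F \otimes_R \tilde R \to Q^F_W$ (resp.\ $\bD_F \otimes_{\FA} R\llbracket\Lambda\rrbracket^\sim_F \to Q^F_W$) has image the $\tilde R$-span (resp.\ $R\llbracket\Lambda\rrbracket^\sim_F$-span) of the $\iDelta_w$ by the spanning argument, and is injective by the independence argument. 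I expect the main obstacle to be the spanning step: making the downward induction genuinely terminate requires care that every reduction — whether via $\iDelta_\alpha^2 = \iDelta_\alpha\kappa_\alpha$, via the braid relations, or via commuting coefficients past $\iDelta$'s using Lemma~\ref{lem:Dcomm} — produces only shorter monomials with coefficients staying within $\tilde R$ (resp.\ $R\llbracket\Lambda\rrbracket^\sim_F$), and this is precisely what pins down the definition of those rings as the smallest $W$-stable subrings containing the relevant $\eta$'s and $\xi$'s. The length-triangularity in the $\delta_w$-expansion, by contrast, is routine once the leading-term computation from Proposition~\ref{prop:FDA-braid-relations} is in hand.
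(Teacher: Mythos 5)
Your proposal follows essentially the same route as the paper's proof: spanning via the quadratic relation~\eqref{eq:Dem-elem-squared}, the braid relations of Proposition~\ref{prop:FDA-braid-relations}, and Lemma~\ref{lem:Dcomm}; linear independence via a length-triangular change of basis from $\{\iDelta_w\}$ to $\{\delta_w\}$ inside $Q_W^F$, which forces any $Q^F$-linear (hence $\tilde R$- or $R\llbracket\Lambda\rrbracket^\sim_F$-linear) relation to be trivial. The independence half and the concluding remark about the tensor-product modules sitting inside $Q_W^F$ are fine.

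However, there is a concrete flaw in the spanning half, in the sentence claiming that $\Delta_\alpha(\tilde R)\subseteq\tilde R$ and $\Delta_\alpha(R\llbracket\Lambda\rrbracket^\sim_F)\subseteq R\llbracket\Lambda\rrbracket^\sim_F$ ``hold precisely because $\tilde R$ and $R\llbracket\Lambda\rrbracket^\sim_F$ are defined to be stable under the $W$-action and hence under the Demazure operators.'' That inference is not valid. Stability under $W$ only gives $\varphi - s_\alpha(\varphi)\in\tilde R$; the Demazure operator also divides by $x_\alpha$, and $W$-stability of a subring of $Q^F$ says nothing about whether that quotient lands back in the subring. (For $\FA$ the divisibility is a theorem --- \cite[Cor.~3.4]{CPZ09} --- not a formal consequence of $W$-stability; and the ring $\tilde R$ need not even contain $x_\alpha$.) So the sentence as written asserts a property that you would actually need to check, and gives a justification that does not establish it. The paper's own proof of this lemma asserts the spanning statement without offering a closure argument at all; your version is more explicit but supplies an incorrect reason for the step. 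To repair it you would either need to prove that $\tilde R$ (resp.\ $R\llbracket\Lambda\rrbracket^\sim_F$) is genuinely closed under all $\Delta_\alpha$, or organize the rewriting so that the coefficients pushed to the right never leave the coefficient ring --- and neither of these is a consequence of $W$-stability alone.
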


\begin{proof}
  Since $R$ is a domain, so are $\tilde R$ and $R \llbracket \Lambda \rrbracket^\sim_F$. By~\eqref{eq:Dem-elem-squared} and Proposition~\ref{prop:FDA-braid-relations}, we can write any product of formal Demazure elements as a $\tilde R$-linear combination of the elements $\iDelta_w$, $w \in W$.  Combined with Lemma~\ref{lem:Dcomm}, we can write any product of formal Demazure elements and elements of $R \llbracket \Lambda \rrbracket^\sim_F$ as an $ R \llbracket \Lambda \rrbracket^\sim_F$-linear combination of the elements $\iDelta_w$, $w \in W$.
  Thus $\{\iDelta_w\ |\ w \in W\}$ is a spanning set of the modules in the statement of the lemma.  Now, it is easy to see from the definition of the formal Demazure elements (Definition~\ref{def:Demazure-element}) that, for all $w \in W$,
  \[ \ts
    \iDelta_w = \sum_{v\, :\, \ell(v) \le \ell(w)} \delta_v a_w,
  \]
  where the sum is over elements $v \in W$ with length less than or equal to the length of $w$, $a_v \in Q^F$ for all $v$, and $a_w \ne 0$.  Thus, since $\{\delta_w\ |\ w \in W\}$ is a basis for $Q_W^F$ as a right (or left) $Q^F$-module, we see that $\{\iDelta_w\ |\ w \in W\}$ is also a basis for this module.  In particular, the set $\{\iDelta_w\ |\ w \in W\}$ is linearly independent over $Q^F$ and hence over $\tilde R$ or $R \llbracket \Lambda \rrbracket^\sim_F$.
\end{proof}

\begin{theo} \label{thm:Demazure-relations}
  Given a formal group law $(R,F)$, the formal affine Demazure algebra $\bD_{F}$ is generated as an $R$-algebra by $\FA$ and the formal Demazure elements $\iDelta_i$, $i \in I$, and satisfies the following relations:
  \begin{enumerate}
    \item \label{thm-item:Dem-affine-relation} $\varphi \iDelta_i = \iDelta_i s_i(\varphi)+ \Delta_{\alpha_i}(\varphi)$ for all $i\in I$ and $\varphi \in \FA$;

    \item $\iDelta_i^2=\iDelta_i \kappa_i$ for all $i\in I$, where $\kappa_i=\tfrac{1}{x_i}+\tfrac{1}{x_{-i}}\in \FA$;

    \item $\iDelta_i\iDelta_j=\iDelta_j\iDelta_i$ for all $i,j \in I$ such that $\left< \alpha_i^\vee, \alpha_j \right> = 0$;

    \item the braid relations of Proposition~\ref{prop:FDA-braid-relations} for all $i,j \in I$ such that $\left< \alpha_i^\vee, \alpha_j \right> \ne 0$.
  \end{enumerate}
  Furthermore, if $m_{ij} \ne 6$ for all $i,j \in I$, then the above form a complete set of relations.  In all cases, they form a complete set of relations (over $R \llbracket \Lambda \rrbracket^\sim_F$) for $\bD_F \otimes_{\FA} R \llbracket \Lambda \rrbracket^\sim_F$.
\end{theo}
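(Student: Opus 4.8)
The plan is to verify that $\bD_F$ is generated by the stated elements, that these elements satisfy the four families of relations, and then to prove completeness by a dimension/basis count. The generation claim is immediate from the definition of $\bD_F$ as the $R$-subalgebra of $Q_W^F$ generated by $\FA$ and the $\iDelta_i$. Relation~\eqref{thm-item:Dem-affine-relation} is Lemma~\ref{lem:Dcomm} specialized to $\psi = \varphi \in \FA$ and $\alpha = \alpha_i$; relation~(b) is~\eqref{eq:Dem-elem-squared}; relation~(c) is Proposition~\ref{prop:FDA-braid-relations}\eqref{prop-item:Delta_i-nonadjacent}; and relation~(d) is the remaining content of Proposition~\ref{prop:FDA-braid-relations}. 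So the only real work is completeness.

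For completeness, I would proceed as follows. Let $\mathcal A$ be the abstract $R$-algebra presented by generators $\{x_\lambda\}_{\lambda\in\Lambda}$ (subject to the defining relations of $\FA$) together with symbols $\bar\iDelta_i$, $i\in I$, modulo the relations (a)--(d), and let $\pi\colon \mathcal A \to \bD_F$ be the canonical surjection. Using relations (a)--(d) as rewriting rules — (a) to move elements of $\FA$ to the right past any $\bar\iDelta_i$ at the cost of lower-order Demazure terms, (b) to collapse repeated adjacent indices, and (c),(d) to reduce any word in the $\bar\iDelta_i$ to reduced words $\bar\iDelta_w$ plus strictly shorter terms — one shows that $\mathcal A$ is spanned as a left $\FA$-module by $\{\bar\iDelta_w : w\in W\}$. (This is exactly the argument in the proof of Lemma~\ref{lem:iDelta-basis}, run in $\mathcal A$ rather than in $Q_W^F$.) Since Lemma~\ref{lem:iDelta-basis} shows that the images $\iDelta_w$ form a free basis of $\bD_F \otimes_{\FA} R\llbracket\Lambda\rrbracket^\sim_F$ over $R\llbracket\Lambda\rrbracket^\sim_F$, and more precisely that they are left-$Q^F$-linearly independent in $Q_W^F$, one concludes that after base change to $R\llbracket\Lambda\rrbracket^\sim_F$ the spanning set becomes a basis on both sides, so $\pi\otimes_{\FA} R\llbracket\Lambda\rrbracket^\sim_F$ is an isomorphism; this gives the last assertion of the theorem. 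When $m_{ij}\neq 6$ for all $i,j$ one has $R\llbracket\Lambda\rrbracket^\sim_F = \FA$ by definition, so no base change is needed and $\pi$ itself is an isomorphism, giving the "complete set of relations" claim in that case.

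The step I expect to be the main obstacle is showing that the rewriting process using (a)--(d) actually terminates and produces an $\FA$-spanning set by the $\iDelta_w$ inside the \emph{abstract} algebra $\mathcal A$, i.e.\ that one never needs coefficients outside $\FA$ (equivalently $R\llbracket\Lambda\rrbracket^\sim_F$) — this is why the coefficients $\eta_w^{ji}$, and in the $m_{ij}=6$ case the auxiliary $\xi_{ij},\xi_{ji}$, were collected into $\tilde R$ and $R\llbracket\Lambda\rrbracket^\sim_F$ in the first place. One needs a length/ordering argument on words in the $\bar\iDelta_i$ ensuring confluence of the reductions coming from (b),(c),(d), with (a) handling the $\FA$-part. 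The subtlety in the $m_{ij}=6$ case is precisely that the braid-type relation~\eqref{eq:Dem-G2-braid-relation} has coefficients a priori only in $Q^F$, so the rewriting lands in $\bD_F \otimes_{\FA} R\llbracket\Lambda\rrbracket^\sim_F$ and one cannot do better without resolving the conjecture in Remark~\ref{rem:type-G}; this is why the unconditional completeness statement is restricted to $\bD_F \otimes_{\FA} R\llbracket\Lambda\rrbracket^\sim_F$.
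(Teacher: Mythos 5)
Your overall strategy matches the paper's: present an abstract algebra by generators and relations, obtain a surjection onto (a base change of) $\bD_F$, and conclude injectivity from Lemma~\ref{lem:iDelta-basis}. The paper's argument is almost verbatim what you describe, including the observation that the relations let one reduce any element to the form $\sum_{w\in W}\iDelta'_w a_w$, and that the linear independence of the $\iDelta_w$ over $Q^F$ (Lemma~\ref{lem:iDelta-basis}) forces $a_w=0$. One minor distinction: the paper does not explicitly invoke a confluence/termination argument for the rewriting; it simply asserts, as a consequence of relations (a)--(d), that every element of $\tilde\bD_F$ can be written in the form $\sum_w \iDelta'_w a_w$. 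Your caution here is reasonable but ends up being the same assertion.

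However, there is a setup issue in your proposal that the paper avoids. You define $\mathcal A$ with $\FA$ as the coefficient ring (generators $x_\lambda$ subject to the defining relations of $\FA$, plus the $\bar\iDelta_i$), modulo relations (a)--(d). When $m_{ij}=6$, the coefficients $\eta_w^{ji}$ (in particular the $\xi_{ij},\xi_{ji}$) are a priori only in $Q^F$, so relation (d) cannot even be \emph{imposed} on an algebra whose scalar part is $\FA$; the presentation is ill-defined. Consequently the assertion that ``$\mathcal A$ is spanned as a left $\FA$-module by $\{\bar\iDelta_w\}$'' cannot be made in the general case, and one cannot save this by base-changing afterwards since the object being base-changed is not well-defined. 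The paper's fix is exactly to take the coefficient ring to be $R\llbracket\Lambda\rrbracket^\sim_F$ from the outset: $\tilde\bD_F$ is the $R$-algebra generated by $R\llbracket\Lambda\rrbracket^\sim_F$ and $\iDelta'_i$ subject to relations (a)--(d), and the surjection is $\rho\colon \tilde\bD_F \twoheadrightarrow \bD_F\otimes_{\FA}R\llbracket\Lambda\rrbracket^\sim_F$. You identify the underlying difficulty correctly in your final paragraph, but your $\mathcal A$ should simply \emph{be} the paper's $\tilde\bD_F$; with that correction (and noting that $R\llbracket\Lambda\rrbracket^\sim_F=\FA$ when $m_{ij}\ne 6$, so there the two constructions coincide and no base change is needed), your argument agrees with the paper's.
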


\begin{proof}
  The first part of the theorem follows immediately from~\eqref{eq:Dem-elem-squared}, Lemma~\ref{lem:Dcomm} and Proposition~\ref{prop:FDA-braid-relations}.

  Since $R$ is a domain, so is $R \llbracket \Lambda \rrbracket^\sim_F$.  Let $\tilde \bD_F$ be the $R$-algebra generated by $R \llbracket \Lambda \rrbracket^\sim_F$ and elements $\iDelta_i'$, $i \in I$, subject to the relations given in the theorem.  Then we have a surjective ring homomorphism $\rho: \tilde \bD_F \to \bD_F \otimes_{\FA} R \llbracket \Lambda \rrbracket^\sim_F$ which is the identity on $R \llbracket \Lambda \rrbracket^\sim_F$ and maps $\iDelta_i'$ to $\iDelta_i$.  We wish to show that this map is an isomorphism.

  For $w \in W$, define $\iDelta_w'$ as in~\eqref{eq:iDelta_w-def}.  The relations among the $\iDelta'_w$ allow us to write any element of $\tilde \bD_F$ in the form
  \[ \ts
    \sum_{w \in W} \iDelta_w' a_w,\quad a_w \in R \llbracket \Lambda \rrbracket^\sim_F.
  \]
  Suppose such an element is in the kernel of $\rho$.  Then
  \[ \ts
    0 = \rho \left( \sum_{w \in W} \iDelta_w' a_w \right) = \sum_{w \in W} \iDelta_w a_w.
  \]
  By Lemma~\ref{lem:iDelta-basis}, this implies that $a_w=0$ for all $w$.  Thus $\rho$ is injective and hence an isomorphism.  This completes the proof of the proposition once we recall that $R \llbracket \Lambda \rrbracket^\sim_F\, \cong \FA$ in simply laced type.
\end{proof}

\begin{rem}
  We expect that the relations of Theorem~\ref{thm:Demazure-relations} in fact form a complete set of relations for $\bD_F$ in all cases (see Remark~\ref{rem:type-G}).
\end{rem}

%%%%%%%%%%%%%%%%%%%%%%%%%%%%%%%%%%%%%%%%%%%%%%%%%%%%%%%%%%%%%%%%
%
\section{Formal (affine) Demazure algebras: examples and further properties} \label{sec:FDAs-egs}
%
%%%%%%%%%%%%%%%%%%%%%%%%%%%%%%%%%%%%%%%%%%%%%%%%%%%%%%%%%%%%%%%%

In this section we specialize the definition of the formal (affine) Demazure algebra to various {\fgl}s.  We then prove several important facts about these algebras in general.  The first proposition demonstrates that our definition recovers classical objects in the additive and multiplicative cases.

\begin{prop} \label{prop:FDA-add-mult}
  \begin{enumerate}
    \item \label{prop-item:FADA-additive} For the additive \fgl\ over $R=\Z$, the formal affine Demazure algebra $\bD_A$ is isomorphic to the completion of the \emph{nil Hecke ring} of \cite[Def.~4.12]{KoKu86}.  In this case, all the relations among the $\iDelta_i$ are given by the braid relations and the nilpotence relations $\iDelta_i^2=0$.

    \item \label{prop-item:FDA-additive} For the additive \fgl\ over $R=\C$, the formal Demazure algebra $D_A$ is isomorphic to the completion of the nil Hecke ring of \cite[Def.~3]{EvBr87}.

    \item \label{prop-item:FDA-mult} For the multiplicative \fgl\ over $R=\mathbb{C}$ with $\beta=1$, the formal Demazure algebra $D_M$ is the completion of the \emph{0-Hecke algebra}, which is the classical Hecke algebra specialized at $q=0$.  In this case all the relations among the $\iDelta_i$ are given by the braid relations and the idempotence relations.
  \end{enumerate}
\end{prop}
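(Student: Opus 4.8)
The plan is to read off a presentation of each algebra from Theorem~\ref{thm:Demazure-relations} (for the affine algebra $\bD_F$) and from Lemma~\ref{lem:iDelta-basis} (for the non-affine $D_F$), specialise the structure constants to the additive and multiplicative \fgl s, and then match the resulting presentations against the known ones: the nil Hecke ring of Kostant--Kumar \cite{KoKu86}, the nil Hecke ring of Evens--Bressler \cite{EvBr87}, and the $0$-Hecke algebra. The two structure constants that matter are $\kappa_i$, which controls the quadratic relation $\iDelta_i^2=\iDelta_i\kappa_i$, and the defect coefficients $\eta^{ji}_w$ of Proposition~\ref{prop:FDA-braid-relations}, whose vanishing turns the braid-type relations into the ordinary braid relations.

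First I would specialise the structure constants. By Lemma~\ref{lem:kappa-eq-zeo} together with $\mu_A(u)=1$, for the additive \fgl\ one has $\kappa^A_\alpha=0$, so the quadratic relation becomes the nilpotence relation $\iDelta_i^2=0$. For the multiplicative \fgl, $\mu_M(u)^{-1}=1-\beta u$ (Example~\ref{FGL:muexamples}\eqref{example-item:multiplicative-FGL}) gives $1/x_{-\alpha}=\beta-1/x_\alpha$ in $\FA$, hence $\kappa^M_\alpha=\beta$; taking $\beta=1$ turns the quadratic relation into the idempotence relation $\iDelta_i^2=\iDelta_i$. The same substitutions ($x_{-\lambda}=-x_\lambda$, resp.\ $1/x_{-\lambda}=\beta-1/x_\lambda$) show that every element $\kappa_{\lambda,\nu}$ of~\eqref{eq:kappa-lambda-mu} vanishes (consistently with Remark~\ref{kijzero} in the case of $\kappa_{i,j}$), so every coefficient $\eta^{ji}_w$ of Proposition~\ref{prop:FDA-braid-relations} with $m_{ij}\le 4$ is zero. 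For $m_{ij}=6$ I would argue indirectly: in these cases $\Delta^F_\alpha$ is the classical Demazure operator of \cite[\S3 and~\S9]{Dem73}, which satisfies the braid relations, so the left-hand side of~\eqref{eq:Demazure-general-braid-relation} is zero; since the $\iDelta_w$ are linearly independent over $Q^F$ (from the proof of Lemma~\ref{lem:iDelta-basis}), all the $\eta^{ji}_w$, and in particular $\xi_{ij}$ and $\xi_{ji}$, must vanish as well. Consequently $R\llbracket\Lambda\rrbracket^\sim_F=\FA$ and $\tilde R=R$ for $F\in\{F_A,F_M\}$ in every type, so Theorem~\ref{thm:Demazure-relations} yields a complete presentation of $\bD_F$ (generators $\FA$ and the $\iDelta_i$, $i\in I$; relations: the affine relation~\eqref{thm-item:Dem-affine-relation}, the nilpotence resp.\ idempotence relation, and the ordinary braid relations), and by Lemma~\ref{lem:iDelta-basis} the algebra $D_F$ is free over $R$ with basis $\{\iDelta_w\}_{w\in W}$, the only relations among the $\iDelta_i$ being the nilpotence/idempotence and braid relations.

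The identifications are then bookkeeping. For~\eqref{prop-item:FADA-additive}: by Example~\ref{eg:FGAs}, $\Z\llbracket\Lambda\rrbracket_A$ is the $\mathcal{I}_A$-adic completion of the symmetric algebra $\Z[\Lambda]_A$ that plays the role of the polynomial part in \cite[\S4]{KoKu86}; the Kostant--Kumar nil Hecke ring is the subalgebra of their twisted group algebra generated by this symmetric algebra and the Demazure elements $\iDelta_i$, with presentation (affine relation, nilpotence, braid relations) by \cite[Thm.~4.6 and Def.~4.12]{KoKu86}. This is exactly the specialised presentation above with the coefficient ring replaced by its completion, so $\bD_A$ is the completion of the Kostant--Kumar nil Hecke ring, and the asserted description of the relations among the $\iDelta_i$ is immediate. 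For~\eqref{prop-item:FDA-additive}: over $R=\C$ we have $\tilde R=\C$, so $D_A$ is the $\C$-algebra on generators $\iDelta_i$ subject to nilpotence and braid relations, which is (the completion of) the nil Hecke ring of \cite[Def.~3]{EvBr87}. For~\eqref{prop-item:FDA-mult}: likewise $D_M$ is the $\C$-algebra on the $\iDelta_i$ subject to idempotence and braid relations; setting $T_i=-\iDelta_i$ (or $T_i=\iDelta_i$, depending on the chosen normalisation of the quadratic Hecke relation) identifies it with the $q=0$ specialisation of the Iwahori--Hecke algebra, i.e.\ the $0$-Hecke algebra, both sides being free of rank $|W|$ on $\{\iDelta_w\}$ resp.\ $\{T_w\}$.

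The main obstacle is the rank-$6$ (type $G_2$) case: one must know that the genuinely new coefficients $\xi_{ij},\xi_{ji}$ of Proposition~\ref{prop:FDA-braid-relations}\eqref{prop-item:Delta-G2-adjacent} vanish for $F_A$ and $F_M$, so that Theorem~\ref{thm:Demazure-relations} supplies a complete presentation of $\bD_F$ itself rather than only of $\bD_F\otimes_{\FA}R\llbracket\Lambda\rrbracket^\sim_F$. I would settle this by the indirect route above (classical braid relations for the Demazure operator, plus linear independence of the $\iDelta_w$ over $Q^F$) rather than by a direct manipulation of the $\xi$'s, although such a computation is available using $x_{-\lambda}=-x_\lambda$ (additive) and $1/x_{-\lambda}=\beta-1/x_\lambda$ (multiplicative) and reduces to a partial-fractions identity in two variables. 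A secondary, purely expository point is to be careful with the word ``completion'': the power-series coefficient ring $\FA$ must be matched with the polynomial coefficient rings of \cite{KoKu86,EvBr87} via $\mathcal{I}_F$-adic completion, and one should check that the Demazure elements and all relations correspond under this identification; in~\eqref{prop-item:FDA-additive} and~\eqref{prop-item:FDA-mult} the algebras are finite-dimensional over $\C$, so there the completion is harmless.
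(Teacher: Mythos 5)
Your proof is correct in outline, but it takes a substantially longer route than the paper's.  The paper's proof is a direct dictionary: it identifies the ambient twisted group algebras, matches the formal Demazure elements with the generators of Kostant--Kumar and Evens--Bressler by name (recognizing $\iDelta_i$ as the $-x_i$ of \cite[($\text{I}_{24}$)]{KoKu86} for part (a), as the $X_i$ of \cite{EvBr87} for (b), and $-\iDelta_{\alpha_i}$ as the $B_i$ of \cite[\S2]{EvBr87} for (c)), and lets the isomorphism follow at once.  You instead deduce a full presentation of $\bD_F$ and $D_F$ from Theorem~\ref{thm:Demazure-relations} and Lemma~\ref{lem:iDelta-basis}, show that the braid-defect coefficients $\eta^{ji}_w$ vanish when $F$ is additive or multiplicative, and then compare presentations.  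Both arguments are valid; the paper's is shorter and requires no presentation theorem, while yours has the merit of making explicit \emph{why} the braid relations hold on the nose for these two \fgl s (via Remark~\ref{kijzero} and the vanishing of $\kappa_{\lambda,\nu}$), which is left implicit in the paper's translation argument.

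One point worth making precise: your indirect treatment of the $m_{ij}=6$ case rests on (i) the braid relation for classical Demazure operators as operators on $\FA$, and (ii) faithfulness of the natural action of $Q_W^F$ on $Q^F$ in order to upgrade an operator identity to an identity of elements of $Q_W^F$.  Point (ii) is not stated in the paper, though it follows from Artin's theorem on linear independence of characters (distinct $W$-twists of the identity of $Q^F$ are $Q^F$-linearly independent).  You should say this explicitly, since the paper's Lemma~\ref{lem:iDelta-basis} gives linear independence of $\{\iDelta_w\}$ over $Q^F$ \emph{inside} $Q_W^F$, which is not the same as faithfulness of the action.  With that supplied, your argument closes.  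Also, for parts (b) and (c) the word ``completion'' in the statement is somewhat vacuous since $D_F$ is free of rank $|W|$ over $R$ once $\tilde R=R$; you note this, and it matches the spirit of the paper's proof, which simply identifies generators without invoking any completion.
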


\begin{proof}
  For the additive \fgl\ over $R=\Z$, $Q^A$ is a subring of the field of fractions of the ring $S^*(\Lw)^\wedge \cong R \llbracket \Lambda \rrbracket_A$ and $\iDelta_i$ is the $-x_i$ of \cite[($\text{I}_{24}$)]{KoKu86}.  This proves part~\eqref{prop-item:FADA-additive}.  Similarly, if $R=\C$, then our $\iDelta_i$ corresponds to the $X_i$ of \cite{EvBr87}, proving part~\eqref{prop-item:FDA-additive}.

  For the multiplicative \fgl\ over $R=\C$ with $\beta=1$, we have that $Q^M$ is a subring of the field of  fractions of $\C[\Lw]^\wedge$ and $-\iDelta_{\alpha_i}$ is the $B_i$ of \cite[\S2]{EvBr87} if we identify the simple root $\alpha_i$ of \cite{EvBr87} with our $-\alpha_i$.  This proves part~\eqref{prop-item:FDA-mult}.
\end{proof}

We now consider some other {\fgl}s, where our definition appears to give new algebras.

\begin{example}[Lorentz affine Demazure algebra]
  Consider the Lorentz \fgl\ $(R,F_L)$. Then $x_{i+j} = \tfrac{x_i +x_j}{1+\beta x_ix_j}$, $\beta\in R$.  Since $x_{-\lambda}=-x_\lambda$ for all $\lambda \in \Lw$, we have
  \begin{equation} \label{eq:kappa-ij-Lorentz}
    \kappa_{i,j}=\tfrac{1+\beta x_i x_j}{x_i+x_j}\cdot
    \tfrac{x_i+x_j}{x_ix_j}-\tfrac{1}{x_ix_j}=\beta
  \end{equation}
  for $i,j \in I$ with $\left< \alpha_i^\vee, \alpha_j \right>=-1$.  Therefore, relation~\eqref{eq:Dem-A-braid-relation} becomes
  \[
    \iDelta_j\iDelta_i\iDelta_j-\iDelta_i\iDelta_j\iDelta_i=\beta(\iDelta_i-\iDelta_j)\;\text{ for all }i,j\text{ such that }\left< \alpha_i^\vee, \alpha_j \right>=-1.
  \]
\end{example}

\begin{example}[Elliptic affine Demazure algebra]
  Consider the elliptic \fgl\ $(R,F_E)$.  Set $\mu_i = \mu_E(x_i)$ and $g_{ij}^E = g^E(x_i,x_j)$.  Then, for $i,j \in I$ with $\langle \alpha_i^\vee, \alpha_j \rangle = -1$, we have
  \begin{multline*}
    \kappa_{i,j} = \tfrac{x_i ( x_{-i} - x_j) - x_{i+j} x_{-i}}{x_i x_{-i} x_j x_{i+j}}
    = \tfrac{x_{-i} - x_j + x_{i+j} \mu_i}{x_{-i} x_j x_{i+j}}
    = \tfrac{x_{-i} - x_j + \mu_i ( x_i + x_j - x_i x_j g^E_{ij})}{x_{-i} x_j x_{i+j}}
    = \tfrac{- x_j + \mu_i x_j - \mu_i x_i x_j g^E_{ij}} {x_{-i} x_j x_{i+j}} \\
    = \tfrac{\mu_i - 1 - \mu_i x_i g^E_{ij}}{x_{-i} x_{i+j}}
    = \tfrac{\mu_{-i}^{-1} - 1 - \mu_i x_i g^E_{ij}}{x_{-i} x_{i+j}}
    = \tfrac{- a_1 x_{-i} - a_3 v(x_{-i}) + x_{-i} g^E_{ij}}{x_{i+j} x_{-i}}
    = \tfrac{g^E_{ij} - a_1}{x_{i+j}} - \tfrac{a_3 v(x_{-i})}{x_{i+j} x_{-i}}.
\end{multline*}
\end{example}

\begin{theo} \label{thm:Demazure-algebra-isom}
  For any two formal group laws $(R,F)$ and $(R,F')$ over the same ring $R$, we have $\bD_{R_\Q,F} \cong \bD_{R_\Q,F'}$ as algebras, where $R_\Q = R \otimes_\Z \Q$.
\end{theo}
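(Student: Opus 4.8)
The plan is to exploit the fact recalled in Section~\ref{sec:formal-group-rings} that, after tensoring with $\Q$, every one-dimensional commutative formal group law becomes isomorphic to the additive one via its exponential series. Write $R_\Q = R \otimes_\Z \Q$ and set $\psi := e_F \circ e_{F'}^{-1} \colon (R_\Q, F') \to (R_\Q, F)$, which is an isomorphism of {\fgl}s since $e_F$ and $e_{F'}$ are. By the functoriality of the formal group algebra under morphisms of {\fgl}s (statement~(1) after Definition~\ref{def:FGA}), $\psi$ induces a continuous $R_\Q$-algebra isomorphism $\psi^\star \colon R_\Q\llbracket\Lambda\rrbracket_F \xrightarrow{\sim} R_\Q\llbracket\Lambda\rrbracket_{F'}$ determined by $x_\lambda \mapsto \psi(x_\lambda)$, with inverse $\phi^\star$ where $\phi = \psi^{-1}$. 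The goal is then to promote $\psi^\star$ to an isomorphism of the twisted formal group algebras $Q_W^F$ (Definition~\ref{def:twisted-FGA}) and to check that it carries the formal affine Demazure subalgebra onto its counterpart.

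For the first step I would argue as follows. Since $\psi(u) = u + O(u^2)$, for each $\lambda \in \Lambda \setminus \{0\}$ we may write $\psi(x_\lambda) = x_\lambda\, \theta_\lambda$ with $\theta_\lambda \in R_\Q\llbracket\Lambda\rrbracket_{F'}$ of constant term $1$, hence a unit in the complete ring $R_\Q\llbracket\Lambda\rrbracket_{F'}$; consequently $\psi(x_\lambda)^{-1} = x_\lambda^{-1}\theta_\lambda^{-1} \in Q^{F'}$, and symmetrically for $\phi$. Therefore $\psi^\star$ extends through the fraction fields and restricts to a ring isomorphism $Q^F \xrightarrow{\sim} Q^{F'}$. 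Moreover $\psi^\star$ is $W$-equivariant: on the generators $x_\lambda$ one has $\psi^\star(w(x_\lambda)) = \psi(x_{w(\lambda)}) = w(\psi(x_\lambda)) = w(\psi^\star(x_\lambda))$, because $w$ acts by a ring automorphism fixing the coefficients of the series $\psi$. Combining these, the assignment $\delta_w \mapsto \delta_w$ together with $\psi^\star$ on $Q^F$ defines an $R_\Q$-algebra isomorphism $\widetilde\psi \colon Q_W^F \xrightarrow{\sim} Q_W^{F'}$; compatibility with the smash-product multiplication $(\delta_{w'}\psi')(\delta_w\psi) = \delta_{w'w}\, w^{-1}(\psi')\psi$ is precisely the $W$-equivariance just verified.

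For the second step, recall that $\bD_{R_\Q,F}$ is generated over $R_\Q$ by $R_\Q\llbracket\Lambda\rrbracket_F$ and the formal Demazure elements $\iDelta_i^F = \tfrac{1}{x_i}(1 - \delta_{s_i})$ (Definition~\ref{def:Demazure-element}). The map $\widetilde\psi$ carries $R_\Q\llbracket\Lambda\rrbracket_F$ onto $R_\Q\llbracket\Lambda\rrbracket_{F'} \subseteq \bD_{R_\Q,F'}$, and
\[
  \widetilde\psi(\iDelta_i^F) \;=\; \tfrac{1}{\psi(x_i)}(1 - \delta_{s_i}) \;=\; \theta_i^{-1}\,\iDelta_i^{F'} \;\in\; \bD_{R_\Q,F'},
\]
since $\theta_i^{-1}$ is a unit of $R_\Q\llbracket\Lambda\rrbracket_{F'}$. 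Hence $\widetilde\psi(\bD_{R_\Q,F}) \subseteq \bD_{R_\Q,F'}$, and applying the same computation to $\widetilde\psi^{-1} = \widetilde\phi$ gives the opposite inclusion; thus $\widetilde\psi$ restricts to the desired $R_\Q$-algebra isomorphism $\bD_{R_\Q,F} \cong \bD_{R_\Q,F'}$.

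I expect the only point requiring care to be the first step — checking that $\psi^\star$ is simultaneously compatible with the localization defining $Q^F$ and with the $W$-action, so that it genuinely extends to the smash product — rather than any serious computation; the rest is bookkeeping. It is also worth observing that $\widetilde\psi$ does not send $\iDelta_i^F$ to $\iDelta_i^{F'}$ on the nose but only up to the unit $\theta_i$, which is why the affine versions (containing the formal group algebra) appear in the statement, whereas the analogous claim for the non-affine algebras $D_F$ does not follow from this argument.
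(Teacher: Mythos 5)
Your argument is correct and follows essentially the same route as the paper's proof: both rest on the exponential isomorphism of formal group laws over $R_\Q$, the induced (contravariant) isomorphism of formal group algebras, its compatibility with the $W$-action and the localization defining $Q^F$, and the observation that the image of $\iDelta_i^F$ differs from $\iDelta_i^{F'}$ only by a unit of the formal group algebra. The paper streamlines the bookkeeping by first reducing to $F' = F_A$ and then showing that $e_F^\star(\iDelta_i^F) = \tfrac{x_i}{e_F(x_i)}\iDelta_i^A$, whereas you work with $\psi = e_F\circ e_{F'}^{-1}$ directly; these are equivalent, and your closing remark that the non-affine algebras $D_F$ are not obviously preserved under this isomorphism is a fair observation that the paper does not make explicit.
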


\begin{proof}
  It suffices to prove the result for the special case where $F'=F_A$.  There is an isomorphism of {\fgl}s
  \[
    e_F : (R_\Q,F_A) \to (R_\Q,F)
  \]
  given by the exponential series $e_F (u) \in R_\Q \llbracket u \rrbracket$ (see Section~\ref{sec:formal-group-rings}).  This induces an isomorphism of formal group algebras
  \[
    e_F^\star : R_\Q \llbracket \Lambda \rrbracket_F \to R_\Q \llbracket \Lambda \rrbracket_A.
  \]
  This map commutes with the action of $W$ and thus we have an induced isomorphism of twisted formal group algebras
  \[
    e_F^\star : Q_W^{(R_\Q,F)} \to Q_W^{(R_\Q,A)}.
  \]
  Thus $\bD_{R_\Q,F}$ is isomorphic to its image $D' := e_F^\star(\bD_{R_\Q,F})$ under this map. Now, $D'$ is generated over $R_\Q \llbracket \Lambda \rrbracket_A$ by the elements
  \[
    e_F^\star(\Delta_i^F) = \tfrac{1}{e_F(x_i)} (1-\delta_i) = \tfrac{x_i}{e_F(x_i)} \Delta_i^A,\quad i \in I.
  \]
  Since $e_F(x_i)/x_i \in R_\Q \llbracket \Lambda \rrbracket_A$ is
  invertible in $R_\Q \llbracket \Lambda \rrbracket_A$ (because its constant term is invertible in $R_\Q$ -- see Example~\ref{eg:exp-series}\eqref{eg-item:exp-general}), $D'$ is also generated over $R_\Q \llbracket \Lambda \rrbracket_A$ by $\Delta_i^A$, $i \in I$, and thus isomorphic to $\bD_{R_\Q,A}$.
\end{proof}

\begin{rem} \label{rem:isom-Dem-alg}
  Note that while Theorem~\ref{thm:Demazure-algebra-isom} shows that all affine Demazure algebras are isomorphic when the coefficient ring is $R_\Q$, the isomorphism is \emph{not} the naive one sending $\iDelta_i^F$ to $\iDelta_i^{F'}$.  Furthermore, the completion (with respect to the augmentation map) is crucial.  No assertion is made regarding an isomorphism (even over $R_\Q$) of \emph{truncated} versions.
\end{rem}

%%%%%%%%%%%%%%%%%%%%%%%%%%%%%%%%%%%%%%%%%%%%%%%%%%%%%%%%%%%%%%%%
%
\section{Formal (affine) Hecke algebras: definitions} \label{sec:FHAs}
%
%%%%%%%%%%%%%%%%%%%%%%%%%%%%%%%%%%%%%%%%%%%%%%%%%%%%%%%%%%%%%%%%

In the present section we define the formal (affine) Hecke algebras.  Our goal is to construct generalizations of the usual affine Hecke algebra and its degenerate analogue, which are deformations of the affine 0-Hecke algebra and affine nil Hecke algebra respectively.  These two examples will correspond to the multiplicative periodic and additive \fgl\ cases of our more general construction.  We begin by reminding the reader of the definition of these classical objects.

\begin{defin}[Hecke algebra] \label{def:Hecke-algebra}
  The (classical) \emph{Hecke algebra} $H$ associated to the Weyl group $W$ is the $\Z[t,t^{-1}]$-algebra with 1 generated (as a $\Z[t,t^{-1}]$-algebra) by elements $T_i := T_{s_i}$, $i \in I$, modulo
  \begin{enumerate}
    \item \label{def-item:classical-Hecke-quadratic} the quadratic relations $(T_i+t^{-1})(T_i - t)=0$ for all $i \in I$, and
    \item the braid relations $T_i T_j T_i \dotsm = T_j T_i T_j \dotsm$ ($m_{ij}$ factors in both products) for any $i \ne j$ in $I$ with $s_i s_j$ of order $m_{ij}$ in $W$.
  \end{enumerate}
\end{defin}

\begin{defin}[Affine Hecke algebra] \label{def:affine-Hecke-algebra}
  The (classical) \emph{affine Hecke algebra} $\bH$ is $H \otimes_{\Z[t,t^{-1}]} \Z[t,t^{-1}][\Lambda]$, where the factors $H$ and $\Z[t,t^{-1}][\Lambda]$ are subalgebras, and the relations between the two factors are given by
  \[ \ts
    e^\lambda T_i - T_i e^{s_i(\lambda)} = (t-t^{-1}) \frac{e^\lambda - e^{s_i(\lambda)}}{1 - e^{-\alpha_i}},\quad \lambda \in \Lambda,\ i \in I.
  \]
\end{defin}

\begin{rem} \label{rem:Hecke-conventions}
  In Defintions~\ref{def:Hecke-algebra} and~\ref{def:affine-Hecke-algebra}, we have followed the conventions found, for instance, in \cite[pp.~71--72]{CMHL02} (except that we use $t$ in place of $v$ and $T_i$ in place of $\tilde T_i$).  These conventions differ somewhat from those found in other places in the literature.  For instance, $H$ as defined above is isomorphic to $H' \otimes_{\Z[q,q^{-1}]} \Z[q^{1/2},q^{-1/2}]$, where $H'$ is the Hecke algebra as defined in \cite[\S7.4]{Hum90} or \cite[Def.~7.1.1]{CG10}.  The $T_{s_i}$ appearing in \cite{Hum90,CG10} correspond to $tT_{s_i}$ in our notation, where $t$ corresponds to $q^{1/2}$.
\end{rem}

\begin{defin}[Degenerate affine Hecke algebra]
  Let $\epsilon$ be an indeterminate.  The \emph{degenerate affine Hecke algebra} $\bH_\mathrm{deg}$ is the unital associative $\Z[\epsilon]$-algebra that is $\Z[W] \otimes_\Z S_{\Z[\epsilon]}^*(\Lambda)$ as a $\Z[\epsilon]$-module and such that the subspaces $\Z[W]$ and $S_{\Z[\epsilon]}^*(\Lambda)$ are subalgebras and the following relations hold:
  \[
    \delta_i \lambda - s_i(\lambda) \delta_i = - \epsilon \langle \alpha_i^\vee, \lambda \rangle,\quad i \in I,\ \lambda \in \Lambda.
  \]
\end{defin}

Fix a free abelian group $\Gamma$ of rank $1$ with generator $\gamma$.  Denote by $R_F$ the formal group algebra $R \llbracket \Gamma \rrbracket_F$.  For instance, $\Z_M \cong \Z[t,t^{-1}]^\wedge$ and $\Z_A \cong
\Z\llbracket \gamma\rrbracket$ (see Example~\ref{eg:FGL-torus}).  Let $Q' := Q^{(R_F,F)}$ denote the subring of the fraction field of $R_F\llbracket \Lw \rrbracket_F$ generated by $R_F \llbracket \Lw \rrbracket_F$, $\{x_\lambda^{-1}\ |\ \lambda \in \Lambda \setminus \{0\}\}$ and $\{(\kappa^F_\alpha)^{-1}\ |\ \alpha \in \Phi,\ \kappa^F_\alpha \ne 0\}$.  Let $Q_W' \cong R_F[W] \ltimes_R Q'$ be the respective twisted formal group algebra over $R_F$ (see Definition~\ref{def:twisted-FGA}).  We will continue to use the shorthand~\eqref{eq:subscript-shorthand}.  We are now ready to define our second main objects of study.

\begin{defin}[Formal (affine) Hecke algebra] \label{def:formal-HA}
  The \emph{formal Hecke algebra} $H_{F}$ is the $R_{F}$-subalgebra of $Q_W'$
  generated by the elements
  \begin{equation} \label{eq:T_i-def}
    T_i^F :=
    \begin{cases}
      \iDelta_i^F \frac{\Theta_F}{\kappa_i^F} + \delta_i \mu_F(x_\gamma) & \text{if }\kappa^F \neq 0, \\
      2\iDelta_i^F x_\gamma + \delta_i & \text{if }\kappa^F=0,
    \end{cases}
  \end{equation}
  for all $i \in I$, where $\Theta_F := \mu_F(x_\gamma)-\mu_F(x_{-\gamma})\in R_F$.  The \emph{formal affine Hecke algebra} $\bH_{F}$ is the $R_F$-subalgebra of $Q_W'$ generated by $H_{F}$ and
  \[
    R_F \llbracket \Lambda \rrbracket_F^\kappa :=
    \begin{cases}
      R_F \llbracket \Lambda \rrbracket_F[(\kappa^F_\alpha)^{-1}\ |\ \alpha \in \Phi] & \text{if } \kappa^F \ne 0, \\
      R_F \llbracket \Lambda \rrbracket_F & \text{if } \kappa^F = 0.
    \end{cases}
  \]
  We sometimes write $T_i$ when the \fgl\ is understood from the context.  When we wish to specify the coefficient ring, we write $H_{R,F}$ (resp. $\bH_{R,F})$ for $H_F$ (resp.\ $\bH_F$).
\end{defin}

\begin{rem}
  \begin{enumerate}
    \item If $a_{11}$ (in the notation of~\eqref{equ-fgl}) is invertible in $R$, then $\kappa_\alpha^F$ is invertible in $R_F \llbracket \Lambda \rrbracket_F$ for all $\alpha \in \Phi$.  Thus, $R_F \llbracket \Lambda \rrbracket_F^\kappa = R_F \llbracket \Lambda \rrbracket_F$.

    \item The coefficients $\tfrac{\Theta_F}{\kappa_i}$, $\mu_F(x_\gamma)$, and $x_\gamma - x_{-\gamma}$ appearing in Definition~\ref{def:formal-HA} are all invariant under the action of $s_i$.

    \item In the multiplicative case, we have
        \[
          \tfrac{\Theta_F}{\kappa_i} = \tfrac{\beta(x_\gamma - x_{-\gamma})}{\beta} = x_\gamma - x_{-\gamma}.
        \]
        Since the additive \fgl\ is the $\beta\to 0$ limit of the multiplicative, this motivates the choice of coefficient of $\iDelta_i^F$ in the case $\kappa^F=0$.  More generally, one can show that when $\frac{\Theta_F}{\kappa_i^F}$ is expanded as a power series in $x_i$ and $x_\gamma$, the leading term is equal to $2 x_\gamma$.  Similarly, when $\kappa^F=0$, we have $x_{-\gamma}=-x_{\gamma}$ and so $\mu_F(x_\gamma)=1$, the coefficient of $\delta_i$.
  \end{enumerate}
\end{rem}

\begin{lem} \label{lem:affine-Hecke-rel}
  For all $\psi \in Q'$ and $i \in I$, we have
  \begin{equation} \label{eq:formal-affine-hecke-Tx-relation}
    \psi T_i  - T_i s_i(\psi) =
    \begin{cases}
      \frac{\Theta_F}{\kappa_i} \Delta_{\alpha_i}^F(\psi) & \text{if } \kappa^F \ne 0, \\
      2x_\gamma \Delta_{\alpha_i}^F(\psi) & \text{if } \kappa^F = 0.
    \end{cases}
  \end{equation}
  In particular, $\varphi T_i - T_i s_i(\varphi) \in R_F \llbracket \Lambda \rrbracket_F^\kappa$ for all $\varphi \in R_F \llbracket \Lambda \rrbracket_F^\kappa$.
\end{lem}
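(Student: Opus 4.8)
The plan is to reduce the identity to the commutation relation of Lemma~\ref{lem:Dcomm}, which holds verbatim in $Q_W'$ with the same proof: for $\psi \in Q'$ and $i \in I$,
\[
  \psi \iDelta_i = \iDelta_i s_i(\psi) + \Delta_{\alpha_i}^F(\psi), \qquad \Delta_{\alpha_i}^F(\psi) = x_{\alpha_i}^{-1}\big(\psi - s_i(\psi)\big) \in Q'.
\]
Together with the elementary identity $\psi \delta_i = \delta_i s_i(\psi)$ in $Q_W'$ (immediate from the multiplication rule of Definition~\ref{def:twisted-FGA}) and the commutativity of $Q'$, this is essentially all that is needed. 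I would observe additionally that the coefficients occurring in~\eqref{eq:T_i-def}, namely $\tfrac{\Theta_F}{\kappa_i}$, $\mu_F(x_\gamma)$, and $x_\gamma$, all lie in $Q'$, so that they may be commuted past any element of $Q'$.

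Carrying out the two cases of~\eqref{eq:T_i-def} in parallel: for $\kappa^F \ne 0$, write $T_i = \iDelta_i \tfrac{\Theta_F}{\kappa_i} + \delta_i \mu_F(x_\gamma)$, so that by associativity and the two identities above,
\[
  \psi T_i = (\psi \iDelta_i)\tfrac{\Theta_F}{\kappa_i} + (\psi \delta_i)\mu_F(x_\gamma) = \big(\iDelta_i s_i(\psi) + \Delta_{\alpha_i}^F(\psi)\big)\tfrac{\Theta_F}{\kappa_i} + \delta_i s_i(\psi)\mu_F(x_\gamma).
\]
Commuting $\tfrac{\Theta_F}{\kappa_i}$ and $\mu_F(x_\gamma)$ to the left of $s_i(\psi)$ (using that $Q'$ is commutative) and regrouping gives $\psi T_i = \big(\iDelta_i \tfrac{\Theta_F}{\kappa_i} + \delta_i \mu_F(x_\gamma)\big) s_i(\psi) + \tfrac{\Theta_F}{\kappa_i}\Delta_{\alpha_i}^F(\psi) = T_i s_i(\psi) + \tfrac{\Theta_F}{\kappa_i}\Delta_{\alpha_i}^F(\psi)$, which is the first line of~\eqref{eq:formal-affine-hecke-Tx-relation}. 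For $\kappa^F = 0$ the identical manipulation with $T_i = 2\iDelta_i x_\gamma + \delta_i$, using $x_\gamma \in R_F \subseteq Q'$, yields $\psi T_i - T_i s_i(\psi) = 2 x_\gamma \Delta_{\alpha_i}^F(\psi)$, the second line.

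For the final assertion, take $\varphi \in R_F\llbracket\Lambda\rrbracket_F^\kappa$. If $\kappa^F = 0$ then $R_F\llbracket\Lambda\rrbracket_F^\kappa = R_F\llbracket\Lambda\rrbracket_F$, which $\Delta_{\alpha_i}^F$ maps to itself by Definition~\ref{def:Dem-operator}, and $x_\gamma \in R_F$, so $2 x_\gamma \Delta_{\alpha_i}^F(\varphi) \in R_F\llbracket\Lambda\rrbracket_F$. If $\kappa^F \ne 0$, I would first note that $s_i$ restricts to an automorphism of $R_F\llbracket\Lambda\rrbracket_F^\kappa$ (it permutes $\Phi$, sends $\kappa_\alpha^F \mapsto \kappa_{s_i(\alpha)}^F$, and by Lemma~\ref{lem:kappa-eq-zeo} all $\kappa_\alpha^F$ are nonzero, hence all are inverted), and then invoke the Leibniz rule for $\Delta_{\alpha_i}^F$ (the comment following Lemma~\ref{lem:kappa-eq-zeo}, and \cite[Props.~3.8 and~3.12]{CPZ09}): applied to $1 = \kappa_\alpha^F \cdot (\kappa_\alpha^F)^{-1}$ it gives $\Delta_{\alpha_i}^F\big((\kappa_\alpha^F)^{-1}\big) = -\big(s_i(\kappa_\alpha^F)\,\kappa_\alpha^F\big)^{-1}\Delta_{\alpha_i}^F(\kappa_\alpha^F) \in R_F\llbracket\Lambda\rrbracket_F^\kappa$, and a straightforward Leibniz induction then shows $\Delta_{\alpha_i}^F$ maps $R_F\llbracket\Lambda\rrbracket_F^\kappa$ into itself. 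Since $\tfrac{\Theta_F}{\kappa_i} = \Theta_F \cdot (\kappa_{\alpha_i}^F)^{-1} \in R_F\llbracket\Lambda\rrbracket_F^\kappa$, the product $\tfrac{\Theta_F}{\kappa_i}\Delta_{\alpha_i}^F(\varphi)$ lies there too. The computation itself is routine; the points demanding care are the bookkeeping in $Q_W'$ (which is only an $R_F$-algebra, so left and right multiplication must be tracked carefully) and the stability of $R_F\llbracket\Lambda\rrbracket_F^\kappa$ under $\Delta_{\alpha_i}^F$ -- the one place an external result is genuinely used -- which I expect to be the main, if modest, obstacle.
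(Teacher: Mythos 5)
Your argument is correct and follows the same route as the paper's own proof: apply Lemma~\ref{lem:Dcomm} (and the smash-product rule $\psi\delta_i=\delta_i s_i(\psi)$) term by term to $T_i = \iDelta_i a + \delta_i b$, then use commutativity of $Q'$ to regroup, yielding $\psi T_i = T_i s_i(\psi) + a\,\Delta_{\alpha_i}^F(\psi)$. The paper leaves the final assertion ($\varphi T_i - T_i s_i(\varphi) \in R_F\llbracket\Lambda\rrbracket_F^\kappa$) as "an easy verification left to the reader"; your Leibniz-rule argument (stability of $R_F\llbracket\Lambda\rrbracket_F^\kappa$ under $s_i$ and $\Delta_{\alpha_i}^F$, using Lemma~\ref{lem:kappa-eq-zeo} to know all $\kappa_\alpha^F$ are nonzero when $\kappa^F\neq 0$) supplies exactly the details the paper omits and is a valid way to discharge that verification.
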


\begin{proof}
  Let $a$ and $b$ be the coefficients of $\iDelta_i$ and $\delta_i$ in~\eqref{eq:T_i-def}, so that $T_i = \iDelta_i a + \delta_i b$.  By Lemma~\ref{lem:Dcomm}, we have
  \[
    \psi T_i = \psi (\iDelta_i a + \delta_{s_i} b)=(\iDelta_i s_i(\psi) + \Delta_{\alpha_i} (\psi)) a + \delta_{s_i} s_i(\psi) b = T_i s_i(\psi) +a\Delta_{\alpha_i}(\psi).
  \]
  The last statement is an easy verification left to the reader.
\end{proof}

\begin{lem} \label{lem:T_i-quadratic}
  The elements $T_i$, $i \in I$, satisfy the quadratic relation
  \begin{equation} \label{eq:quadrelat1}
    T_i^2=T_i\Theta_F +1.
  \end{equation}
  Thus $T_i$ is invertible and $T_i^{-1} = T_i -\Theta_F$.  Furthermore
  \begin{equation} \label{eq:quadrelat2}
    (T_i+\mu_F(x_{-\gamma}))(T_i-\mu_F(x_\gamma))=0.
  \end{equation}
\end{lem}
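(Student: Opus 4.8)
The plan is to compute $T_i^2$ directly. The ambient algebra $Q_W'$ is free as a right $Q'$-module on $\{\delta_w\}_{w\in W}$, and $T_i$ (hence $T_i^2$) will lie in the $Q'$-span of $1$ and $\delta_i$, so it suffices to put $T_i$ in the form $p+\delta_i q$ with $p,q\in Q'$ and then match the coefficients of $1$ and $\delta_i$ in $T_i^2$ against those of $\Theta_F T_i+1$. The two cases of Definition~\ref{def:formal-HA} can be run in parallel by writing $T_i=\iDelta_i a+\delta_i c$, where $(a,c)=\bigl(\tfrac{\Theta_F}{\kappa_i},\mu_F(x_\gamma)\bigr)$ if $\kappa^F\neq 0$ and $(a,c)=(2x_\gamma,1)$ if $\kappa^F=0$. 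In either case $a$ and $c$ are fixed by $s_i$ (indeed by all of $W$, which acts trivially on $\Gamma$), hence commute with $\delta_i$ in $Q_W'$.

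First I would rewrite $T_i$ in the desired form. Using $\iDelta_i=\tfrac{1}{x_i}-\delta_i\tfrac{1}{x_{-i}}$ and the $s_i$-invariance of $a$, one gets $\iDelta_i a=p-\delta_i s_i(p)$ with $p:=\tfrac{a}{x_i}$ and $s_i(p)=\tfrac{a}{x_{-i}}$, and therefore $T_i=p+\delta_i q$ with $q:=c-s_i(p)$. Expanding with the smash-product rule $\psi\delta_i=\delta_i s_i(\psi)$ and $\delta_i^2=\delta_e=1$ then yields $T_i^2=\bigl(p^2+s_i(q)q\bigr)+\delta_i\bigl(s_i(p)q+qp\bigr)$, while $\Theta_F T_i+1=(\Theta_F p+1)+\delta_i(\Theta_F q)$ since $\Theta_F\in R_F$ is central in $Q_W'$.

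The computation then closes using two identities. The first is $p+s_i(p)=a\bigl(\tfrac1{x_i}+\tfrac1{x_{-i}}\bigr)=a\kappa_i=\Theta_F$: this is the definition of $a$ when $\kappa^F\neq 0$, and when $\kappa^F=0$ both $\kappa_i$ and $\Theta_F$ vanish by Lemma~\ref{lem:kappa-eq-zeo}, so it holds trivially. The second is $c^2-c\Theta_F=\mu_F(x_\gamma)\mu_F(x_{-\gamma})=1$, which follows from $\Theta_F=\mu_F(x_\gamma)-\mu_F(x_{-\gamma})$ and Lemma~\ref{lem:murel} (and is immediate in the $\kappa^F=0$ case, where $\mu_F\equiv 1$). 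Since $Q'$ is commutative, the first identity gives $s_i(p)q+qp=q\bigl(p+s_i(p)\bigr)=q\Theta_F$, matching the $\delta_i$-coefficient; and, using $s_i(q)=c-p$, it combines with the second to give $p^2+s_i(q)q=p^2+c^2-c\Theta_F+p\,s_i(p)=1+p\bigl(p+s_i(p)\bigr)=1+p\Theta_F$, matching the coefficient of $1$. This proves $T_i^2=T_i\Theta_F+1$.

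The remaining assertions are formal. As $\Theta_F$ is central, $T_i$ and $T_i-\Theta_F$ commute, and $T_i^2=T_i\Theta_F+1$ rearranges to $T_i(T_i-\Theta_F)=(T_i-\Theta_F)T_i=1$, so $T_i^{-1}=T_i-\Theta_F$. Finally, expanding $(T_i+\mu_F(x_{-\gamma}))(T_i-\mu_F(x_\gamma))$ and using $\mu_F(x_{-\gamma})-\mu_F(x_\gamma)=-\Theta_F$ together with $\mu_F(x_\gamma)\mu_F(x_{-\gamma})=1$ (Lemma~\ref{lem:murel}) turns it into $T_i^2-\Theta_F T_i-1$, which is $0$ by~\eqref{eq:quadrelat1}. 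There is no genuine obstacle here beyond the non-commutative bookkeeping in $Q_W'$ and keeping the two cases in sync; the only nontrivial input is the constant-term identity, which is where $\mu_F(x_\gamma)\mu_F(x_{-\gamma})=1$ is used.
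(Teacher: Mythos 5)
Your proof is correct and amounts to the same direct computation as the paper's, but with a different bookkeeping choice. The paper keeps $T_i=\iDelta_i a+\delta_i b$ in terms of the Demazure element and reuses two structural facts — the relation $\iDelta_i^2=\iDelta_i\kappa_i$ from~\eqref{eq:Dem-elem-squared} and the anticommutator identity $\iDelta_i\delta_i+\delta_i\iDelta_i=(\delta_i-1)\kappa_i$ — to write $T_i^2=T_i(\kappa_i^F a)+b(b-\kappa_i^F a)$ directly, and then checks $\kappa_i^F a=\Theta_F$ and $b(b-\kappa_i^F a)=1$. You instead expand $T_i$ fully in the free right $Q'$-basis $\{1,\delta_i\}$ as $T_i=p+\delta_i q$ and match coefficients against $\Theta_F T_i+1$, which is a bit more elementary (no appeal to the Demazure-algebra relations) at the cost of slightly longer algebra. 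The two closing identities you use — $p+s_i(p)=a\kappa_i=\Theta_F$ and $c^2-c\Theta_F=\mu_F(x_\gamma)\mu_F(x_{-\gamma})=1$ — are precisely the paper's two final verifications in different notation, and your handling of the $\kappa^F=0$ case via Lemma~\ref{lem:kappa-eq-zeo} (so that $\kappa_i=0$ and $\Theta_F=0$ simultaneously) is correct. The treatment of invertibility and the factored form~\eqref{eq:quadrelat2} is standard and matches what the paper means by ``direct computation.''
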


\begin{proof}
  Let $a$ and $b$ be the coefficients of $\iDelta_i$ and $\delta_i$ in~\eqref{eq:T_i-def}, so that $T_i = \iDelta_i a + \delta_i b$.  Using~\eqref{eq:Dem-elem-squared} and the fact that $\iDelta_i \delta_i + \delta_i \iDelta_i = (\delta_i-1) \kappa_i^F$, we have
  \[
    T_i^2 = \iDelta_i^2 a^2 + (\iDelta_i \delta_i+\delta_i \iDelta_i)ab+b^2 = \iDelta_i \kappa_i^F a^2+(\delta_i-1) \kappa_i^F ab +b^2 = T_i (\kappa_i^F a) +b(b-\kappa_i^F a).
  \]
  One readily verifies that in both cases in~\eqref{eq:T_i-def}, we have $\kappa_i^F a=\Theta_F$ and $b(b-\kappa_i^F a)=1$, completing the proof of the first statement in the lemma.  The second two statements follow by direct computation.
\end{proof}

\begin{rem}
  Because of~\eqref{eq:quadrelat2} and the fact that $\mu_F(x_{-\gamma}) = \mu_F(x_\gamma)^{-1}$, one may think of the power series $\mu_F(x_\gamma)$ as a generalization of the deformation parameter $t$ of the classical Hecke algebra (see Definition~\ref{def:Hecke-algebra}\eqref{def-item:classical-Hecke-quadratic}).
\end{rem}

\begin{prop} \label{prop:FHA-braid-relations}
  Suppose $i,j \in I$ and let $m_{ij}$ be the order of $s_i s_j$ in $W$.  Then
  \begin{equation} \label{eq:T_i-general-braid-relation}
    \underbrace{T_j T_i T_j \dotsm}_{m_{ij} \text{ terms }} - \underbrace{T_i T_j T_i \dotsm}_{m_{ij} \text{ terms}} = \sum_{w \in W,\, \ell(w) \le m_{ij}-2} T_w \tau_w^{ji}
  \end{equation}
  for some $\tau^{ij}_w \in Q'$.  In particular, we have the following.
  \begin{enumerate}
    \item \label{prop-item:T_i-nonadjacent} If $\langle \alpha_i^\vee, \alpha_j \rangle=0$, so that $m_{ij}=2$, then $T_i T_j = T_j T_i$.

    \item \label{prop-item:T_i-adjacent} If $\langle \alpha_i^\vee, \alpha_j \rangle = \langle \alpha_j^\vee, \alpha_i \rangle=-1$ so that $m_{ij}=3$, then
        \begin{equation} \label{eq:T_i-braid-relation}
          T_j T_i T_j - T_i T_j T_i = (T_i - T_j) \sigma_{ij},\quad \sigma_{ij} = \chi_{i + j} (\chi_j - \chi_{-i}) - \chi_i \chi_j,
        \end{equation}
        where
        \[
          \chi_\alpha =
          \begin{cases}
            \frac{\Theta_F}{x_\alpha \kappa_\alpha} & \text{if } \kappa^F \ne 0, \\
            \frac{2 x_\gamma}{x_\alpha} & \text{if } \kappa^F = 0,
          \end{cases}
            \qquad \alpha \in \Phi.
        \]
        (We use the usual convention that $\chi_{\pm i} = \chi_{\pm \alpha_i}$ and $\chi_{\pm i \pm j} = \chi_{\pm \alpha_i \pm \alpha_j}$.)  Moreover, $\sigma_{ij} = \sigma_{ji}$ commutes with $\delta_i$ and $\delta_j$ (and hence with $T_i$ and $T_j$).  If $\kappa^F=0$, then $\sigma_{ij} = 4 x_\gamma^2 \kappa_{i,j} \in R_F \llbracket \Lambda \rrbracket_F$.
  \end{enumerate}
\end{prop}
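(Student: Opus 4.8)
The plan is to follow the proof of Proposition~\ref{prop:FDA-braid-relations}, after first recording a description of the generators that is uniform in the two cases of~\eqref{eq:T_i-def}. Writing $T_i=\iDelta_i a_i+\delta_i b_i$ as in the proofs of Lemmas~\ref{lem:affine-Hecke-rel} and~\ref{lem:T_i-quadratic}, using $\iDelta_i=\tfrac1{x_i}-\delta_i\tfrac1{x_{-i}}$, and invoking the identity $\chi_\alpha+\chi_{-\alpha}=\Theta_F$ (immediate from $\kappa_\alpha=\tfrac1{x_\alpha}+\tfrac1{x_{-\alpha}}$ when $\kappa^F\ne0$, and reading $0=0$ when $\kappa^F=0$, since then $\Theta_F=0$), one obtains in both cases
\[
  T_i=\chi_i+\delta_i\big(\chi_i+\mu_F(x_{-\gamma})\big).
\]
All coefficients that occur below transform under $W$ by $w(\chi_\alpha)=\chi_{w(\alpha)}$ and fix $\mu_F(x_{\pm\gamma})$ and $\Theta_F$. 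Expanding any product $T_{i_1}\dotsm T_{i_l}$ in the right $Q'$-module basis $\{\delta_w\}_{w\in W}$ of $Q_W'$ (Definition~\ref{def:twisted-FGA}) and collecting reflections to one side, the coefficient of $\delta_w$ for a reduced word $w=s_{i_1}\dotsm s_{i_l}$ is $\prod_{\beta\in N(w)}\big(\chi_\beta+\mu_F(x_{-\gamma})\big)$, where $N(w)=\{\beta>0:w(\beta)<0\}$ (using \cite[Cor.~2 de la Prop.~17]{Bou81}), and every other $\delta_v$ occurring has $\ell(v)<l$.

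For~\eqref{eq:T_i-general-braid-relation} I would expand $\underbrace{T_jT_i\dotsm}_{m_{ij}\text{ terms}}$ and $\underbrace{T_iT_j\dotsm}_{m_{ij}\text{ terms}}$ in the $\{\delta_w\}$ basis. Their $\delta_{w_0}$-coefficients, where $w_0$ is the longest element of $\langle s_i,s_j\rangle$, both equal $\prod_{\beta\in N(w_0)}\big(\chi_\beta+\mu_F(x_{-\gamma})\big)$, so they cancel in the difference $Z$; the $\delta_v$-coefficients with $\ell(v)=m_{ij}-1$ cancel by the argument used for the $\iDelta_i$'s (via $w_0(\alpha_p)=-\alpha_{p'}$). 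To finish, I would establish a \emph{divisibility lemma}: for every $P\in H_F$ (or $P\in H_F\cdot Q'$) and every $v\in W$, the $\delta_v$-coefficient of $P$ lies in $\big(\prod_{\beta\in N(v)}(\chi_\beta+\mu_F(x_{-\gamma}))\big)Q'$; this follows by induction on the number of $T$-factors from the displayed form of $T_i$ and the standard rules expressing $N(vs_i)$ in terms of $s_i\big(N(v)\big)$ and $\{\alpha_i\}$. Granting it, one produces the $\tau_w^{ji}$ by triangular back-substitution: each $T_w:=T_{i_1}\dotsm T_{i_l}$ (reduced word) has $\delta_w$-coefficient $\prod_{\beta\in N(w)}\big(\chi_\beta+\mu_F(x_{-\gamma})\big)$ and $\delta$-terms of strictly smaller length, and since $Z$ has zero $\delta_v$-coefficient for $\ell(v)\ge m_{ij}-1$ and a divisible $\delta_v$-coefficient otherwise, subtracting $T_v\tau_v^{ji}$ from $Z$ for $\ell(v)=m_{ij}-2$, then $m_{ij}-3$, and so on down to $\ell(v)=0$, yields~\eqref{eq:T_i-general-braid-relation}. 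Note that, unlike~\eqref{eq:Demazure-general-braid-relation}, the sum here includes the term $\ell(w)=0$, so no separate argument on constant terms is needed.

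Part~\eqref{prop-item:T_i-nonadjacent} is a short direct check: when $m_{ij}=2$ we have $s_i(\alpha_j)=\alpha_j$ and $s_j(\alpha_i)=\alpha_i$, so $\chi_j$ is $s_i$-fixed and $\chi_i$ is $s_j$-fixed, and expanding $T_iT_j$ and $T_jT_i$ from the displayed form gives the same ($i\leftrightarrow j$ symmetric) expression, whence $T_iT_j=T_jT_i$. For part~\eqref{prop-item:T_i-adjacent}, with $m_{ij}=3$, $s_i(\alpha_j)=s_j(\alpha_i)=\alpha_i+\alpha_j$ and $s_is_j(\alpha_i)=\alpha_j$, I would expand $T_jT_iT_j$ into its eight $\delta_w$-terms exactly as in the displayed computation in the proof of Proposition~\ref{prop:FDA-braid-relations}\eqref{prop-item:Delta_i-adjacent}, subtract $T_iT_jT_i$, and check that the $\delta_{s_i}$- and $\delta_{s_j}$-coefficients of the difference are $\big(\chi_i+\mu_F(x_{-\gamma})\big)\sigma_{ij}$ and $-\big(\chi_j+\mu_F(x_{-\gamma})\big)\sigma_{ij}$, the $\delta_e$-coefficient is $(\chi_i-\chi_j)\sigma_{ij}$, and all $\delta_v$-coefficients with $\ell(v)\ge2$ vanish; comparing with the $\delta_w$-expansion of $(T_i-T_j)\sigma_{ij}$ then gives~\eqref{eq:T_i-braid-relation}. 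This computation --- finite but lengthy, and using $\chi_\alpha+\chi_{-\alpha}=\Theta_F$ repeatedly to collapse sums of $\chi$'s --- together with the divisibility lemma, is the step I expect to be the main obstacle.

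It remains to verify the properties of $\sigma_{ij}$. Symmetry follows from $\sigma_{ij}-\sigma_{ji}=\chi_{i+j}\big((\chi_j+\chi_{-j})-(\chi_i+\chi_{-i})\big)=\chi_{i+j}(\Theta_F-\Theta_F)=0$. For invariance, $s_i(\chi_\alpha)=\chi_{s_i(\alpha)}$ together with $s_i(\alpha_j)=\alpha_i+\alpha_j$, $s_i(\alpha_i)=-\alpha_i$ and $s_i(\alpha_i+\alpha_j)=\alpha_j$ gives $s_i(\sigma_{ij})=\chi_j\chi_{i+j}-\chi_j\chi_i-\chi_{-i}\chi_{i+j}=\sigma_{ij}$, and $s_j(\sigma_{ij})=s_j(\sigma_{ji})=\sigma_{ji}=\sigma_{ij}$; hence $\sigma_{ij}$ commutes with $\delta_i$ and $\delta_j$, and therefore with $T_i$ and $T_j$. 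Finally, when $\kappa^F=0$ we have $\chi_\alpha=2x_\gamma/x_\alpha$, so $\sigma_{ij}=4x_\gamma^2\big(\tfrac1{x_{i+j}}(\tfrac1{x_j}-\tfrac1{x_{-i}})-\tfrac1{x_ix_j}\big)=4x_\gamma^2\,\kappa_{i,j}$ by~\eqref{eq:kappa_ij}; since $x_\gamma\in R_F$ and $\kappa_{i,j}\in R_F\llbracket\Lambda\rrbracket_F$ (the computation in~\eqref{eq:kappa_ij} being valid over any coefficient ring), $\sigma_{ij}\in R_F\llbracket\Lambda\rrbracket_F$.
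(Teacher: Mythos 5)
Your proof is essentially the paper's: you rewrite each $T_i$ in the form $\chi_i + \chi_i'\delta_i$ (with $\chi_i' = \mu_F(x_\gamma)-\chi_i$), expand the difference of the two $m_{ij}$-fold products in the right $Q'$-module basis $\{\delta_w\}$ of $Q_W'$, kill the top two orders exactly as in the proof of Proposition~\ref{prop:FDA-braid-relations}, and for $m_{ij}=2,3$ do the same direct expansion the paper does. One place where you are genuinely more careful deserves flagging. To pass from ``the $\delta_v$-coefficients of $Z$ vanish for $\ell(v)\ge m_{ij}-1$'' to an expression $Z=\sum_{\ell(w)\le m_{ij}-2}T_w\tau^{ji}_w$ with $\tau^{ji}_w\in Q'$, one does triangular back-substitution against $T_w=\delta_w c_w + (\text{lower})$; in the Demazure case the leading coefficient is $\pm\prod_{\beta\in N(w)}x_\beta^{-1}$, a unit of $Q^F$, so the paper simply inverts, but here $c_w=\prod_{\beta\in N(w)}\bigl(\chi_\beta+\mu_F(x_{-\gamma})\bigr)$ need not be a unit of $Q'$. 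Your divisibility lemma (the $\delta_v$-coefficient of anything in $H_F\cdot Q'$ lies in $c_vQ'$, proved by induction on the number of $T$-factors via the standard recursion for $N(vs_i)$) is exactly what is needed and fills a step that the paper's citation of Proposition~\ref{prop:FDA-braid-relations} glosses over. Two smaller comparisons: your derivation of $\sigma_{ij}=\sigma_{ji}$ from $\chi_\alpha+\chi_{-\alpha}=\Theta_F$ is cleaner than the paper's, which recasts $\sigma_{ij}$ in an explicitly symmetric form; and for part~\eqref{prop-item:T_i-adjacent} you only state the target $\delta_w$-coefficients of $T_jT_iT_j-T_iT_jT_i$ (they are correct, and match $(T_i-T_j)\sigma_{ij}$), whereas the paper carries out the eight-term expansion --- this computation is routine but is the bulk of the paper's proof and would need to be written out to make yours complete.
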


\begin{proof}
  Set $\mu = \mu_F(x_\gamma)$ (thus $\mu=1$ iff $\kappa^F=0$ by Lemma~\ref{lem:kappa-eq-zeo}) and $\chi_\alpha' = \mu - \chi_\alpha$ for all $\alpha \in \Phi$.  In both cases (i.e.\ $\kappa^F \ne 0$ or $\kappa^F=0$), we have $T_j = \chi_j + \chi_j' \delta_j$.  Then the first part of the proposition follows from the proof of Proposition~\ref{prop:FDA-braid-relations}.

  Part~\eqref{prop-item:T_i-nonadjacent} follows immediately from Proposition~\ref{prop:FDA-braid-relations}\eqref{prop-item:Delta_i-nonadjacent} and the facts that, under the assumptions, $\delta_i \delta_j = \delta_j \delta_i$, $\iDelta_i \delta_j = \delta_j \iDelta_i$, and $\iDelta_j \delta_i = \delta_i \iDelta_j$.

  It remains to prove~\eqref{prop-item:T_i-adjacent}.  We have
  \begin{align*}
    T_j T_i T_j &= \big( \chi_j + (\mu-\chi_j) \delta_j \big) \big( \chi_i + (\mu - \chi_i) \delta_i \big) \big( \chi_j + (\mu-\chi_j) \delta_j \big) \\
    &= \chi_i \chi_j^2 + (\mu - \chi_j)(\mu - \chi_{-j})\chi_{i+j} + \big( (\mu - \chi_j)\chi_{i+j} \chi_{-j} + (\mu - \chi_j) \chi_i \chi_j \big) \delta_j \\
    & \qquad + (\mu-\chi_i) \chi_j \chi_{i+j} \delta_i + (\mu-\chi_j)(\mu-\chi_{i+j}) \chi_i \delta_j \delta_i + (\mu-\chi_i)(\mu-\chi_{i+j}) \chi_j \delta_i \delta_j \\
    & \qquad + (\mu - \chi_j)(\mu - \chi_i)(\mu-\chi_{i+j}) \delta_j \delta_i \delta_j.
  \end{align*}
  Using the fact that $\delta_j \delta_i \delta_j = \delta_j \delta_i \delta_j$, we see that
  \begin{align*}
    T_j T_i T_j - T_i T_j T_i &= \chi_i \chi_j^2 - \chi_i^2 \chi_j + (\mu - \chi_j)(\mu - \chi_{-j})\chi_{i+j} - (\mu - \chi_i)(\mu - \chi_{-i}) \chi_{i+j} \\
    &\qquad \qquad \qquad \qquad \qquad \qquad \qquad \qquad - \sigma_{ji} (\mu - \chi_j) \delta_j + \sigma_{ij} (\mu -\chi_i) \delta_i \\
    &= \sigma_{ij} \chi_i - \sigma_{ji} \chi_j - \sigma_{ji} (\mu - \chi_j) \delta_j + \sigma_{ij} (\mu -\chi_i) \delta_i + \mu \chi_{i+j} (\chi_i + \chi_{-i} - \chi_j - \chi_{-j}) \\
    &= \sigma_{ij} T_i - \sigma_{ji} T_j + \mu \chi_{i+j} (\chi_i + \chi_{-i} - \chi_j - \chi_{-j}) \\
    &= \sigma_{ij} T_i - \sigma_{ji} T_j + \mu(\sigma_{ji} - \sigma_{ij}) \\
    &= \sigma_{ij} (T_i - \mu) - \sigma_{ji} (T_j - \mu).
  \end{align*}

  If $\kappa^F=0$, then clearly $\sigma_{ij} = 4x_\gamma^2 \kappa_{i,j}$.  Since, in this case, $\kappa_{i,j}=\kappa_{j,i}$ (use the fact that $x_{-i}=-x_i$ in \eqref{eq:kappa_ij}), we have $\sigma_{ij}=\sigma_{ji}$.  If $\kappa^F\ne 0$, we have
  \begin{align*} \ts
    \sigma_{ij} &= \ts \Theta_F^2 \frac{x_i x_{-i} \kappa_i - x_ix_j\kappa_j - x_{-i}x_{i+j}\kappa_{i+j}}{x_i x_{-i} x_j x_{i+j} \kappa_i \kappa_j \kappa_{i+j}} \\
    &= \ts \Theta_F^2
    \frac{(x_i + x_{-i})x_{-j}x_{-i-j} - (x_j+x_{-j})x_ix_{-i-j} - (x_{i+j} + x_{-i-j})x_{-i}x_{-j}}{(x_i + x_{-i})(x_j + x_{-j})(x_{i+j} + x_{-i-j})} \\
    &= \ts - \Theta_F^2 \frac{x_ix_jx_{-i-j} + x_{-i}x_{-j}x_{i+j}}{(x_i + x_{-i})(x_j + x_{-j})(x_{i+j} + x_{-i-j})} = - \Theta_F^2 \left( \tfrac{1}{x_ix_jx_{-i-j}} + \tfrac{1}{x_{-i}x_{-j}x_{i+j}} \right) \tfrac{1}{\kappa_i\kappa_j\kappa_{i+j}},
  \end{align*}
  which implies that $\sigma_{ij}=\sigma_{ji}$.  Thus we have
  \[
    T_j T_i T_j - T_i T_j T_i=\sigma_{ij}(T_i-T_j).
  \]
  The fact that $\sigma_{ij}$ commutes with $\delta_i$ and $\delta_j$ is an direct verification left to the reader.
\end{proof}

For each $w \in W$, fix a reduced decomposition $w = s_{i_1} \dotsm s_{i_k}$ and set
\begin{equation} \label{eq:T_w-def}
  T_w = T_{i_1} \dotsm T_{i_k}.
\end{equation}
Note that, in general, $T_w$ depends on the choice of reduced decomposition.

\begin{defin}[$\tilde R_F$ and $R_F \llbracket \Lambda \rrbracket^\sim_F$]
  Let $\tilde R_F$ be the subalgebra of $Q'$ defined by
  \begin{equation}
    \tilde R_F := R_F[W] \cdot R_F [\tau^{ji}_w\ |\ i,j \in I,\ w \in W,\ \ell(w) \le m_{ij}-2].
  \end{equation}
  where $R_F[W] \cdot$ denotes the natural action of the group algebra $R_F[W]$ of $W$ on $Q'$.  Similarly, define
  \begin{equation}
    R_F \llbracket \Lambda \rrbracket^\sim_F := R_F[W] \cdot R_F \llbracket \Lambda \rrbracket_F^\kappa [\tau^{ji}_w\ |\ i,j \in I,\ w \in W,\ \ell(w) \le m_{ij}-2].
  \end{equation}
  Note that $R_F \llbracket \Lambda \rrbracket^\sim_F = R_F \llbracket \Lambda \rrbracket_F$ if the root system is simply laced and $\kappa^F = 0$ (since $\sigma_{ij} = 4 x_\gamma^2 \kappa_{i,j} \in R_F \llbracket \Lambda \rrbracket_F$ in that case).
\end{defin}

\begin{rem}
  In fact, we expect that the coefficients $\tau^{ji}_w$ lie in $R_F \llbracket \Lambda \rrbracket_F$. In this case, we would have $R_F \llbracket \Lambda \rrbracket_F^\sim = R_F \llbracket \Lambda \rrbracket_F^\kappa$.
\end{rem}

\begin{lem} \label{lem:T_w-basis}
  The set $\{T_w\ |\ w \in W\}$ forms a basis of $H_F \otimes_{R_F} \tilde R_F$ as a right (or left) $\tilde R_F$-module and a basis of $\bH_F \otimes_{R_F \llbracket \Lambda \rrbracket_F} R_F \llbracket \Lambda \rrbracket^\sim_F$ as a right (or left) $R_F \llbracket \Lambda \rrbracket_F^\sim$-module.
\end{lem}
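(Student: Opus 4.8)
The plan is to follow the proof of Lemma~\ref{lem:iDelta-basis} closely, substituting the presentation of $\bH_F$ established above for that of $\bD_F$. First I would record that, since $R$ is a domain and $\Gamma$ is free of rank one, $R_F = R\llbracket\Gamma\rrbracket_F \cong R\llbracket\gamma\rrbracket$ is a domain, hence $R_F\llbracket\Lambda\rrbracket_F$ (a power series ring in finitely many variables over $R_F$) is a domain, hence its fraction field and the subring $Q'$ are domains, and therefore so are the subrings $\tilde R_F$ and $R_F\llbracket\Lambda\rrbracket_F^\sim$ of $Q'$.

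For the spanning statement I would show that any word in the generators $T_i$ rewrites as a $\tilde R_F$-linear combination of the $T_w$, $w\in W$: the quadratic relation $T_i^2 = T_i\Theta_F + 1$ of Lemma~\ref{lem:T_i-quadratic} removes repeated adjacent letters (with $1 = T_e$, so this step alone never leaves $R_F$-combinations of the $T_w$), while the braid relations of Proposition~\ref{prop:FHA-braid-relations} rewrite $T_jT_iT_j\cdots - T_iT_jT_i\cdots$ as $\sum_{\ell(w)\le m_{ij}-2}T_w\tau_w^{ji}$ with $\tau_w^{ji}\in\tilde R_F$. Since $\tilde R_F$ is closed under multiplication and under the $W$-action and contains all the $\tau_w^{ji}$, no coefficient produced by this rewriting ever leaves $\tilde R_F$, and a length/lexicographic induction shows the process terminates in an expression $\sum_{w\in W}T_w a_w$ with $a_w\in\tilde R_F$; hence $\{T_w\}$ spans $H_F\otimes_{R_F}\tilde R_F$. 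For $\bH_F$ I would additionally invoke Lemma~\ref{lem:affine-Hecke-rel}, by which $\varphi T_i = T_i s_i(\varphi) + a\,\Delta_{\alpha_i}^F(\varphi)$ with the correction term again lying in $R_F\llbracket\Lambda\rrbracket_F^\kappa$; this lets me move every element of $R_F\llbracket\Lambda\rrbracket_F^\kappa$ to the right of the $T_i$'s, so that any product of the $T_i$ and of elements of $R_F\llbracket\Lambda\rrbracket_F^\kappa$ rewrites as an $R_F\llbracket\Lambda\rrbracket_F^\sim$-combination of the $T_w$, giving the spanning set for $\bH_F\otimes_{R_F\llbracket\Lambda\rrbracket_F}R_F\llbracket\Lambda\rrbracket_F^\sim$.

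For linear independence I would work inside $Q_W'$, which is free as a right (equivalently, left) $Q'$-module on $\{\delta_w\}_{w\in W}$. A direct computation from Definition~\ref{def:formal-HA} gives $T_i = \delta_e\,p_i + \delta_{s_i}\,q_i$ with $p_i,q_i\in Q'$ and $q_i\neq 0$. Expanding $T_w = T_{i_1}\cdots T_{i_k}$ for a reduced decomposition of $w$ and noting that the $\delta$-part of the subproduct that selects $\delta$ from the letters indexed by $S\subseteq\{1,\dots,k\}$ has length at most $|S|$, with length equal to $\ell(w)=k$ only when $S=\{1,\dots,k\}$, I obtain $T_w = \delta_w c_w + \sum_{\ell(v)<\ell(w)}\delta_v c_v$ with $c_w\neq 0$. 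Ordering $W$ by length, the transition ``matrix'' from $\{\delta_w\}$ to $\{T_w\}$ is then triangular with nonzero diagonal entries in the domain $Q'$, so a back-substitution (valid because $Q'$ has no zero divisors, even though the $c_w$ need not be units) shows that any relation $\sum_w T_w a_w = 0$ with $a_w\in\tilde R_F$, respectively $a_w\in R_F\llbracket\Lambda\rrbracket_F^\sim$, forces all $a_w=0$. Together with the spanning statements this proves the lemma; the left-module versions are entirely symmetric.

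I do not anticipate a genuine obstacle, since every structural fact needed -- the quadratic relation, the braid relations, the affine commutation relation, and the $\delta$-triangularity of the $T_w$ -- is already in place. The two points requiring care are: first, the bookkeeping that the reduction never produces coefficients outside $\tilde R_F$ (respectively $R_F\llbracket\Lambda\rrbracket_F^\sim$), which is exactly why those rings were defined to be stable under the $W$-action and to contain the braid coefficients; and second, the observation that -- unlike the $\iDelta_w$ of Lemma~\ref{lem:iDelta-basis}, whose leading coefficients are units -- the leading coefficients $c_w$ of the $T_w$ are only guaranteed to be nonzero, so linear independence must be argued directly over the domain $Q'$ rather than by inverting a transition matrix over $Q'$.
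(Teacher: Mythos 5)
Your proposal is correct and takes the same approach the paper indicates --- the paper simply states that the proof is analogous to that of Lemma~\ref{lem:iDelta-basis} and omits it, and your argument is a careful unwinding of exactly that analogy (spanning via the quadratic, braid, and commutation relations, linear independence via $\delta_w$-triangularity). You also correctly flag the one place where a literal copy of the Lemma~\ref{lem:iDelta-basis} argument would overclaim: the leading $\delta_w$-coefficients of $\iDelta_w$ are units in $Q^F$ (so $\{\iDelta_w\}$ is itself a $Q^F$-basis of $Q_W^F$), whereas the leading coefficients $c_w$ of $T_w$ are only guaranteed nonzero in the domain $Q'$, so linear independence must be extracted from triangularity and the absence of zero divisors rather than by inverting the transition matrix over $Q'$.
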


\begin{proof}
  The proof is analogous to that of Lemma~\ref{lem:iDelta-basis} and will be omitted.
\end{proof}

\begin{theo} \label{thm:formal-Hecke-relations}
  Given a formal group law $(R,F)$, the formal affine Hecke algebra $\bH_{F}$ is generated as an $R_F$-algebra by $R_F \llbracket \Lambda \rrbracket_F^\kappa$ and the elements $T_i$, $i \in I$, and satisfies
  \begin{enumerate}
    \item \label{thm-itm:T_i-affine-relation} relation~\eqref{eq:formal-affine-hecke-Tx-relation} for all $i \in I$ and $\varphi \in R_F \llbracket \Lambda \rrbracket_F^\kappa$,

    \item \label{thm-item:T_i-quadratic} $(T_i+\mu_F(x_{-\gamma}))(T_i-\mu_F(x_\gamma))=0$ for all $i \in I$,

    \item $T_i T_j = T_j T_i$ for all $i,j \in I$ such that $\left< \alpha_i^\vee, \alpha_j \right> = 0$,

    \item \label{thm-item:T_i-adjacent} relation~\eqref{eq:T_i-braid-relation} for all $i,j \in I$ such that $m_{ij}=3$, and

    \item \label{thm-item:T_i-higher-order-braid} relation~\eqref{eq:T_i-general-braid-relation} for all $i,j \in I$ such that $m_{ij} > 3$.
  \end{enumerate}
  Furthermore, \eqref{thm-itm:T_i-affine-relation}--\eqref{thm-item:T_i-higher-order-braid} form a complete set of relations (over $R_F \llbracket \Lambda \rrbracket^\sim_F$) for $\bH_F \otimes_{R_F \llbracket \Lambda \rrbracket_F} R_F \llbracket \Lambda \rrbracket^\sim_F$.
\end{theo}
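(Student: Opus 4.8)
The plan is to follow the proof of Theorem~\ref{thm:Demazure-relations} almost verbatim, substituting the Hecke-algebra ingredients for the Demazure ones. The first assertion is essentially immediate: $\bH_F$ is generated over $R_F \llbracket \Lambda \rrbracket_F^\kappa$ by the $T_i$ by the very definition of $\bH_F$ (together with the closure statement at the end of Lemma~\ref{lem:affine-Hecke-rel}); relation~\eqref{eq:formal-affine-hecke-Tx-relation} is Lemma~\ref{lem:affine-Hecke-rel}; relation~\eqref{thm-item:T_i-quadratic} is~\eqref{eq:quadrelat2} of Lemma~\ref{lem:T_i-quadratic}; and the three braid-type relations are exactly the content of Proposition~\ref{prop:FHA-braid-relations}.

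For completeness I would introduce the abstract $R_F$-algebra $\tilde \bH_F$ generated by $R_F \llbracket \Lambda \rrbracket^\sim_F$ and symbols $T_i'$, $i \in I$, subject to relations~\eqref{thm-itm:T_i-affine-relation}--\eqref{thm-item:T_i-higher-order-braid} with $T_i$ replaced by $T_i'$ throughout (and with $\Theta_F$, $\mu_F(x_{\pm\gamma})$, $\sigma_{ij}$, and the $\tau^{ji}_w$ read inside $R_F \llbracket \Lambda \rrbracket^\sim_F$). Since $R$ is a domain, so are $R_F$, $R_F \llbracket \Lambda \rrbracket_F$, $Q'$, and its subring $R_F \llbracket \Lambda \rrbracket^\sim_F$. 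By the first part of the theorem there is a surjective $R_F$-algebra homomorphism $\rho \colon \tilde \bH_F \to \bH_F \otimes_{R_F \llbracket \Lambda \rrbracket_F} R_F \llbracket \Lambda \rrbracket^\sim_F$ that is the identity on $R_F \llbracket \Lambda \rrbracket^\sim_F$ and sends $T_i' \mapsto T_i$, and the goal is to show $\rho$ is injective. Fixing for each $w \in W$ the reduced decomposition used in~\eqref{eq:T_w-def} and setting $T_w' = T_{i_1}' \dotsm T_{i_k}'$ accordingly, one checks by induction on $\ell(w)$ that the defining relations allow any element of $\tilde \bH_F$ to be rewritten as $\sum_{w \in W} T_w' a_w$ with $a_w \in R_F \llbracket \Lambda \rrbracket^\sim_F$: relation~\eqref{thm-item:T_i-quadratic} shortens any word with a repeated adjacent letter, the commuting relation and the braid relations~\eqref{eq:T_i-braid-relation} and~\eqref{eq:T_i-general-braid-relation} pass between reduced words at the cost of terms $T_v'$ with $\ell(v)$ strictly smaller, and the affine relation~\eqref{eq:formal-affine-hecke-Tx-relation} moves coefficients past the $T_i'$ to the right. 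This is precisely the spanning argument carried out for the $\iDelta_w$ in the proof of Lemma~\ref{lem:iDelta-basis}.

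To finish, suppose $\sum_{w \in W} T_w' a_w \in \ker \rho$. Applying $\rho$ gives $\sum_{w \in W} T_w a_w = 0$ in $\bH_F \otimes_{R_F \llbracket \Lambda \rrbracket_F} R_F \llbracket \Lambda \rrbracket^\sim_F$, and since $\{T_w\}_{w \in W}$ is a right $R_F \llbracket \Lambda \rrbracket^\sim_F$-basis of this algebra by Lemma~\ref{lem:T_w-basis}, we get $a_w = 0$ for all $w$. Hence $\rho$ is injective, and therefore an isomorphism, which is the asserted completeness of relations~\eqref{thm-itm:T_i-affine-relation}--\eqref{thm-item:T_i-higher-order-braid} over $R_F \llbracket \Lambda \rrbracket^\sim_F$.

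The main obstacle is the straightening/spanning step: unlike in the classical (affine) Hecke algebra, the braid relations hold only \emph{up to lower-order corrections}, so the usual Matsumoto-type argument is unavailable and one must instead argue by induction on length, verifying at each stage that the correction coefficients land in $R_F \llbracket \Lambda \rrbracket^\sim_F$ (and not merely in $Q'$). This is in fact exactly why the coefficients $\tau^{ji}_w$ and $\sigma_{ij}$ were adjoined to $R_F \llbracket \Lambda \rrbracket_F^\kappa$ in the definition of $R_F \llbracket \Lambda \rrbracket^\sim_F$, and why the completeness statement is phrased only after base change to that ring; everything else is formal and parallels Theorem~\ref{thm:Demazure-relations} and Lemma~\ref{lem:iDelta-basis}.
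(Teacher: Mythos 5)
Your proof is correct and takes essentially the same approach as the paper: the first part follows from Lemmas~\ref{lem:affine-Hecke-rel} and~\ref{lem:T_i-quadratic} and Proposition~\ref{prop:FHA-braid-relations}, and the completeness argument transcribes, mutatis mutandis, the surjection-plus-Lemma~\ref{lem:T_w-basis} argument from the proof of Theorem~\ref{thm:Demazure-relations}. The paper's proof simply cites that analogy rather than writing it out; your version fills in exactly the details the paper leaves implicit.
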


\begin{proof}
  The first part of the theorem follows immediately from Lemmas~\ref{lem:affine-Hecke-rel} and~\ref{lem:T_i-quadratic} and Proposition~\ref{prop:FHA-braid-relations}.  The second part is analogous to the proof of Theorem~\ref{thm:Demazure-relations} and will be omitted.
\end{proof}

%%%%%%%%%%%%%%%%%%%%%%%%%%%%%%%%%%%%%%%%%%%%%%%%%%%%%%%%%%%%%%%%
%
\section{Formal (affine) Hecke algebras: examples and further properties} \label{sec:FHAs-egs}
%
%%%%%%%%%%%%%%%%%%%%%%%%%%%%%%%%%%%%%%%%%%%%%%%%%%%%%%%%%%%%%%%%

In this final section we specialize the definition of the formal (affine) Hecke algebra to various {\fgl}s, yielding classical algebras as well as ones that seem to be new.  We then prove several important facts about these algebras in general.

There is a natural action of $Q_W'$ on $Q'$ where $Q' \subseteq Q_W'$ acts by left multiplication and $R_F[W]$ acts via the action of the Weyl group.  Thus we have a map $Q_W' \to \End_{R_F} Q'$.  Since the operators $T^F_i$ preserve $R_F \llbracket \Lambda \rrbracket_F^\kappa$, we have an induced map $\bH_F \to \End_{R_F} R_F \llbracket \Lambda \rrbracket_F^\kappa$ of $R_F$-algebras.  Recall that if $a_{11}$ (in the notation of~\eqref{equ-fgl}) is invertible in $R$, then $\kappa_\alpha^F$ is invertible for all $\alpha \in \Phi$, and so $R_F \llbracket \Lambda \rrbracket_F^\kappa = R_F \llbracket \Lambda \rrbracket_F$.

\begin{prop} \label{prop:Hecke-alg-embedding}
  If $a_{11}$ is invertible in $R$, then the map $\bH_F \to \End_{R_F} R_F \llbracket \Lambda \rrbracket_F$ described above is injective.  In other words, the natural action of $\bH_F$ on $R_F \llbracket \Lambda \rrbracket_F$ is faithful.
\end{prop}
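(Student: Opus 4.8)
The plan is to use the $\delta_w$-basis of $Q_W'$ together with Dedekind's classical lemma on the linear independence of distinct automorphisms of a field. Since $a_{11}$ is invertible we have $R_F \llbracket \Lambda \rrbracket_F^\kappa = R_F \llbracket \Lambda \rrbracket_F$, and the operators $T_i^F$ (hence all of $\bH_F$) preserve this ring, so the map in question is just the restriction to $R_F \llbracket \Lambda \rrbracket_F$ of the natural action of $Q_W'$ on $Q'$. Thus it suffices to take $h \in \bH_F$ with $h \cdot \varphi = 0$ for all $\varphi \in R_F \llbracket \Lambda \rrbracket_F$ and deduce $h = 0$.

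First I would write $h$, regarded as an element of $Q_W'$, uniquely in the $\delta_w$-basis as $h = \sum_{w \in W} \delta_w \psi_w$ with $\psi_w \in Q'$. Recalling that under the natural action $\delta_w$ acts by the automorphism $w$ while $Q' \subseteq Q_W'$ acts by left multiplication (Definition~\ref{def:twisted-FGA}), one computes
\[
  h \cdot \varphi \;=\; \sum_{w \in W} w(\psi_w)\, w(\varphi) \qquad \text{for all } \varphi \in Q',
\]
so the hypothesis on $h$ says that $\sum_{w \in W} w(\psi_w)\, w(\varphi) = 0$ for every $\varphi \in R_F \llbracket \Lambda \rrbracket_F$. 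Next I would invoke linear independence of characters. Because $R$ is a domain and $\Gamma$, $\Lambda$ are free of finite rank, $R_F = R \llbracket \Gamma \rrbracket_F$ and hence $R_F \llbracket \Lambda \rrbracket_F$ are integral domains; let $K$ denote the common fraction field of $R_F \llbracket \Lambda \rrbracket_F$ and of $Q'$. Each $w \in W$ acts as a ring automorphism of $R_F \llbracket \Lambda \rrbracket_F$, these automorphisms are pairwise distinct (the $x_\lambda$, $\lambda \in \Lambda$, are pairwise distinct, cf.\ \cite[Lem.~4.2]{CPZ09}, and $W$ acts faithfully on $\Lambda$), and they restrict to pairwise distinct monoid homomorphisms $R_F \llbracket \Lambda \rrbracket_F \setminus \{0\} \to K^\times$. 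By Dedekind's lemma these are linearly independent over $K$; applied to the displayed identity, with coefficients $w(\psi_w) \in Q' \subseteq K$, this forces $w(\psi_w) = 0$ for every $w$, and since each $w$ is injective on $Q'$ we conclude $\psi_w = 0$ for all $w$, i.e.\ $h = 0$.

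I do not expect a serious obstacle: this is a formal-group-law analogue of the standard proof that the nil Hecke (resp.\ affine Hecke) algebra acts faithfully on the polynomial ring (resp.\ group ring), with $R_F \llbracket \Lambda \rrbracket_F$ playing the role of the underlying commutative ring. The one point requiring a little care is that the test elements $\varphi$ range only over the subring $R_F \llbracket \Lambda \rrbracket_F$ of $Q'$ while the coefficients $w(\psi_w)$ lie in the larger ring $Q'$; this is precisely why one passes to the common fraction field $K$ before applying the linear-independence statement. Working with the $\delta_w$-basis of $Q_W'$, rather than with the $T_w$ of Lemma~\ref{lem:T_w-basis}, also conveniently sidesteps any dependence on whether $R_F \llbracket \Lambda \rrbracket_F^\sim$ coincides with $R_F \llbracket \Lambda \rrbracket_F$.
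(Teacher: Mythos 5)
Your proof is correct, and it takes a genuinely different route from the paper's. The paper expresses the putative kernel element in the $T_w$-basis (via Lemma~\ref{lem:T_w-basis}), clears denominators so that the coefficients land in $R_F\llbracket\Lambda\rrbracket_F$, and then runs a degree argument on the $\mathcal{I}_F$-adic filtration, ultimately passing to the associated graded ring $S^*_{R_F}(\Lambda)$ and invoking a known faithfulness result for $R_F[W]\ltimes S^*_{R_F}(\Lambda)$ acting on $S^*_{R_F}(\Lambda)$. You instead work directly in the $\delta_w$-basis of $Q_W'$ and apply Artin--Dedekind linear independence of characters. Both approaches are valid; yours is more self-contained (no citation to an external faithfulness theorem), avoids the filtration bookkeeping entirely, and, as you observe, sidesteps any dependence on whether $R_F\llbracket\Lambda\rrbracket_F^\sim$ coincides with $R_F\llbracket\Lambda\rrbracket_F$. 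Two small points worth making explicit if you wrote this up: (i) you need the \emph{monoid} form of the independence lemma, since the hypothesis only gives the relation $\sum_w w(\psi_w)\,w(\varphi)=0$ for $\varphi$ ranging over the subring $R_F\llbracket\Lambda\rrbracket_F$, not over all of $K$ — the standard proof of Artin's lemma goes through verbatim for distinct homomorphisms from a monoid into $K^\times$, but one should not quote the field-automorphism version as a black box, precisely because the relation is not known a priori to extend to quotients $\varphi/\psi$; (ii) the distinctness of the $w$ on $R_F\llbracket\Lambda\rrbracket_F$ uses both that $W$ acts faithfully on $\Lambda$ and that $\Lambda\to\mathcal{I}_F/\mathcal{I}_F^2\cong\Lambda\otimes_{\Z}R_F$ is injective, which holds since $\Lambda$ is free and $R_F\ne 0$. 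In fact your argument proves slightly more than the paper states: any element of $Q_W'$ that annihilates $R_F\llbracket\Lambda\rrbracket_F$ under the natural action is zero, with the hypothesis on $a_{11}$ entering only to ensure that $\bH_F$ actually maps into $\End_{R_F}R_F\llbracket\Lambda\rrbracket_F$.
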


\begin{proof}
Suppose, contrary to the statement of the proposition, that the given map is not injective.  Let $a \in \bH_F$ be in the kernel of this map, with $a \ne 0$.  In other words, $a$ acts by zero on $R_F \llbracket \Lambda \rrbracket_F$ under the associated action.  By Lemma~\ref{lem:T_w-basis}, we may write
  \[ \ts
    a = \sum_{w \in W} T_w a_w,\quad a_w \in R_F \llbracket \Lambda \rrbracket_F^\sim.
  \]
  Now, clearly $a \varphi$ also acts by zero on $R_F \llbracket \Lambda \rrbracket_F$ for all $\varphi \in R_F \llbracket \Lambda \rrbracket_F$.  Choosing $\varphi$ to be a common denominator of all the $a_w$, we see that we may assume that $a_w \in R_F \llbracket \Lambda \rrbracket_F$ for all $w \in W$.

  For $\varphi \in R_F \llbracket \Lambda \rrbracket_F$, define the \emph{degree} of $\varphi$ to be
  \[
    \deg \varphi := \max \{m \in \Z_{\ge 0}\ |\ \varphi \in \mathcal{I}_F^m\},
  \]
  where $\mathcal{I}_F$ is the kernel of the augmentation map $\varepsilon \colon R_F \llbracket \Lambda \rrbracket_F \to R_F$ (i.e.\ the element $x_\gamma$ is \emph{not} mapped to zero).  We adopt the convention that $\deg 0 = -\infty$.  Then the formal Demazure operators lower degree and the coefficients $\mu_F(x_\gamma)$, $\frac{\Theta_F}{\kappa_i^F}$ and $x_\gamma$ appearing in Definition~\eqref{eq:T_i-def} of $T_i$ preserve degree.  Thus, if $\deg \varphi = m$, we have
  \[
    T_i(\varphi) = \mu_F(x_\gamma) s_i(\varphi) + (\text{terms of degree} < m).
  \]
  Furthermore, $\deg (\varphi \varphi') = \deg \varphi + \deg \varphi'$ for $\varphi, \varphi' \in R_F \llbracket \Lambda \rrbracket_F$.  Indeed, it follows by definition that $\deg (\varphi \varphi') \ge \deg \varphi + \deg \varphi'$.  If $\deg (\varphi \varphi') > \deg \varphi + \deg \varphi'$, then in the associated graded algebra we have $\varphi\varphi'=0$, where $\varphi\neq 0$ and $\varphi'\neq 0$.  Identifying the associated graded algebra with the polynomial algebra (by \cite[Lem.~4.2]{CPZ09}) we obtain a contradiction as the polynomial algebra is a domain.

  Let $m$ be the maximum degree of the $a_w$, $w \in W$, and set $W' = \{w \in W\ |\ \deg a_w = m\}$. Then, for all $\varphi \in R_F \llbracket \Lambda \rrbracket_F$, we have
  \begin{align*} \ts
    0 = a(\varphi) &= \ts \sum_{w \in W'} T_w (a_w \varphi) + \sum_{w \in W \setminus W'} T_w (a_w \varphi) \\
    &= \ts \sum_{w \in W'} \mu_F(x_\gamma)^{\ell(w)} s_w(a_w \varphi) + b,
  \end{align*}
  where the last summation lies in $\mathcal{I}_F^{m + \deg \varphi}$ and $b \not \in \mathcal{I}_F^{m + \deg \varphi}$.  It follows that
  \[ \ts
    \sum_{w \in W'} \mu_F(x_\gamma)^{\ell(w)} s_w(a_w \varphi) = \sum_{w \in W'} s_w \left( \mu_F(x_\gamma)^{\ell(w)} a_w \varphi \right) = 0 \quad \text{for all } \varphi \in R_F \llbracket \Lambda \rrbracket_F.
  \]
  The above sum is therefore also equal to zero in $\mathcal{I}_F^{m+\deg \varphi}/\mathcal{I}_F^{m+\deg{\varphi}+1}$.  But $\bigoplus_n \mathcal{I}_F^n/\mathcal{I}_F^{n+1} \cong S_{R_F}^*(\Lambda)$, by~\cite[Lem.~4.2]{CPZ09}.  Since the action of $R_F \llbracket W \rrbracket \ltimes S_{R_F}^*(\Lambda)$ on $S_{R_F}^*(\Lambda)$ is faithful (see, for example, the argument in~\cite[Second Proof of Thm.~3.2.2]{Kle05}), we have that $\mu_F(x_\gamma)^{\ell(w)} a_w = 0$ (hence $a_w=0$) for all $w \in W'$.  But this contradicts the choice of $m$.
\end{proof}

\begin{rem}
  In the additive and multiplicative cases, Proposition~\ref{prop:Hecke-alg-embedding} reduces to known embeddings of the (degenerate) affine Hecke algebra into endomorphism rings.  See the proof of Proposition~\ref{prop:FHA-classical}.
\end{rem}

The following proposition demonstrates that our definition of the formal (affine) Hecke algebra recovers classical objects in the additive and multiplicative cases.

\begin{prop} \label{prop:FHA-classical}
  Suppose $R=\Z$.
  \begin{enumerate}
    \item \label{prop-item:additive-FHA} For the additive \fgl, we have the following isomorphisms of algebras:
    \[
      \bH_A \cong \bH_\mathrm{deg}^\wedge,\quad H_A \cong \Z_A[W] \cong \Z[W] \otimes_\Z \Z \llbracket \gamma \rrbracket,
    \]
    where $\epsilon=-2\gamma$ and $\bH_\mathrm{deg}^\wedge$ denotes the completion of $\bH_\mathrm{deg}$ with respect to the kernel of the augmentation map $\Z[W]\otimes_{\Z}S^*_{\Z[\epsilon]}(\Lambda)\to \Z[W]$ given by $\epsilon\mapsto 0$ and $\lambda \mapsto 0$, $\lambda \in \Lambda$.

    \item \label{prop-item:mult-FHA} For the multiplicative periodic \fgl, we have the following isomorphisms of algebras:
    \[
      \bH_M \cong \bH^\wedge,\quad H_M \cong H \otimes_{\Z[t,t^{-1}]} \Z[ t,t^{-1}]^\wedge,
    \]
    where $\bH^\wedge$ denotes the completion of $\bH^\wedge$ with respect to the kernel of the map $H\otimes_{\Z[t,t^{-1}]} \Z[t,t^{-1}][\Lambda] \to \Z[W]$ that maps $t \mapsto 1$ and $e^\lambda \mapsto 1$, $\lambda\in \Lambda$.
  \end{enumerate}
\end{prop}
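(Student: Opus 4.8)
The plan is to read off both pairs of isomorphisms from the presentation of $\bH_F$ in Theorem~\ref{thm:formal-Hecke-relations} together with the basis $\{T_w\mid w\in W\}$ of Lemma~\ref{lem:T_w-basis}, after specializing the \fgl\ and identifying the coefficient rings via Examples~\ref{eg:FGL-torus} and~\ref{eg:FGAs}. A preliminary reduction is needed so that Theorem~\ref{thm:formal-Hecke-relations} describes $\bH_F$ itself rather than $\bH_F\otimes_{R_F\llbracket\Lambda\rrbracket_F}R_F\llbracket\Lambda\rrbracket^\sim_F$: I would first check that for the additive and multiplicative {\fgl}s all braid-correction coefficients $\tau^{ji}_w$ vanish and $R_F\llbracket\Lambda\rrbracket_F^\kappa=\FA$, so that $R_F\llbracket\Lambda\rrbracket^\sim_F=\FA$. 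For $m_{ij}=3$ this is immediate from Proposition~\ref{prop:FHA-braid-relations}, since $\kappa_{i,j}^A=\kappa_{i,j}^M=0$ by Remark~\ref{kijzero}; for $m_{ij}>3$ one uses that for these two {\fgl}s the formal Demazure elements satisfy the honest braid relations (Remark~\ref{kijzero}), so the left-hand side of the identity in Proposition~\ref{prop:FDA-braid-relations} is $0$ and, by linear independence of $\{\iDelta_w\}$ over $Q^F$, all $\eta^{ji}_w$ (and the $\xi$'s) vanish; since the $\tau^{ji}_w$ are obtained from the $\eta^{ji}_w$ via the passage $\iDelta_i^F\rightsquigarrow T_i^F$ in the proof of Proposition~\ref{prop:FHA-braid-relations}, they vanish too.

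For the additive \fgl\ over $\Z$ we have $\mu_A=1$, hence $\kappa^A=0$, $\Theta_A=\mu_A(x_\gamma)-\mu_A(x_{-\gamma})=0$, and $T_i^A=2\iDelta_i^Ax_\gamma+\delta_i$. By Lemma~\ref{lem:T_i-quadratic} the quadratic relation collapses to $T_i^2=1$, and the braid relations hold exactly by the previous paragraph; so the surjection $\Z_A[W]\to H_A$, $\delta_w\mapsto T_w$, sends a basis to the basis of Lemma~\ref{lem:T_w-basis} (with $\tilde R_A=R_A$) and is an isomorphism, giving $H_A\cong\Z_A[W]\cong\Z[W]\otimes_\Z\Z\llbracket\gamma\rrbracket$ by Example~\ref{eg:FGL-torus}. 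For the affine statement I would apply Lemma~\ref{lem:affine-Hecke-rel} with $\psi=x_\lambda$ and use the additivity identity $\Delta_{\alpha_i}^A(x_\lambda)=\langle\alpha_i^\vee,\lambda\rangle$ to get $x_\lambda T_i-T_ix_{s_i(\lambda)}=2x_\gamma\langle\alpha_i^\vee,\lambda\rangle$; identifying $\FA=R_A\llbracket\Lambda\rrbracket_A$ with $S^*_{\Z_A}(\Lambda)^\wedge$ (Example~\ref{eg:FGAs}) and $x_\lambda$ with $\lambda$, this is precisely the defining relation of the degenerate affine Hecke algebra with $\epsilon=-2\gamma$. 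Theorem~\ref{thm:formal-Hecke-relations} then identifies $\bH_A$ with $\bH_\mathrm{deg}^\wedge$.

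For the multiplicative periodic \fgl\ over $\Z$ (so $\beta\in\Z^\times$), under $\Z_M\cong\Z[t,t^{-1}]^\wedge$ of Example~\ref{eg:FGL-torus} we have $t=e^\gamma=1-\beta x_{-\gamma}=\mu_M(x_\gamma)$ and $t^{-1}=\mu_M(x_{-\gamma})=1-\beta x_\gamma$, so $\Theta_M=t-t^{-1}$ and~\eqref{eq:quadrelat2} becomes $(T_i+t^{-1})(T_i-t)=0$, the relation of Definition~\ref{def:Hecke-algebra}\eqref{def-item:classical-Hecke-quadratic}; with the honest braid relations and Lemma~\ref{lem:T_w-basis} ($\tilde R_M=R_M$) this yields $H_M\cong H\otimes_{\Z[t,t^{-1}]}\Z[t,t^{-1}]^\wedge$. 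For $\bH_M$, use $\FA=R_M\llbracket\Lambda\rrbracket_M\cong\Z[\Lambda]^\wedge$ with $e^\lambda=1-\beta x_{-\lambda}$ (Example~\ref{eg:FGAs}), so $x_{\alpha_i}=\beta^{-1}(1-e^{-\alpha_i})$ and hence $\Delta_{\alpha_i}^M(e^\lambda)=(e^\lambda-e^{s_i(\lambda)})/x_{\alpha_i}=\beta(e^\lambda-e^{s_i(\lambda)})/(1-e^{-\alpha_i})$; since $\Theta_M/\kappa_i^M=\beta^{-1}(t-t^{-1})$, relation~\eqref{eq:formal-affine-hecke-Tx-relation} becomes exactly $e^\lambda T_i-T_ie^{s_i(\lambda)}=(t-t^{-1})(e^\lambda-e^{s_i(\lambda)})/(1-e^{-\alpha_i})$, the relation of Definition~\ref{def:affine-Hecke-algebra}. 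Theorem~\ref{thm:formal-Hecke-relations} then gives $\bH_M\cong\bH^\wedge$.

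The step I expect to be the main obstacle is not any single computation but the bookkeeping needed to make the two notions of completion literally coincide: one must check that the iterated $(\ker\varepsilon)$-adic completion built into $\FA$ with $R_F=R\llbracket\Gamma\rrbracket_F$ agrees, under the substitutions $\epsilon=-2\gamma$ (additive) and $t=e^\gamma$ (multiplicative), with the completions of $\bH_\mathrm{deg}$ and $\bH$ at the augmentation ideals named in the statement, and that under all these identifications the basis $\{T_w\}$ of Lemma~\ref{lem:T_w-basis} goes over to the standard basis $\{T_w\}$ (resp.\ $\{\delta_w\}$) of the classical algebra, so that the comparison map is bijective and not merely surjective. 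A secondary delicate point is the vanishing of all braid-correction terms used in the first paragraph: for $m_{ij}=6$ the correction terms of Proposition~\ref{prop:FDA-braid-relations} are not manifestly of $\kappa$-type, so this really relies on the classical braid relations for the additive and multiplicative Demazure elements rather than on direct inspection of those formulas.
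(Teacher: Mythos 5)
Your computation of the relations in the simply‑laced case is correct and matches what the paper itself describes as the easy observation (``It is easy to see that for the additive and multiplicative \fgl{s} in simply laced type, the relations of Theorem~\ref{thm:formal-Hecke-relations} become the relations of the respective algebras''). The identification $\Theta_M = t-t^{-1}$, the specialization of~\eqref{eq:quadrelat2} to $(T_i+t^{-1})(T_i-t)=0$, and the matching of~\eqref{eq:formal-affine-hecke-Tx-relation} with the classical cross relations are all right.

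The gap is in your first paragraph, in the step deducing $\tau^{ji}_w=0$ for $m_{ij}>3$ from $\eta^{ji}_w=0$. The phrase ``the $\tau^{ji}_w$ are obtained from the $\eta^{ji}_w$ via the passage $\iDelta_i^F\rightsquigarrow T_i^F$'' has no content: $T_i=\iDelta_i a+\delta_i b$ and $\iDelta_i$ are different elements of $Q'_W$, and the proof of Proposition~\ref{prop:FHA-braid-relations} only borrows from the proof of Proposition~\ref{prop:FDA-braid-relations} the \emph{structural} fact that the top two orders cancel (because both $\iDelta_i$ and $T_i$ have the shape $\chi_i+\chi_i'\delta_i$); it does not express the $\tau$'s in terms of the $\eta$'s. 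Concretely, $\iDelta_j\iDelta_i\iDelta_j\dotsm=\iDelta_i\iDelta_j\iDelta_i\dotsm$ does not formally imply $T_jT_iT_j\dotsm=T_iT_jT_i\dotsm$; the cross terms in the expansion of $(\iDelta_ia+\delta_ib)(\iDelta_ja+\delta_jb)\dotsm$ do not cancel just because the pure $\iDelta$ and pure $\delta$ ones do. (Your parenthetical acknowledgement at the end still refers to ``the classical braid relations for the additive and multiplicative Demazure elements,'' which is the wrong set of braid relations: what you actually need is that the Demazure--Lusztig operators $T_i$ themselves braid.) For $m_{ij}=3$ you are fine, since Proposition~\ref{prop:FHA-braid-relations}\eqref{prop-item:T_i-adjacent} gives $\sigma_{ij}$ explicitly and it vanishes for $F_A$, $F_M$; but for $m_{ij}=4,6$ no explicit $\tau$'s are given and your route does not establish their vanishing.

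This is exactly why the paper abandons the presentation route outside the simply‑laced case and instead proves Proposition~\ref{prop:FHA-classical} by a faithfulness argument: using Proposition~\ref{prop:Hecke-alg-embedding} to embed $\bH_F$ into $\End_{R_F}R_F\llbracket\Lambda\rrbracket_F$, the images of the $T_i$ are recognized as the classical Demazure--Lusztig operators of Ginzburg/Kleshchev (additive) and Lusztig (multiplicative), whose braid relations and faithfulness on the polynomial/group ring are classical theorems. That supplies the braid relations for the $T_i$ (hence the vanishing of all $\tau^{ji}_w$, by Lemma~\ref{lem:T_w-basis}) for all Dynkin types at once, and simultaneously settles the comparison of bases and completions that you flag as a bookkeeping worry. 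To repair your proposal, replace the passage from $\eta$'s to $\tau$'s by this faithful‑action argument, and the remainder of your verification of the relations goes through.
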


\begin{proof}
  It is easy to see that for the additive and multiplicative {\fgl}s in simply laced type, the relations of Theorem~\ref{thm:formal-Hecke-relations} become the relations of the respective algebras in the statement of the proposition.  However, we provide a proof that remains valid in all types (i.e.\ not necessarily simply laced).  Note that in the both the additive case (where $\kappa^F=0$) and multiplicative periodic case (where $a_{11} = \beta$ is invertible and hence all $\kappa_\alpha$, $\alpha \in \Phi$, are invertible), we have $R_F \llbracket \Lambda \rrbracket_F^\kappa = R_F \llbracket \Lambda \rrbracket_F$.

  Consider first the additive \fgl.  Recall the identification $\Z_A \cong \Z \llbracket \gamma \rrbracket$ of Example~\ref{eg:FGL-torus}.  The injective map $\bH_A \hookrightarrow \End_{\Z_A} \Z_A \llbracket \Lambda \rrbracket_A$ is given on the $T_i$ by
  \[ \ts
    T_i = 2 \gamma \iDelta^A_i + \delta_i \mapsto s_i + 2 \gamma \frac{1}{\alpha} ( s_i - 1 ).
  \]
  Thus $\bH_A$ is isomorphic to the subalgebra $\bH_A'$ of $\End_{\Z_A} \Z_A \llbracket \Lambda \rrbracket_A$ generated by multiplication by elements of $\Z_A \llbracket \Lambda \rrbracket_A$ and the operators $s_i + 2 \gamma \frac{1}{\alpha} (s_i-1)$.

  Observe that, in the notation of \cite[\S12]{Ginzburg98}, the algebra $S_{\Z_A}^*(\Lambda)^\wedge \otimes_\Z \C =S_\Z^* (\Lambda)^\wedge \otimes_\Z \C \llbracket \gamma\rrbracket$ can be identified with the completion of the algebra $\C[\mathfrak{h},\gamma]$ of polynomial functions on $\mathfrak{h}$ with coefficients in $\C[\gamma]$.
  If we let $\epsilon = -2\gamma$, then we see that $\bH_A'$ is precisely the completion of the image of $\bH_\mathrm{deg}$ under the faithful action on $\C[\mathfrak{h},\epsilon]$ given by Demazure-Lusztig type operators (see \cite[Prop.~12.2]{Ginzburg98} or \cite[Second Proof of Thm.~3.2.2]{Kle05}).  This proves the first isomorphism of part~\eqref{prop-item:additive-FHA}.  The second follows by considering the subalgebra generated by the $T_i$.

  Now consider the multiplicative periodic \fgl\ $F_M(u,v) = u + v - \beta uv$, $\beta \in \Z^\times$.
  We have (see Example~\ref{FGL:muexamples}\eqref{example-item:multiplicative-FGL})
  \[
    \mu_M(x_\gamma)=1-\beta x_{-\gamma}=t \text{ and }\Theta_{M}=\beta (x_\gamma - x_{-\gamma}) = t-t^{-1}\in \Z[t,t^{-1}]^\wedge,
  \]
  under the identifications of Example~\ref{eg:FGL-torus}.  Using the above and the identifications of Example~\ref{eg:FGAs}\eqref{eg-item:FGA-mult}, the injective map $\bH_M \hookrightarrow \End_{\Z_M} \Z_M \llbracket \Lambda \rrbracket_M$ is given on the $T_i$ by
  \begin{equation} \label{eq:mult-Lusztig-Demazure-operator} \ts
    T_i = \iDelta^M_i \frac{\Theta_M}{\kappa_i^M} + \delta_i \mu_M(x_\gamma) = \frac{t-t^{-1}}{1 - e^{-\alpha_i}}(1-\delta_i) + t \delta_i \mapsto t^{-1} \frac{1-s_i}{e^{-\alpha_i} - 1} - t \frac{1 - e^{-\alpha_i} s_i}{e^{-\alpha_i} - 1}.
  \end{equation}
  We identify $\Z_M \llbracket \Lambda \rrbracket_M$ with $\Z[q,q^{-1}][P]^\wedge$ (completion with respect to the kernel of the map $\Z[q,q^{-1}][P] \to \Z[P]$ sending $q \mapsto 1$) in the notation of \cite{Lus85} (where the $P$ and $q$ of \cite{Lus85} are our $\Lambda$ and $t^2$, respectively) via the map $e^\lambda \mapsto -\lambda$ (see Example~\ref{eg:FGAs}\eqref{eg-item:FGA-mult}).  (The negative sign in front of the $\alpha_i$ arises from the twisting of the action of $\Z[q,q^{-1}][P]$ on itself by a sign in \cite[(8.2)]{Lus85}.)  Under this identification, the right hand side of~\eqref{eq:mult-Lusztig-Demazure-operator} corresponds to the Demazure-Lusztig operator \cite[(8.1)]{Lus85}, where the $T_s$ of \cite{Lus85} corresponds to our $tT_i$, where $s=s_i$ (see Remark~\ref{rem:Hecke-conventions}).  Therefore, the actions of $\bH_M$ and $\bH$ on $\Z_M \llbracket \Lambda \rrbracket_M \cong \Z[q,q^{-1}][P]^\wedge$ coincide.  The action of $\bH_M$ is faithful by Proposition~\ref{prop:Hecke-alg-embedding} and the action of $\bH$ is also known to be faithful (see, for example, \cite[Prop.~12.2(i)]{Ginzburg98} or note that the action of $\bH$ specializes to the standard action of $\Z[W] \ltimes \Z[\Lambda]$ on $\Z[\Lambda]$ when $q=1$).  Thus we have the first isomorphism of part~\eqref{prop-item:mult-FHA}.  The second follows by considering the subalgebra generated by the $T_i$.
\end{proof}

For other {\fgl}s our definition seems to give new algebras as the following examples indicate.

\begin{example}[Lorentz case]
  For the Lorentz \fgl\ $F_L$, we have $\mu_L(u)=1$, $\Theta_L=0$, and $\kappa=0$.  Since $\kappa_{i,j} = \beta$ (see~\eqref{eq:kappa-ij-Lorentz}), we have $\sigma_{ij} = 4 \beta x_\gamma^2$.  Thus the relations \eqref{thm-itm:T_i-affine-relation}--\eqref{thm-item:T_i-adjacent} of Theorem~\ref{thm:formal-Hecke-relations} become
  \begin{enumerate}
    \item $\varphi T_i - T_i s_i(\varphi) = 2x_\gamma \Delta_{\alpha_i}^L(\varphi)$ for all $\varphi \in R_F \llbracket \Lambda \rrbracket_F,\ i \in I$.
    \item $T_i^2 = 1$ for all $i \in I$,
    \item $T_i T_j = T_j T_i$ for all $i,j \in I$ such that $\langle \alpha_i^\vee, \alpha_j \rangle=0$,
    \item $T_i T_j T_i - T_j T_i T_j = 4 \beta x_\gamma^2 (T_i - T_j)$ for all $i,j \in I$ such that  $\langle \alpha_i^\vee, \alpha_j \rangle=-1$.
  \end{enumerate}
  These form a complete set of relations in the simply laced case.
\end{example}

\begin{example}[Elliptic case]
  For the elliptic \fgl\ $F_E$, we have
  \[ \ts
    \mu_E(u) = \frac{1}{1 - a_1u - a_3 v(u)},\quad \Theta_E = \frac{2\psi - \psi^2}{1-\psi} = \frac{\psi}{1-\psi} + \psi,
  \]
  where $\psi = a_1 x_\gamma + a_3 v(x_\gamma)$ (see Example~\ref{FGL:muexamples}\eqref{eg-item:elliptic-FGL}). If, for example, $a_3=0$, then
  \[ \ts
    \mu_E(u)=\frac{1}{1-a_1u},\quad \Theta_E = \frac{2a_1 x_\gamma - a_1^2 x_\gamma^2}{1 - a_1 x_\gamma},\quad \kappa_i=a_1\quad \text{for all } i \in I,
  \]
  and so
  \[ \ts
    T_i = \Delta^E_i \frac{2x_\gamma - a_1 x_\gamma^2}{1-a_1 x_\gamma} + \delta_i \frac{1}{1-a_1u} \quad \text{for all } i \in I.
  \]
  Furthermore, when $a_3=0$, we have
  \[ \ts
    \chi_i = \frac{\Theta_E}{x_i a_1}
  \]
  and so
  \[ \ts
    \sigma_{ij} = \frac{\Theta_E}{x_i a_1} \frac{\Theta_E}{x_j a_1} + \frac{\Theta_E}{x_{i+j}a_1} \left( \frac{\Theta_E}{x_{-i} a_1} - \frac{\Theta_E}{x_ja_1} \right) = - \frac{\Theta_E^2}{a_1^2} \kappa_{i,j}.
  \]
\end{example}

\begin{example}[Universal formal Hecke algebra]
  We call the formal Hecke algebra $H_{U}$ corresponding to the universal \fgl\ $F_U$ the \emph{universal formal Hecke algebra}.  Observe that $H_{U}$ is an algebra over $\LL_{U}$, where $\LL$ is the Lazard ring.  Note that in this case we have
  \begin{align*}
    \Theta_{U} &= -a_{11}(x_{\gamma}-x_{-\gamma})+a_{11}^2(x_{\gamma}^2-x_{-\gamma}^2) - (a_{11}^3+a_{12}a_{11}-a_{22}+2a_{13})(x_\gamma^3 - x_{-\gamma}^3)+ \dotsb \\
    &=-2a_{11}x_\gamma - 2(a_{11}^3 + a_{11}a_{12} - a_{22} + 2a_{13})x_\gamma^3+\dotsb.
  \end{align*}
\end{example}

\begin{theo} \label{thm:Hecke-alg-isom}
  Suppose $(R,F)$ and $(R,F')$ are {\fgl}s over the same ring $R$, with either $\kappa^F=0$ or $a_{11}$ invertible in $R$ (in the notation of~\eqref{equ-fgl}).  Then
  \[
    \bH_F \otimes_{R_F \llbracket \Lambda \rrbracket_F} R'_F \llbracket \Lambda \rrbracket_F \cong \bH_{F'} \otimes_{R_{F'} \llbracket \Lambda \rrbracket_{F'}} R'_{F'} \llbracket \Lambda \rrbracket_{F'}
  \]
  as algebras, where $R'_F = (R \otimes_\Z \Q)_F \otimes_\Q \Q[x_\gamma^{-1}]$ (and similarly, with $F$ replaced by $F'$).
\end{theo}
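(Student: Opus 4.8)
The plan is to reduce to the case $F' = F_A$ — the general statement then follows by transitivity and symmetry of ``$\cong$'' — and to transport the Demazure-algebra isomorphism of Theorem~\ref{thm:Demazure-algebra-isom} along the exponential. Recall that $R_F = R\llbracket\Gamma\rrbracket_F$ for $\Gamma$ free of rank one on $\gamma$, and that $R'_F$ is obtained from $R_\Q\llbracket\Gamma\rrbracket_F$ by inverting $x_\gamma$; in particular $x_\gamma$ and every positive integer are units in $R'_F$. Under our hypothesis (either $\kappa^F = 0$, or $a_{11}$ invertible, in which case each $\kappa^F_\alpha$ is already a unit of $R_F\llbracket\Lambda\rrbracket_F$) we have $R_F\llbracket\Lambda\rrbracket_F^\kappa = R_F\llbracket\Lambda\rrbracket_F$, so $\bH_F$ is the $R_F$-subalgebra of $Q_W'$ generated by $R_F\llbracket\Lambda\rrbracket_F$ and the $T_i^F$, and hence $\bH_F \otimes_{R_F\llbracket\Lambda\rrbracket_F} R'_F\llbracket\Lambda\rrbracket_F$ is generated over $R'_F\llbracket\Lambda\rrbracket_F$ by the $T_i^F$, $i \in I$.

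First I would set up the exponential isomorphism. Viewing $e_F \in R_\Q\llbracket u\rrbracket$ as a morphism of {\fgl}s $(R'_F, F_A) \to (R'_F, F)$, functoriality of formal group algebras gives a $W$-equivariant ring isomorphism $e_F^\star \colon R'_F\llbracket\Lambda\rrbracket_F \xrightarrow{\sim} R'_F\llbracket\Lambda\rrbracket_{F_A}$ with $x_\lambda \mapsto e_F(x_\lambda)$, which also restricts to an isomorphism $R_\Q\llbracket\Gamma\rrbracket_F \xrightarrow{\sim} R_\Q\llbracket\Gamma\rrbracket_{F_A}$ and thereby identifies $R'_F$ with $R'_{F_A}$. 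Exactly as in the proof of Theorem~\ref{thm:Demazure-algebra-isom}, since $e_F$ multiplies each $x_\lambda$ and $x_\gamma$ by a unit, $e_F^\star$ extends to an isomorphism of the localized twisted group algebras (inverting the $x_\lambda$ and, when $\kappa^F \ne 0$, the $\kappa^F_\alpha$) fixing each $\delta_w$. Using the explicit bases of Lemmas~\ref{lem:T_w-basis} and~\ref{lem:iDelta-basis}, the tensor products under consideration embed as subalgebras of these twisted group algebras, so $e_F^\star$ carries $\bH_F \otimes_{R_F\llbracket\Lambda\rrbracket_F} R'_F\llbracket\Lambda\rrbracket_F$ isomorphically onto the $R'_{F_A}\llbracket\Lambda\rrbracket_{F_A}$-subalgebra generated by the elements $e_F^\star(T_i^F)$.

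The crux is to identify this subalgebra. Using $\delta_i = 1 - x_i\iDelta_i^F$, together with the $s_i$-invariance (noted after Definition~\ref{def:formal-HA}) of $\tfrac{\Theta_F}{\kappa_i^F}$, $\mu_F(x_\gamma)$ and $x_\gamma$, which therefore commute with $\iDelta_i^F$, one rewrites $T_i^F$ as $\mu_F(x_\gamma) + \bigl(\tfrac{\Theta_F}{\kappa_i^F} - x_i\mu_F(x_\gamma)\bigr)\iDelta_i^F$ if $\kappa^F \ne 0$, and as $1 + (2x_\gamma - x_i)\iDelta_i^F$ if $\kappa^F = 0$. Applying $e_F^\star$ and the identity $e_F^\star(\iDelta_i^F) = \tfrac{x_i}{e_F(x_i)}\iDelta_i^{F_A}$ from the proof of Theorem~\ref{thm:Demazure-algebra-isom}, one obtains $e_F^\star(T_i^F) = \mu + c_i\,\iDelta_i^{F_A}$, where $\mu = e_F^\star(\mu_F(x_\gamma))$ (or $\mu = 1$) is a central unit of $R'_{F_A}$ and $c_i \in R'_{F_A}\llbracket\Lambda\rrbracket_{F_A}$. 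The key point is that $c_i$ is a unit: its augmentation in $R'_{F_A}$ is the image under $e_F^\star$ of the $\Lambda$-augmentation of the coefficient of $\iDelta_i^F$ above (times the unit $x_i/e_F(x_i)$), which — since the leading term of $\tfrac{\Theta_F}{\kappa_i^F}$ in $x_i$ and $x_\gamma$ is $2x_\gamma$, respectively since $2x_\gamma - x_i$ reduces to $2x_\gamma$ — equals $x_\gamma\bigl(2 + O(x_\gamma)\bigr)$, a unit of $R'_{F_A}$ because $2$ and $x_\gamma$ are invertible there. Since $c_i$ and $\mu$ are units, the relation $e_F^\star(T_i^F) = \mu + c_i\iDelta_i^{F_A}$ recovers $\iDelta_i^{F_A}$, hence also $\delta_i = 1 - x_i\iDelta_i^{F_A}$; therefore $e_F^\star\bigl(\bH_F \otimes_{R_F\llbracket\Lambda\rrbracket_F} R'_F\llbracket\Lambda\rrbracket_F\bigr)$ is precisely the $R'_{F_A}\llbracket\Lambda\rrbracket_{F_A}$-subalgebra generated by the $\iDelta_i^{F_A}$, $i \in I$.

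Finally, the same computation applied to $F_A$ itself — where $e_{F_A} = \mathrm{id}$, $\kappa^{F_A} = 0$, and $2x_\gamma - x_i$ is already a unit of $R'_{F_A}\llbracket\Lambda\rrbracket_{F_A}$ — shows that $\bH_{F_A} \otimes_{R_{F_A}\llbracket\Lambda\rrbracket_{F_A}} R'_{F_A}\llbracket\Lambda\rrbracket_{F_A}$ equals this same subalgebra. Hence $e_F^\star$ induces $\bH_F \otimes_{R_F\llbracket\Lambda\rrbracket_F} R'_F\llbracket\Lambda\rrbracket_F \cong \bH_{F_A} \otimes_{R_{F_A}\llbracket\Lambda\rrbracket_{F_A}} R'_{F_A}\llbracket\Lambda\rrbracket_{F_A}$, which proves the theorem for $F' = F_A$ and hence in general. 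The main obstacle I anticipate is the bookkeeping surrounding $e_F^\star$ — confirming that it descends to the $\kappa^F_\alpha$-localized twisted algebras and that the coefficient-ring base changes are injective — together with the invertibility of $c_i$, which rests on the precise leading term of $\tfrac{\Theta_F}{\kappa_i^F}$, the hypothesis on $a_{11}$, and the inversion of $x_\gamma$.
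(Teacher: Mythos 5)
Your proof is correct and follows essentially the same strategy as the paper: reduce to $F' = F_A$, transport along the exponential isomorphism $e_F^\star$ of twisted formal group algebras, and verify that the transported generators differ from the additive ones by units of $R'_A\llbracket\Lambda\rrbracket_A$ (the paper checks that $e_F^\star(\Xi_i^F)/\Xi_i^A$ resp.\ $e_F^\star(\rho_i)/\Xi_i^A$ is a unit via an explicit expansion using the series $\psi(x) = \tfrac{1-\mu_F(e_F(x))}{x}$, whereas you factor through $\iDelta_i^{F_A}$ and check that the coefficient $c_i$ has unit $\Lambda$-augmentation, an equivalent computation). The only cosmetic difference is that you invoke the stated leading-term fact for $\Theta_F/\kappa_i^F$ where the paper re-derives it inline; both are fine.
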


\begin{proof}
  It suffices to prove the result when $F'=F_A$.  Let $R_\Q = R \otimes_\Z \Q$.  As in the proof of Theorem~\ref{thm:Demazure-algebra-isom}, we have an isomorphism of twisted formal group algebras
  \[
    e_F^\star : Q_W^{(R'_F,F)} \to Q_W^{(R'_A,A)}.
  \]
  Since
  \[ \ts
    e_F^\star(x_\gamma^{-1}) = \frac{1}{e_F(x_\gamma)} = \frac{x_\gamma}{e_F(x_\gamma)} x_\gamma^{-1}
  \]
  and $\frac{x_\gamma}{e_F(x_\gamma)} \in (R_\Q)_A$ is invertible in $(R_\Q)_A$, we see that $e_F^\star (R'_F) = R'_A$ and so $e_F^\star (R_F' \llbracket \Lambda \rrbracket_F) = R_A' \llbracket \Lambda \rrbracket_A$.
  The algebra $\bH_F \otimes_{R_F \llbracket \Lambda \rrbracket_F} R'_F \llbracket \Lambda \rrbracket_F$ is isomorphic to its image $\bH' := e_F^\star (\bH_F \otimes_{R_F \llbracket \Lambda \rrbracket_F} R'_F \llbracket \Lambda \rrbracket_F)$ under $e_F^\star$.

  We first consider the case where $\kappa^F = 0$.  Then $\bH_F \otimes_{R_F \llbracket \Lambda \rrbracket_F} R'_F \llbracket \Lambda \rrbracket_F$ is generated over $R'_F \llbracket \Lambda \rrbracket_F$ by (the element 1 and) the elements
  \[
    T_i^F - 1 = 2 \Delta_i^F x_\gamma + \delta_i - 1 = \Xi_i^F (\delta_i-1),\quad i \in I,
  \]
  where
  \[ \ts
    \Xi_i^F = 1 - \frac{2x_\gamma}{x_i} \in Q_W^{(R_F,F)}.
  \]
  We see that
  \[ \ts
    e_F^\star(T_i^F-1) = e_F^\star(\Xi_i^F)(\delta_i - 1) = \frac{e_F^\star(\Xi_i^F)}{\Xi_i^A} (T_i^A-1).
  \]
  Thus it suffices to show that $e_F^\star(\Xi_i^F)/\Xi_i^A$ lies in $R'_A \llbracket \Lambda \rrbracket_A$ and is invertible in $R'_A \llbracket \Lambda \rrbracket_A$ (i.e.\ has invertible constant term).  Now,
  \begin{align*} \ts
    \frac{e_F^\star(\Xi_i^F)}{\Xi_i^A} &= \ts \left( 1 - \frac{2 e_F(x_\gamma)}{e_F(x_i)} \right) \left(1 - \frac{2x_\gamma}{x_i} \right)^{-1} \\
    &= \ts \left( \frac{x_i}{e_F(x_i)} \frac{e_F(x_\gamma)}{x_\gamma} - \frac{x_i}{2x_\gamma} \right) \left(1 - \frac{x_i}{2x_\gamma} \right)^{-1}.
  \end{align*}
  Note that $e_F(x_i)/x_i, e_F(x_\gamma)/x_\gamma \in R'_A \llbracket \Lambda \rrbracket_A$ are invertible in $R'_A \llbracket \Lambda \rrbracket_A$ (with constant term one).  Since $1-x_i/(2x_\gamma) \in R'_A \llbracket \Lambda \rrbracket_A$ is also clearly invertible in $R'_A \llbracket \Lambda \rrbracket_A$, we are done.

  Now consider the case where $\kappa^F \ne 0$ and $a_{11}$ is invertible in $R$ (hence in $R_A'$).  The elements
  \[
    T_i^F - \mu_F(x_\gamma) = \rho_i(\delta_i-1),\quad i \in I,
  \]
  where
  \[ \ts
    \rho_i = \mu_F(x_\gamma) - \frac{\Theta_F}{x_i \kappa_i^F}\in Q_W^{(R_F,F)}
  \]
  generate $\bH_F \otimes_{R_F \llbracket \Lambda \rrbracket_F} R'_F \llbracket \Lambda \rrbracket_F$ over $R'_F \llbracket \Lambda \rrbracket_F$ (along with the element 1).  Since
  \[ \ts
    e_F^\star (T_i^F - \mu_F(x_\gamma)) = \frac{e_F^\star(\rho_i)}{\Xi_i^A}(T_i^A-1),
  \]
  it follows as in the $\kappa^F=0$ case that it suffices to show that $e_F^\star(\rho_i)/\Xi_i^A$ lies in $R'_A \llbracket \Lambda \rrbracket_A$ and is invertible in $R'_A \llbracket \Lambda \rrbracket_A$ (i.e.\ has invertible constant term).

  For any $x\in R'_A \llbracket \Lambda \rrbracket_A$ we set
  \[ \ts
    \psi(x)=\frac{1-\mu_F(e_F(x))}{x}=a_{11}+O(1) \in R'_A \llbracket \Lambda \rrbracket_A
  \]
  so that $\mu_F(e_F(x))=1-x\psi(x)$. Then (as $x_{-\lambda}=-x_{\lambda}$ in $R_A \llbracket \Lambda \rrbracket_A$)
  \begin{align*} \ts
    \frac{e_F^\star(\rho_i)}{\Xi_i^A} &= \ts \left( \mu_F(e_F(x_\gamma)) - \frac{\mu_F(e_F(x_\gamma)) - \mu_F(e_F(-x_{\gamma}))}{1-\mu_F(e_F(x_{-i}))} \right) \left( 1 - \frac{2x_\gamma}{x_i} \right)^{-1} \\
    &= \ts \left(  1-x_\gamma \psi(x_\gamma) - \frac{x_\gamma}{x_i}\cdot      \frac{\psi(-x_\gamma) + \psi(x_\gamma)}{\psi(-x_i)}\right) \left( 1 - \frac{2x_\gamma}{x_i} \right)^{-1} \\
    &= \ts \left( \frac{\psi(-x_\gamma) + \psi(x_\gamma)}{\psi(-x_i)}+x_i\psi(x_\gamma) - \frac{x_i}{x_\gamma}\right) \left( 2 - \frac{x_i}{x_\gamma} \right)^{-1}.
  \end{align*}
  Since $a_{11}$ is invertible, we have
  \[ \ts
    \frac{\psi(-x_\gamma)+\psi(x_\gamma)}{\psi(-x_i)}=\frac{2a_{11}+O(1)}{a_{11}+O(1)}=2+O(1).
  \]

  Combining all of the above computations, we see that
  \[ \ts
    \frac{e_F^\star(\rho_i)}{\Xi_i^A} = 1 + O(1) \in R_A' \llbracket \Lambda \rrbracket_A
  \]
  is invertible in $R_A' \llbracket \Lambda \rrbracket_A$ as desired.
\end{proof}

\begin{rem}
  \begin{asparaenum}
    \item It is known that certain localizations or completions of the affine Hecke algebra and degenerate affine Hecke algebra are isomorphic (see \cite[Thm.~9.3]{Lus89} and \cite[\S3.1.7]{Rou08}).  Theorem~\ref{thm:Hecke-alg-isom} can be seen as an analogue of these results.

    \item Note that while Theorem~\ref{thm:Hecke-alg-isom} shows that all affine Hecke algebras (satisfying the hypotheses of the proposition) become isomorphic over appropriate rings, the isomorphism is \emph{not} the naive one sending $T_i^F$ to $T_i^{F'}$.  Furthermore, the completion (with respect to the augmentation map) is crucial.  No assertion is made regarding an isomorphism (even over $\Q$) of \emph{truncated} versions.  See Remark~\ref{rem:isom-Dem-alg}.
    \end{asparaenum}
\end{rem}

%%%%%%%%%%%%%%%%%%%%%%%%%%%%%%%%%%%%%%%%%%%%%%%%%%%%%%%%%%%%%%%%%%%
% References
%%%%%%%%%%%%%%%%%%%%%%%%%%%%%%%%%%%%%%%%%%%%%%%%%%%%%%%%%%%%%%%%%%%

\bibliographystyle{alpha}
\bibliography{HMSZ-biblist}

\begin{thebibliography}{CMHL02}

\bibitem[BE90]{BE90}
Paul Bressler and Sam Evens.
\newblock The {S}chubert calculus, braid relations, and generalized cohomology.
\newblock {\em Trans. Amer. Math. Soc.}, 317(2):799--811, 1990.

\bibitem[Bou81]{Bou81}
Nicolas Bourbaki.
\newblock {\em \'{E}l\'ements de math\'ematique}.
\newblock Masson, Paris, 1981.
\newblock Groupes et alg{\`e}bres de Lie. Chapitres 4, 5 et 6. [Lie groups and
  Lie algebras. Chapters 4, 5 and 6].

\bibitem[CG10]{CG10}
Neil Chriss and Victor Ginzburg.
\newblock {\em Representation theory and complex geometry}.
\newblock Modern Birkh\"auser Classics. Birkh\"auser Boston Inc., Boston, MA,
  2010.
\newblock Reprint of the 1997 edition.

\bibitem[CMHL02]{CMHL02}
Ivan Cherednik, Yavor Markov, Roger Howe, and George Lusztig.
\newblock {\em Iwahori-{H}ecke algebras and their representation theory},
  volume 1804 of {\em Lecture Notes in Mathematics}.
\newblock Springer-Verlag, Berlin, 2002.
\newblock Lectures from the C.I.M.E. Summer School held in Martina-Franca, June
  28--July 6, 1999, Edited by M. Welleda Baldoni and Dan Barbasch.

\bibitem[CPZ13]{CPZ09}
Baptiste Calm{\`e}s, Victor Petrov, and Kirill Zainoulline.
\newblock Invariants, torsion indices and oriented cohomology of complete
  flags.
\newblock {\em Ann. Sci. \'Ec. Norm. Sup\'er. (4)}, 46(3), 2013.
\newblock Preprint available at arXiv:0905.1341v2 [math.AG].

\bibitem[CZZ]{CZZ09}
Baptiste Calm\'es, Kirill Zainoulline, and Changlong Zhong.
\newblock A coproduct structure on the formal affine demazure algebra.
\newblock arXiv:arXiv:1209.1676 [math.RA].

\bibitem[Dem73]{Dem73}
Michel Demazure.
\newblock Invariants sym\'etriques entiers des groupes de {W}eyl et torsion.
\newblock {\em Invent. Math.}, 21:287--301, 1973.

\bibitem[Dem74]{dem-des}
Michel Demazure.
\newblock D\'esingularisation des vari\'et\'es de {S}chubert
  g\'en\'eralis\'ees.
\newblock {\em Ann. Sci. \'Ecole Norm. Sup. (4)}, 7:53--88, 1974.
\newblock Collection of articles dedicated to Henri Cartan on the occasion of
  his 70th birthday, I.

\bibitem[EB87]{EvBr87}
Sam Evens and Paul Bressler.
\newblock On certain {H}ecke rings.
\newblock {\em Proc. Nat. Acad. Sci. U.S.A.}, 84(3):624--625, 1987.

\bibitem[Fr{\"o}68]{Fro68}
Albrecht Fr{\"o}hlich.
\newblock {\em Formal groups}.
\newblock Lecture Notes in Mathematics, No. 74. Springer-Verlag, Berlin, 1968.

\bibitem[Gin]{Ginzburg98}
Victor Ginzburg.
\newblock Geometric methods in the representation theory of {H}ecke algebras
  and quantum groups (notes by {V}.~{B}aranovsky).
\newblock arXiv:math/9802004v3 [math.AG].

\bibitem[GKV]{GKV95}
Victor Ginzburg, Mikhail Kapranov, and Eric Vasserot.
\newblock Elliptic algebras and equivariant elliptic cohomology.
\newblock arXiv:q-alg/9505012.

\bibitem[GKV97]{GKV97}
Victor Ginzburg, Mikhail Kapranov, and Eric Vasserot.
\newblock Residue construction of {H}ecke algebras.
\newblock {\em Adv. Math.}, 128(1):1--19, 1997.

\bibitem[GZ12]{GZ12}
Stefan Gille and Kirill Zainoulline.
\newblock Equivariant pretheories and invariants of torsors.
\newblock {\em Transform. Groups}, 17(2):471--498, 2012.

\bibitem[Hum90]{Hum90}
James~E. Humphreys.
\newblock {\em Reflection groups and {C}oxeter groups}, volume~29 of {\em
  Cambridge Studies in Advanced Mathematics}.
\newblock Cambridge University Press, Cambridge, 1990.

\bibitem[Kac90]{Kac90}
Victor~G. Kac.
\newblock {\em Infinite-dimensional {L}ie algebras}.
\newblock Cambridge University Press, Cambridge, third edition, 1990.

\bibitem[KK86]{KoKu86}
Bertram Kostant and Shrawan Kumar.
\newblock The nil {H}ecke ring and cohomology of {$G/P$} for a {K}ac-{M}oody
  group {$G$}.
\newblock {\em Adv. in Math.}, 62(3):187--237, 1986.

\bibitem[Kle05]{Kle05}
Alexander Kleshchev.
\newblock {\em Linear and projective representations of symmetric groups},
  volume 163 of {\em Cambridge Tracts in Mathematics}.
\newblock Cambridge University Press, Cambridge, 2005.

\bibitem[Lan87]{Lan87}
Serge Lang.
\newblock {\em Elliptic functions}, volume 112 of {\em Graduate Texts in
  Mathematics}.
\newblock Springer-Verlag, New York, second edition, 1987.
\newblock With an appendix by J. Tate.

\bibitem[LM07]{LM07}
Mark Levine and Fabien Morel.
\newblock {\em Algebraic cobordism}.
\newblock Springer Monographs in Mathematics. Springer, Berlin, 2007.

\bibitem[Lus85]{Lus85}
George Lusztig.
\newblock Equivariant {$K$}-theory and representations of {H}ecke algebras.
\newblock {\em Proc. Amer. Math. Soc.}, 94(2):337--342, 1985.

\bibitem[Lus89]{Lus89}
George Lusztig.
\newblock Affine {H}ecke algebras and their graded version.
\newblock {\em J. Amer. Math. Soc.}, 2(3):599--635, 1989.

\bibitem[Pan03]{Pan03}
Ivan Panin.
\newblock Oriented cohomology theories of algebraic varieties.
\newblock {\em $K$-Theory}, 30(3):265--314, 2003.
\newblock Special issue in honor of Hyman Bass on his seventieth birthday. Part
  III.

\bibitem[PR99]{PiRa99}
Harsh Pittie and Arun Ram.
\newblock A {P}ieri-{C}hevalley formula in the {$K$}-theory of a
  {$G/B$}-bundle.
\newblock {\em Electron. Res. Announc. Amer. Math. Soc.}, 5:102--107, 1999.

\bibitem[Rou]{Rou08}
Rapha\"el Rouquier.
\newblock 2-{K}ac-{M}oody algebras.
\newblock arXiv:math/0812.5023v1 [math.RT].

\bibitem[Sil09]{Sil09}
Joseph~H. Silverman.
\newblock {\em The arithmetic of elliptic curves}, volume 106 of {\em Graduate
  Texts in Mathematics}.
\newblock Springer, Dordrecht, second edition, 2009.

\bibitem[Tat74]{Tate}
John~T. Tate.
\newblock The arithmetic of elliptic curves.
\newblock {\em Invent. Math.}, 23:179--206, 1974.

\end{thebibliography}

\end{document}